\newfont{\cyr}{wncyr10} %this gives us cyrillic fonts
\theoremstyle{plain}
\newtheorem{theorem}{Theorem}[section]
\newtheorem{Thm}{Theorem}
\newtheorem{lemma}[theorem]{Lemma}
\newtheorem{proposition}[theorem]{Proposition}
\newtheorem{corollary}[theorem]{Corollary}
\newtheorem{question}[theorem]{Question} 
\newtheorem*{proposition*}{Proposition}
\theoremstyle{definition}
\newtheorem{remark}[theorem]{Remark}
\newtheorem{definition}[theorem]{Definition}
\newtheorem{example}[theorem]{Example}
\numberwithin{equation}{section}
\newcommand{\bA}{{\mathbf A}}
\newcommand{\fA}{{\mathfrak A}}
\newcommand{\fa}{{\mathfrak a}}
\newcommand{\fb}{{\mathfrak b}}
\newcommand{\C}{{\mathcal C}}
\newcommand{\cC}{{\mathcal C}}
\newcommand{\bC}{{\mathbf C}}
\newcommand{\fc}{{\mathfrak c}}
\newcommand{\cD}{{\mathcal D}}
\newcommand{\bF}{{\mathbf F}}
\newcommand{\ff}{{\mathfrak f}}
\newcommand{\fF}{{\mathfrak F}}
\newcommand{\hG}{{\widehat G}}
\newcommand{\cH}{{\mathcal{H}}}
\newcommand{\1}{{\mathbf 1}}  % `boldface one'.
\newcommand{\cM}{{\mathcal M}}
\newcommand{\cP}{{\mathcal P}}
\newcommand{\fp}{{\mathfrak p}}
\newcommand{\fP}{{\mathfrak P}}
\newcommand{\bQ}{{\mathbf Q}}
\newcommand{\cR}{{\mathcal R}}
\newcommand{\br}{{\mathbf r}}
\newcommand{\cW}{{\mathcal W}}
\newcommand{\bZ}{{\mathbf Z}}
\DeclareMathOperator{\Cl}{Cl}
\DeclareMathOperator{\co}{co}
\DeclareMathOperator{\disc}{disc}
\DeclareMathOperator{\full}{full}
\DeclareMathOperator{\Gal}{Gal}
\DeclareMathOperator{\Hom}{Hom}
\DeclareMathOperator{\Image}{Im}
\DeclareMathOperator{\Ker}{Ker}
\DeclareMathOperator{\Map}{Map}
\DeclareMathOperator{\rag}{rag}
\DeclareMathOperator{\ram}{ram}
\begin{document}
\title[On counting rings of integers]{On counting rings of
  integers as Galois modules}
\author{A. Agboola}
\date{Final version. August 16, 2010.}
\address{Department of Mathematics \\
University of California \\ Santa Barbara, CA 93106. }
\email{agboola@math.ucsb.edu}
%\subjclass[2000]{11G05, 11R23, 11G16}
\thanks{Partially supported by NSF and NSA grants.}
%\keywords{Rubin, $p$-adic $L$-function, Birch and Swinnerton-Dyer
%  conjecture, elliptic curve, Mazur-Tate-Teitelbaum.}
\begin{abstract}
Let $K$ be a number field and $G$ a finite abelian group. We study the
asymptotic behaviour of the number of tamely ramified $G$-extensions
of $K$ with ring of integers of fixed realisable class as a Galois
module. 
\end{abstract}

\maketitle
%%%%%%%%%%%%%%%%%%%%%%%%%%%%%%%%%%%%%%%%%%%%%%%%%%%%%%%%%%%%%%%%%%%

\section{Introduction}

Suppose that $K$ is a number field with ring of integers $O_K$, and
let $G$ be a fixed, finite group. If $K_h/K$ is a tamely ramified
Galois algebra with Galois group $G$, then a classical theorem of
E. Noether implies that the ring of integers $O_h$ of $K_h$ is a
locally free $O_KG$-module. It therefore determines a class $(O_h)$ in
the locally free class group $\Cl(O_KG)$ of $O_KG$. We say that a
class $c \in \Cl(O_KG)$ is \textit{realisable} if $c = (O_h)$ for some
tamely ramified $G$-algebra $K_h/K$, and we write $\cR(O_KG)$ for the
set of realisable classes in $\Cl(O_KG)$. These classes are natural
objects of study, and they arise, for instance, in the context of
obtaining explicit analogues of known Adams-Riemann-Roch theorems for
locally free class groups (see e.g. \cite[\S 4]{AB} and the references
cited there; see also the work of B. K\"ock (\cite{K1}, \cite{K2}) on
this and related topics). We also remark that the problem of
describing $\cR(O_KG)$ for arbitrary finite groups $G$ may be viewed
as being a Galois module theoretic analogue of the inverse Galois
problem for finite groups.

When $G$ is abelian, Leon McCulloh has obtained a complete description
of $\cR(O_KG)$ in terms of certain Stickelberger homomorphisms on
classgroups (see \cite{Mc}). In particular, he has shown that
$\cR(O_KG)$ is in fact a group. Suppose now that $c \in \cR(O_KG)$,
and write $N_{\disc}(c,X)$ for the number of tame $G$-extensions
$K_h/K$ for which $(O_h)=c$ and $\disc(K_h/\bQ) \leq X$, where
$\disc(K_h/\bQ)$ denotes the absolute value of the discriminant of
$K_h/\bQ$. The following very natural counting problem appears to have
received surprisingly little attention.

\begin{question} 
What can be said about $N_{\disc}(c,X)$ as $X \to \infty$?  For example,
if $M_{\disc}(X)$ denotes the number of tame $G$-extensions $K_h/K$ for
which $\disc(K_h/\bQ) \leq X$, is 
\begin{equation*}
\lim_{X \to \infty}
\frac{N_{\disc}(c,X)}{M_{\disc}(X)}
\end{equation*} 
independent of the realiasable class $c$?
\end{question}

The only previous results concerning this question of which the author
is aware are those contained in the unpublished University of Illinois
Ph.D. thesis of Kurt Foster (see \cite{Fo}). Foster considers the case
in which $G$ is an elementary abelian $l$-group for some prime
$l$. Using earlier work of McCulloh on realisable classes for
elementary abelian groups (see \cite{Mc1}), he proves the following
result.

\begin{Thm}(K. Foster) \label{T:foster}
Suppose that $G$ is an elementary abelian
  $l$-group. Then 
\begin{equation*}
N_{\disc}(c,X) \sim \beta \cdot Y \cdot(\log Y)^{r-1} 
\end{equation*}
as $X \to \infty$, where

$\bullet$ $Y^{\phi(|G|)} (\disc(K/\bQ))^{|G|} = X$ (here $\phi$ denotes
  the Euler $\phi$-function);

$\bullet$ $\beta$ is a positive constant that depends upon $K$ and
  $G$, but not on $c$;

$\bullet$ $r$ is a positive integer that depends only upon $K$ and $G$.
\end{Thm}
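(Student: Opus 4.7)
The plan is to attack the count via harmonic analysis on the finite group $\cR(O_K G)$ (which is a group by McCulloh's theorem), translating $N_{\disc}(c,X)$ into an Euler product and extracting the asymptotic via a Tauberian theorem. Character orthogonality gives
\begin{equation*}
N_{\disc}(c, X) = \frac{1}{|\cR(O_K G)|} \sum_{\chi \in \widehat{\cR(O_K G)}} \overline{\chi(c)} \cdot N_{\chi}(X), \qquad N_{\chi}(X) := \sum_{\substack{K_h/K \text{ tame} \\ \disc(K_h/\bQ) \leq X}} \chi((O_h)).
\end{equation*}
The main term will come from the trivial character $\chi = 1$, whose contribution is manifestly independent of $c$; the real work lies in showing that each non-trivial $\chi$ contributes to strictly lower order.

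First I would make the discriminant condition explicit. Since $G = (\bZ/l)^n$, the inertia at any tamely ramified prime is either trivial or cyclic of order $l$, so the conductor--discriminant formula gives $\disc(K_h/K) = \prod_{\mathfrak{p} \in S} \mathfrak{p}^{|G|(l-1)/l}$, where $S$ denotes the set of ramified primes. Hence
\begin{equation*}
\disc(K_h/\bQ) = \disc(K/\bQ)^{|G|} \cdot \Bigl( \prod_{\mathfrak{p} \in S} N_{K/\bQ} \mathfrak{p} \Bigr)^{\phi(|G|)},
\end{equation*}
which is exactly Foster's normalisation $Y^{\phi(|G|)} \disc(K/\bQ)^{|G|} = X$ with $Y = \prod_{\mathfrak{p} \in S} N\mathfrak{p}$. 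Using class field theory --- or Kummer theory after adjoining $\mu_l$, followed by a descent --- I would parametrize tame $G$-extensions ramified exactly at $S$ by suitable surjections from an appropriate ray class group onto $G$, obtaining an explicit local count at each squarefree modulus $\mathfrak{m} = \prod_{\mathfrak{p} \in S} \mathfrak{p}$.

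Next I would feed in McCulloh's explicit Stickelberger description of $(O_h)$ for elementary abelian $G$ (\cite{Mc1}) to convert $\chi((O_h))$ into a product of local factors indexed by the ramified primes. This collapses $N_{\chi}(X)$ into a sum over squarefree ideals $\mathfrak{m} \subset O_K$ with $N\mathfrak{m} \leq Y$ of a multiplicative function $a_{\chi}$, and the associated Dirichlet series factors as
\begin{equation*}
L_{\chi}(s) = \sum_{\mathfrak{m} \text{ squarefree}} \frac{a_{\chi}(\mathfrak{m})}{N\mathfrak{m}^{s}} = \prod_{\mathfrak{p}} \bigl( 1 + a_{\chi}(\mathfrak{p}) N\mathfrak{p}^{-s} \bigr).
\end{equation*}
Comparing this Euler product with products of Hecke $L$-functions over a ray class group that incorporates both the conductor constraints and the splitting of $\mathfrak{p}$ modulo $l$ then determines its meromorphic behaviour to the right of $\Re(s) = 1$, and Delange's Tauberian theorem delivers $N_{\chi}(X) \sim \beta_{\chi} \, Y (\log Y)^{r_{\chi} - 1}$ with $r_{\chi}$ equal to the order of the pole of $L_{\chi}(s)$ at $s = 1$.

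The hard part is showing $r_{\chi} < r := r_1$ for each non-trivial $\chi$, so that only the trivial character contributes to the leading asymptotic. This reduces to proving genuine cancellation when the local factors $a_{\chi}(\mathfrak{p})$ are averaged over primes $\mathfrak{p}$, which in turn is a Chebotarev-type equidistribution statement in an auxiliary extension carrying the Stickelberger data attached to $\chi$ together with the $l$-th power residue conditions encoded by McCulloh's formula. Once this cancellation is established, only the $\chi = 1$ term survives at leading order, giving $\beta = \beta_{1}/|\cR(O_K G)|$ depending only on $K$ and $G$ and the asymptotic of the stated shape.
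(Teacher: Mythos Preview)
Your outline is sound in spirit---character orthogonality, Euler product, Tauberian theorem, and showing that non-trivial characters contribute to strictly lower order---and this is indeed the skeleton of the paper's argument. But the paper carries it out on a different (and more convenient) group, and this changes what the ``hard part'' looks like.

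Rather than doing harmonic analysis on $\cR(O_KG)$, the paper passes via McCulloh's $F$-elements to the modified ray class group $\Cl'_{\cM}(\Lambda)$ of the algebra $K\Lambda = \Map_{\Omega_K}(G(-1),K^c)$. This group splits along the Wedderburn components $K(t)$ as a product of ordinary ray class groups, so each character factors as $\chi=(\chi_t)_{t \in T'}$ and the associated Dirichlet series $D(s,\chi)$ is, up to a holomorphic correction, the product $\prod_{t\in T'} L_t(\cW(t)s, \chi_t)$ of standard Hecke $L$-functions. The crucial observation is then purely structural: for an elementary abelian $l$-group every non-identity $t$ has order $l$, so the discriminant weight $\cW_{\disc}(t) = (|t|-1)|G|/|t| = \phi(|G|)$ is \emph{constant} on $T'$. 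All the factors $L_t$ therefore have their potential pole at the common point $s=1/\phi(|G|)$, and for $\chi \neq \1$ some component $\chi_{t_0}\neq\1$ contributes an entire factor there, dropping the pole order strictly below $|T'|$. No separate Chebotarev-type cancellation argument is needed; the inequality $r_\chi < r$ is just the non-vanishing of Hecke $L$-functions at $s=1$.

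Your route via $\cR(O_KG)$ could be made to work, but the step ``convert $\chi((O_h))$ into a product of local factors indexed by the ramified primes'' hides exactly the passage to $\Cl'_{\cM}(\Lambda)$: the local factor at $\mathfrak p$ depends not merely on whether $\mathfrak p$ ramifies but on \emph{which} inertia element $s\in G_{(q_{\mathfrak p}-1)}$ is attached to it (this is the $F$-element data), and organising this is precisely what the Wedderburn decomposition does for you. Once that is unwound, your anticipated Chebotarev input collapses to the same Hecke $L$-function statement, so the paper's framework is really a cleaner packaging of the argument you have in mind rather than a different one.
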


Hence, when $G$ is an elementary abelian group, then asymptotically
$N_{\disc}(c,X)$ is independent of $c$, and so we see that the tame
$G$-extensions of $K$ are equidistributed amongst the realisable
classes as $X \to \infty$.

Let us say a few words about the main ideas involved in the proof of
Theorem \ref{T:foster}. One begins by considering the series
\begin{equation} \label{E:dfoster}
\sum_{ \substack{ K_{h}/K\, \text{tame}, \\
\Gal(K_{h}/K) \simeq G \\
(O_{h}) = c} }
\disc(K_h/\bQ)^{-s}, \qquad s \in \bC.
\end{equation}
Of course it is not a priori clear that this series converges
anywhere; one establishes convergence in some right-hand half-plane by
showing that it may be written as an Euler product over rational
primes. The series may therefore be written in the form
$\sum_{n=1}^{\infty} a_n n^{-s}$. One deduces from this that in
general, the series will have finitely many poles (whose locations may
be determined), and that the number $N_{\disc}(c,X)$ is equal to
$\sum_{n \leq X} a_n$. This last quantity may then be estimated by
using a suitable version of the D\'elange-Ikehara Tauberian theorem.

Our goal in this paper is to investigate similar counting problems
when $G$ is an arbitrary finite abelian group. We shall do this by
combining Foster's approach with later work of McCulloh (see
\cite{Mc}) on realisable classes for arbitrary finite abelian groups.

A special case of our main result (see Theorem \ref{T:main}) may be
described as follows. Let $G$ be an arbitrary finite abelian
group. For any tame $G$-extension $K_h/K$, let $\cD(K_h/K)$ denote the
absolute norm of the product of the primes of $K$ that ramify in
$K_h/K$. If $c \in \cR(O_KG)$, then we write $N_{\cD}(c,X)$ for the
number of tame $G$-extensions $K_h/K$ such that $(O_h) = c$,
$\cD(K_h/K) \leq X$, and $K_h/K$ is unramified at all places dividing
$|G|$. The following result shows that asymptotically, $N_{\cD}(c,X)$
is independent of $c$.

\begin{Thm} \label{T:B}
With notation and hypotheses as above, we have
\begin{equation*}
N_{\cD}(c,X) \sim \beta_1 \cdot X \cdot (\log X)^{r_1-1},
\end{equation*}
as $X \to \infty$. Here $\beta_1$ is a constant depending only upon
$K$ and $G$, but not upon $c$, and $r_1$ is a positive integer that
depends only upon $K$ and $G$. 
%is the same positive integer
%that occurs in the statement of Theorem \ref{T:foster}.
\end{Thm}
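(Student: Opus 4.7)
\textbf{Plan for the proof of Theorem \ref{T:B}.} I would follow the strategy outlined above for Foster's theorem, using McCulloh's full description of $\cR(O_KG)$ for arbitrary finite abelian $G$ in \cite{Mc} in place of his earlier elementary abelian results. Form the Dirichlet series
$$L(c,s) = \sum_{K_h/K} \cD(K_h/K)^{-s},$$
the sum ranging over tame $G$-extensions of $K$ with $(O_h) = c$ that are unramified at all places dividing $|G|$. To handle the fixed-class condition, apply orthogonality on the finite group $\cR(O_KG)$ to write
$$L(c,s) = \frac{1}{|\cR(O_KG)|}\sum_{\chi \in \widehat{\cR(O_KG)}} \overline{\chi(c)}\, L_\chi(s), \qquad L_\chi(s) := \sum_{K_h/K} \chi((O_h))\, \cD(K_h/K)^{-s}.$$

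The first substantive step is to express each $L_\chi$ as an Euler product over primes of $K$ coprime to $|G|$. Since $\cD$ is manifestly multiplicative in the ramified primes, and since a tame $G$-extension unramified outside $|G|$ is determined up to finite ambiguity by its collection of local inertia/Frobenius data, the essential point is to show that $\chi((O_h))$ also factors multiplicatively over these primes. This is precisely what McCulloh's Stickelberger-transpose description of $\cR(O_KG)$ provides: the realisable class attached to $K_h$ decomposes as a sum (in $\Cl(O_KG)$) of purely local contributions, one for each ramified prime, depending only on the local tame ramification type. Combining this decomposition with the values of $\chi$ yields
$$L_\chi(s) = C_\chi \prod_{\fp \nmid |G|} \bigl(1 + a_{\fp,\chi}(N\fp)^{-s} + O((N\fp)^{-2s})\bigr),$$
with explicit coefficients $a_{\fp,\chi}$, and the product converges in some right half-plane by standard estimates.

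The analytic heart of the argument is then the pole structure at $s = 1$. For the trivial character $\chi_0$, the coefficients $a_{\fp,\chi_0}$ are non-negative integers of controlled asymptotic size, and comparison with a finite product of shifts of $\zeta_K(s)$ shows that $L_{\chi_0}$ extends meromorphically to a neighbourhood of $\Re(s) \geq 1$ with a pole of some order $r_1 \geq 1$ at $s=1$, depending only on $K$ and $G$. The main obstacle is the case $\chi \neq \chi_0$: one must show that the twisted Euler product has pole order \emph{strictly} less than $r_1$ at $s=1$. This will require a character-cancellation argument exploiting the structure of the Stickelberger transpose -- summing $\chi$ over the local tame ramification data at each $\fp \nmid |G|$ must exhibit genuine cancellation, in the spirit of the nonvanishing of twisted $L$-series used in Chebotarev-type arguments. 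Once this is secured, only $\chi_0$ contributes to the leading singularity, and writing $L(c,s) = \sum_n a_n(c)n^{-s}$ and invoking the Delange-Ikehara Tauberian theorem yields
$$N_\cD(c,X) = \sum_{n \leq X} a_n(c) \sim \beta_1 \cdot X \cdot (\log X)^{r_1 - 1},$$
with $\beta_1 = \bigl[|\cR(O_KG)|(r_1-1)!\bigr]^{-1}\lim_{s \to 1^+}(s-1)^{r_1}L_{\chi_0}(s)$, which is manifestly independent of $c$.
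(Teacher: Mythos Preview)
Your overall architecture (Dirichlet series, character orthogonality, Euler product, Tauberian theorem) matches the paper's, but there is a genuine gap at the step where you claim multiplicativity of $\chi((O_h))$.

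McCulloh's description does \emph{not} say that $(O_h)$ is a sum of local contributions depending only on ramification type. In the paper's notation (see Theorem~\ref{T:leon1} and Remark~\ref{R:newleon}) one has $\rho((O_h)) = \psi(h)^{-1}\theta(f)$, where only $\theta(f)$ is built from the ramified primes; the factor $\psi(h)$ is a global invariant encoding the unramified part of $K_h$. Two tame extensions with identical local ramification data can therefore land in different classes of $\cR(O_KG)$, so $h \mapsto \chi((O_h))$ is not multiplicative in the ramified primes, and your $L_\chi$ will not be an Euler product as written.

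The paper circumvents this by changing the group on which orthogonality is applied. Rather than using characters of $\cR(O_KG)$, it first shows (Proposition~\ref{P:maps}, Proposition~\ref{P:pprop}, \eqref{E:countfunc}) that counting $h$ with $(O_h)=c$ is, up to the constant factor $|\Ker(\psi)|$, the same as counting $f\in\bF$ lying in a fixed coset $\lambda_c\cP_\theta$ of $J(K\Lambda)$. One then passes to the finer cosets modulo $\cP_{\cM}$ and works with characters of the modified ray class group $\Cl'_{\cM}(\Lambda)$, which is a product of honest ray class groups of number fields $K(t)$. For \emph{this} group the twisted sums $D(s,\chi)$ genuinely have Euler products (Proposition~\ref{P:euler}), and they factor as products of standard Dirichlet $L$-functions $L_t(s,\chi_t)$ of the fields $K(t)$, up to a holomorphic correction (Proposition~\ref{P:comp2}).

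There is also a point you have inverted in difficulty. You flag the $\chi\neq\chi_0$ pole-order drop as ``the main obstacle'' requiring a delicate cancellation argument. In fact, for Theorem~\ref{T:B} this is the \emph{easy} part: since $\cD$ corresponds to the weight $\cW_{\ram}$ which is identically $1$ on $T'$ (Example~\ref{E:exweight}(2)), every factor $L_t(s,\chi_t)$ has its only pole at $s=1$; as soon as some $\chi_t\neq\1$, that factor is entire and the pole order drops automatically (Corollary~\ref{C:ind}(b), Example~\ref{E:bisi}). The constancy of the weight is precisely what singles out $\cD$ and makes Theorem~\ref{T:B} go through, whereas for the discriminant weight $\cW_{\disc}$ the analogous statement likely fails (Section~\ref{S:sigh}).
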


For arbitrary finite abelian $G$, our results concerning
$N_{\disc}(c,X)$ are unfortunately not as precise (see
\eqref{E:countfunc} and Section \ref{S:sigh}). The results that we
obtain indicate that it is very unlikely that the analogue of Foster's
equidistribution result holds in general, although at present we are
unable to prove this. This fact, namely that when tame $G$-extensions
of $K$ are counted by discriminant, then in general, they are probably
not equidistributed amongst the realisable classes, was rather
surprising to us. It is interesting to compare the results of this
paper with recent work of Melanie Wood on a quite different type of
counting problem (see \cite{Wood}). Wood studies the probabilities of
various local completions of a random $G$-extension of $K$. She proves
that these probabilities are well-behaved and are--for the most
part--independent when $G$-extensions of $K$ are counted by conductor;
as she points out, this is in close analogy with Chebotarev's density
theorem. When $G$-extensions of $K$ are counted by discriminant
however, she proves that these probabilities are poorly behaved and in
general are not independent. It would be interesting to obtain a
better understanding of the relationship, if any, between the results
described in the present paper and those of \cite{Wood}.

An outline of the contents of this paper is as follows. In Section
\ref{S:leon} we review McCulloh's theory of realisable classes. In
Section \ref{S:count}, we use the methods of \cite{Fo} to set up a
counting problem that will enable us to analyse the distribution of
tame $G$-extensions of $K$ amongst realisable classes. In Sections
\ref{S:euler} and \ref{S:asymptotics} we study analogues of the series
\eqref{E:dfoster} in our setting. We show that they are Euler
products, and we apply a Tauberian theorem in order to state a result
concerning their asymptotic behaviour. In Section \ref{S:dirichlet} we
introduce certain Dirichlet $L$-series; these are then used in Section
\ref{S:poles} to determine the location of the poles of the series
introduced in Section \ref{S:euler}. We state our main result in
Section \ref{S:equi}, and we explain how it may be used to recover
Theorem \ref{T:foster} and to prove Theorem \ref{T:B}. In Section
\ref{S:field}, we discuss why, in many cases, it makes no difference
if we count tamely ramified Galois field extensions of $K$ with Galois
group $G$, rather that tamely ramified $G$-algebra extensions of
$K$. Finally, in Section \ref{S:sigh} we explain why our results
indicate that the analogue of Foster's equidistribution result
probably does not hold in general, and we discuss what would need to
be done to produce an explicit counterexample.
\medskip

\noindent{}{\bf Acknowledgements.} It will be clear to the reader that
this paper owes a great deal to the work of L. McCulloh and
K. Foster. I am very grateful to Leon McCulloh for sending me a copy
of Foster's thesis. I would also like to thank Jordan Ellenberg for
his interest, Melanie Wood for sending me a copy of her paper
\cite{Wood}, and the anonymous referee for many
extremely helpful comments.
\medskip

\noindent{}{\bf Notation and conventions.} 
If $L$ is a number field, we write $O_L$ for its ring of integers. We
set $\Omega_L:= \Gal(L^c/L)$, where $L^c$ denotes an algebraic closure
of $L$, and we write $I(O_L)$ for the group of fractional ideals of
$L$.
\smallskip

The symbol $G$ will always denote a finite, abelian group. If $H$ is
any group, we write $\widehat{H}$ for the group of characters of $H$,
and $\1_H$ (or simply $\1$ if there is no danger of confusion) for the
trivial character in $\widehat{H}$.
\smallskip

We identify $G$-Galois algebras of $K$ with elements of $H^1(K,G)
\simeq \Hom(\Omega_K, G)$ (see \ref{SS:resolvends} below). If $h \in
H^1(K,G)$, then we write $K_h/K$ for the corresponding $G$-extension
of $K$, and $O_h$ for the integral closure of $O_K$ in $O_h$. We write
$H^{1}_{tr}(K,G)$ for the subgroup of $H^1(K,G)$ consisting of those
$h \in H^1(K,G)$ for which $K_h/K$ is tamely ramified.
\smallskip

If $L/K$ is any finite extension, then $N_{L/K}$ denotes the norm map
from $L$ to $K$.
\medskip

%%%%%%%%%%%%%%%%%%%%%%%%%%%%%%%%%%%%%%%%%%%%%%%%%%%%%%%%%%%%%%%%%%%%%%%%%%
\section{Review of McCulloh's theory of realisable classes}
\label{S:leon}

In this section we shall briefly describe McCulloh's theory of
realisable classes of tame extensions. The reader is strongly
encouraged to consult McCulloh's paper \cite{Mc} for full details. 

\subsection{Locally free class groups} \cite[Section 3]{Mc}. 
In this subsection we shall recall some basic facts concerning the
Picard group $\Cl(O_KG)$ of $O_KG$.

Let $J(KG)$ denote the group of finite ideles of $KG$, i.e. the
restricted direct product of the groups $(K_vG)^{\times}$ with respect
to the subgroups $(O_{K,v}G)^{\times}$. Then there is a natural
isomorphism
\begin{equation} \label{E:isoclass}
\Cl(O_KG) \simeq \frac{J(KG)}{\left( \prod_v(O_{K,v}G)^{\times}
  \right) \left(KG\right)^{\times}}.
\end{equation}
Suppose that $K_{h}/K$ is a tamely ramified Galois algebra with
$\Gal(K_{h}/K) \simeq G$. Then by Noether's theorem, the ring of
integers $O_{h}$ of $K_{h}$ is a locally free $O_KG$-module of
rank one. Let $b \in K_{h}$ be a $KG$-generator of $K_{h}$, and,
for each finite place $v$ of $K$, choose an $O_{K,v}G$-generator $a_v$
of $O_{h,v}$. We refer to $b$ as a \textit{normal basis generator}
and to $a_v$ as a \textit{normal integral basis generator}. Then there
exists $c_v \in (K_vG)^{\times}$ such that $a_v = c_vb$. It may be
shown that $c=(c_v)_v \in J(KG)$. The idele $c$ is a representative of
$(O_{h}) \in \Cl(O_KG)$.

Now let
\begin{equation*}
j: J(KG) \to \Cl(O_KG)
\end{equation*}
denote the surjective homomorphism afforded by the isomorphism
\eqref{E:isoclass}, and suppose that $c$ is any idele in
$J(KG)$. How can we tell whether or not the class $j(c)$ is
realisable? In order to describe the answer to this question, we need
to introduce some further ideas and notation.

\subsection{Resolvends} \label{SS:resolvends} \cite[Section 1]{Mc}.
If $h: \Omega_K \to G$ is any continuous homomorphism, then we may
define an associated $G$-Galois $K$-algebra $K_h$ by
\begin{equation*}
K_h:= \Map_{\Omega_K}({^hG}, K^c),
\end{equation*}
where ${^hG}$ denotes the set $G$ endowed with an action of
$\Omega_K$ via the homomorphism $h$, and $K_h$ is the algebra of
$K^c$-valued functions on $G$ that are fixed under the action of
$\Omega_K$. The group $G$ acts on $K_h$ via the rule
\begin{equation*}
a^s(t) = a(ts)
\end{equation*}
for all $s, t \in G$. It may be shown that every $G$-Galois
$K$-algebra is isomorphic to an algebra of the form $K_h$ for some
$h$. Every $G$-Galois $K$-algebra may therefore be viewed as lying in
the $K^c$-algebra $\Map(G,K^c)$. It is therefore natural to consider
the Fourier transforms of elements of $\Map(G,K^c)$. These arise via
the \textit{resolvend} map
\begin{equation*}
\br_G: \Map(G,K^c) \to K^cG; \qquad a \mapsto \sum_{s \in G} a(s)
s^{-1}.
\end{equation*}
The map $\br_G$ is an isomorphism of left $K^cG$-modules, but not of
algebras, because it does not preserve multiplication. It is not hard
to show that for any $a \in \Map(G, K^c)$, we have that $a \in K_h$ if
and only if $\br_G(a)^{\omega} = \br_G(a)h(\omega)$ for all $\omega
\in \Omega_K$ (where here $\Omega_K$ acts on $K^cG$ via its
action on the coefficients). It may also be shown that an element $a
\in K_h$ generates $K_h$ as a $KG$-module if and only if $\br_G(a) \in
(K^cG)^{\times}$. Two elements $a_1, a_2 \in \Map(G,K^c)$ with
$\br_G(a_1), \br_G(a_2) \in (K^cG)^{\times}$ generate the same
$G$-Galois $K$-algebra as a $KG$-module if and only if $\br_G(a_1) =
b \cdot \br_G(a_2)$ for some $b \in (K^cG)^{\times}$.

We define
\begin{align*}
&H(KG):= \left\{ \alpha \in (K^cG)^{\times}:
  \alpha^{\omega}/\alpha \in G \quad \forall \omega \in \Omega_K
  \right\}; \\
&\cH(KG):= H(KG)/G.
\end{align*}
The group $H(KG)$ consists precisely of resolvends of normal basis
generators of $G$-Galois $K$-algebras lying in $\Map(G, K^c)$. The
group $\cH(KG)$ may be naturally identified with the set of all normal
basis generators of $G$-Galois $K$-algebras lying in $\Map(G,K^c)$.

For each finite place $v$ of $K$, we define $\cH(K_vG)$ and
$\cH(O_{K,v}G)$ analogously. We write $\cH(\bA(KG))$ for the restricted
direct product of the groups $\cH(K_vG)$ with respect to the groups
$\cH(O_{K,v}G)$. Then the natural maps
\begin{equation*}
(K_vG)^{\times} \to \cH(K_vG)
\end{equation*}
induce a homomorphism
\begin{equation*}
\rag: J(KG) \to \cH(\bA(KG)).
\end{equation*}

McCulloh shows that if $c \in J(KG)$, then $j(c) \in \Cl(O_KG)$ is
realisable if and only if $\rag(c)$ admits a certain local
decomposition. This local decomposition involves certain Stickelberger
maps that we shall now describe.

\subsection{Stickelberger maps} \cite[Section 4]{Mc}.
Let $\hG$ denote the group of complex-valued characters of $G$, and
write $G(-1)$ for the group $G$ endowed with a $\Omega_K$-action
via the inverse cyclotomic character. There is a natural pairing
\begin{equation*}
\langle \, ,\,\rangle: \bQ \hG \times \bQ G \to \bQ
\end{equation*}
defined by 
\begin{equation*}
\chi(g) = \exp(2 \pi i \langle \chi,g \rangle), \qquad 0 \leq \langle
\chi, g \rangle < 1
\end{equation*}
for $\chi \in \hG$ and $g \in G$. This pairing may in turn be used to
define a \textit{Stickelberger map}
\begin{equation*}
\Theta: \bQ \hG \to \bQ G; \qquad \alpha \mapsto \sum_{g \in G}
\langle \alpha, g \rangle g.
\end{equation*}

Let $A_{\hG}$ denote the kernel of the determinant map
\begin{equation*}
\det: \bZ \hG \to \hG; \qquad \sum_{\chi \in \hG} a_{\chi} \chi
\mapsto \prod_{\chi \in \hG} \chi^{a_{\chi}}.
\end{equation*}
Then the standard isomorphism
\begin{equation*}
(K^cG)^{\times} \simeq \Hom(\bZ \hG, (K^c)^{\times})
\end{equation*}
induces an isomorphism
\begin{equation*}
(K^cG)^{\times}/G \simeq \Hom(A_{\hG}, (K^c)^{\times}).
\end{equation*}

\begin{proposition} \label{P:AG} 
(McCulloh) If $\alpha \in \bZ \hG$,
then $\Theta(\alpha) \in \bZ G$ if and only if $\alpha \in A_{\hG}$.
\end{proposition}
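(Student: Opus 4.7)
The plan is to unwind the definitions and observe that exponentiation converts the Stickelberger pairing into evaluation of characters, which immediately links integrality of $\Theta(\alpha)$ to the vanishing of $\det(\alpha)$.

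First I would write a general $\alpha \in \bZ\hG$ as $\alpha = \sum_{\chi \in \hG} a_\chi \chi$ with $a_\chi \in \bZ$, so that by linearity of the pairing
\begin{equation*}
\langle \alpha, g \rangle = \sum_{\chi \in \hG} a_\chi \langle \chi, g \rangle \in \bQ
\end{equation*}
for every $g \in G$. The condition $\Theta(\alpha) \in \bZ G$ is then simply that $\langle \alpha, g \rangle \in \bZ$ for each $g \in G$, since the $g$ form a $\bQ$-basis of $\bQ G$.

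Next I would apply $\exp(2\pi i \cdot -)$ to translate this integrality condition into a multiplicative one. Using the defining relation $\chi(g) = \exp(2\pi i \langle \chi, g\rangle)$, we get
\begin{equation*}
\exp\bigl(2 \pi i \langle \alpha, g\rangle \bigr) = \prod_{\chi \in \hG} \exp\bigl(2 \pi i \langle \chi, g\rangle\bigr)^{a_\chi} = \prod_{\chi \in \hG} \chi(g)^{a_\chi} = (\det \alpha)(g).
\end{equation*}
Hence $\langle \alpha, g \rangle \in \bZ$ if and only if $(\det \alpha)(g) = 1$, and this must hold for every $g \in G$. Since a character of $G$ is trivial iff it evaluates to $1$ on every element, this is equivalent to $\det(\alpha) = \1$, i.e.\ $\alpha \in A_{\hG} = \ker(\det)$.

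There is essentially no obstacle here; the whole argument is a short manipulation of definitions. The only point one has to be slightly careful about is that although each term $\langle \chi, g \rangle$ lies in $[0,1)$, the sum $\sum_\chi a_\chi \langle \chi, g \rangle$ need not, so one cannot directly use the normalisation of the individual pairings — one must pass through the exponential to get a condition that is insensitive to adding integers. Once that is observed, the equivalence falls out immediately.
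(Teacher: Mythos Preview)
Your argument is correct. The paper itself does not give a proof of this proposition; it simply refers the reader to \cite[Proposition 4.3]{Mc}. Your direct unwinding of the definitions via $\exp(2\pi i\,\cdot\,)$ is the natural route and is essentially the argument one finds in McCulloh's paper, so there is nothing to compare.
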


\begin{proof} 
See \cite[Proposition 4.3]{Mc}.
\end{proof}

Proposition \ref{P:AG} implies that, via restriction, $\Theta$ defines
a homomorphism (which we denote by the same symbol)
\begin{equation*}
\Theta: A_{\hG} \to \bZ G.
\end{equation*}
Dualising this homomorphism, and twisting by the inverse cyclotomic
character yields an $\Omega_K$-equivariant \textit{transpose
Stickelberger homomorphism}
\begin{equation*}
\Theta^t: \Hom (\bZ G(-1), (K^c))^{\times}) \to \Hom(A_{\hG},
(K^c)^{\times}) \simeq (K^cG)^{\times}/G.
\end{equation*}

Now set
\begin{align*}
&\Lambda:= \Hom_{\Omega_K}(\bZ G(-1), O_{K^c}) =
  \Map_{\Omega_K}(G(-1),O_{K^c}); \\
& K\Lambda:= \Hom_{\Omega_K}(\bZ G(-1), K^c) =
  \Map_{\Omega_K}(G(-1),K^c).
\end{align*}
Then $\Theta^t$ above induces a homomorphism
\begin{equation*}
\Theta^t: (K\Lambda)^{\times} \to [(K^cG)^{\times}/G]^{\Omega_K} =
\cH(KG).
\end{equation*}
For each finite place $v$ of $K$, we can apply the discussion above
with $K$ replaced by $K_v$ to obtain a local version
\begin{equation} \label{E:locstick}
\Theta_{v}^{t}: (K_v\Lambda_v)^{\times} \to \cH(K_vG)
\end{equation}
of the map $\Theta^t$. The homomorphism $\Theta^t$ commutes with local
completion.

For all places $v$ of $K$ not dividing the order of $G$, it may be
shown that $\Theta^t(\Lambda_v) \subseteq \cH(O_{K,v}G)$. Hence if we
write $J(K\Lambda)$ for the restricted direct product of the groups
$(K_v\Lambda_v)^{\times}$ with respect to the groups
$\Lambda_{v}^{\times}$, then the homomorphisms $\Theta^{t}_{v}$ combine to
yield an idelic transpose Stickelberger homomorphism
\begin{equation} \label{E:idstick}
\Theta^t: J(K\Lambda) \to \cH(\bA(KG)).
\end{equation}

\subsection{Prime $F$-elements} \cite[Section 5]{Mc} 
Let $v$ be a finite place of $K$, and write $q_v$ for the order of the
residue field at $v$. Fix a local uniformiser $\pi_v$ of $K$ at
$v$. Write $G_{(q_v-1)}$ for the subgroup of $G$ consisting of all
elements in $G$ of order dividing $q_v-1$.

For each element $s \in G_{(q_v-1)}$, define $f_{v,s} \in
(K_v\Lambda_v)^{\times} = \Map(G(-1),
(K_{v}^{c})^{\times})^{\Omega_K}$ by
\begin{equation}
f_{v,s}(t) =
\begin{cases}
\pi_v, &\text{if $t=s \neq 1$;} \\ 1, &\text{otherwise.}
\end{cases}
\end{equation}
Note in particular that $f_{v,1} = 1$.

Write
\begin{equation*}
\bF_v:= \{f_{v,s}\, \mid \, s \in G_{(q_v-1)}\}.
\end{equation*}
The non-trivial elements of $\bF_v$ are called \textit{the prime
  $F$-elements lying above $v$}. We define $\bF \subset J(K\Lambda)$ by
\begin{equation*}
f \in \bF \iff \text{$f \in J(K\Lambda)$ and $f_v \in \bF_v$ for
all $v$.}
\end{equation*}
In other words, each non-trivial element of $\bF$ is a finite product of
prime $F$-elements lying over distinct places $v$ of $K$; in
particular, if $f \in \bF$, then $f_v =1$ for almost all $v$.
\smallskip

We can now state two results of McCulloh. The first result (see
\cite[Theorem 6.7]{Mc}) characterises tame $G$-extensions of $K$ in
terms of resolvends of normal basis generators. The second (see
\cite[Theorem 6.17]{Mc}) gives a precise characterisation of those
ideles $c \in J(KG)$ for which $j(c) \in \Cl(O_KG)$ is realisable.

Set
\begin{equation*}
\cH(\bA(O_KG)):= \prod_v \cH(O_{K,v}G).
\end{equation*}

\begin{theorem} \label{T:leon1}(McCulloh)
Suppose that $c \in J(KG)$. Then $j(c) = (O_h)$ for some tamely
ramified $G$-Galois algebra extension $K_h/K$ (i.e. $j(c)$ is
realisable) if and only if there exist $b \in \cH(KG)$, $f \in \bF$
and $u \in \cH(\bA(O_KG))$ such that
\begin{equation*}
\rag(c) = b^{-1}\cdot \Theta^t(f) \cdot u \in \cH(\bA(KG)).
\end{equation*}
The elements $b \in \cH(KG)$ and $f \in \bF$ are uniquely
determined by $c$. Furthermore, $K_h/K$ is ramified at precisely those
places $v$ of $K$ for which $f_v \neq 1$.
\end{theorem}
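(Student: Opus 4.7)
The plan is to prove the biconditional by a local-to-global analysis, treating the two directions separately and reading off uniqueness from the same arguments. For the forward direction, suppose $K_h/K$ is a tame $G$-Galois algebra with $(O_h) = j(c)$. Fix a normal basis generator $b_0 \in K_h$ and normal integral basis generators $a_v \in O_{h,v}$ at each finite place $v$; writing $a_v = c_v b_0$ produces an idele $(c_v) \in J(KG)$ representing $j(c)$. Taking resolvends and reducing modulo $G$, the element $b := \br_G(b_0) \bmod G \in \cH(KG)$ satisfies $\rag(c)_v = b^{-1}\cdot(\br_G(a_v) \bmod G)$ in $\cH(K_vG)$, so the forward direction reduces to showing that $\br_G(a_v) \bmod G$ has the shape $\Theta_v^t(f_v) \cdot u_v$ with $f_v \in \bF_v$ and $u_v \in \cH(O_{K,v}G)$.

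The heart of the argument, and the main obstacle, is this local computation at tamely ramified places. By local class field theory, the tamely ramified continuous homomorphisms $\Omega_{K_v} \to G$ are parametrised (up to an unramified twist) by an element $s \in G_{(q_v-1)}$, namely the image of a generator of tame inertia. For each such $s$ one writes down the corresponding tame extension (generated by a root of $X^{|s|} - \pi_v$) together with an explicit normal integral basis generator, and computes its resolvend directly, using the defining identity $\chi(g) = \exp(2\pi i \langle \chi, g \rangle)$ to evaluate characters against the Stickelberger element. The computation matches this resolvend, modulo $\cH(O_{K,v}G)$, with $\Theta_v^t(f_{v,s})$. At unramified $v$ the analogous computation yields $f_v = 1$ and $\br_G(a_v) \bmod G \in \cH(O_{K,v}G)$. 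Aggregating over $v$ produces the global decomposition $\rag(c) = b^{-1} \Theta^t(f) u$, with the ramification locus of $K_h/K$ equal to the support of $f$ by construction.

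For the reverse direction, given a decomposition $\rag(c) = b^{-1} \Theta^t(f) u$, I would lift $b$ to $\beta \in H(KG)$; by the resolvend criterion recalled in Section \ref{SS:resolvends}, $\beta$ is the resolvend of a normal basis generator of some $G$-Galois $K$-algebra $K_h$. The factor $\Theta_v^t(f_v)$ prescribes precisely the tame ramification data at $v$, while $u_v \in \cH(O_{K,v}G)$ arranges for integrality, so reversing the local computation shows $(O_h) = j(c)$ and that $K_h/K$ is tamely ramified exactly at the support of $f$. Uniqueness of $b$ and $f$ then follows by comparing any two decompositions of $\rag(c)$: the $\Theta^t$-image of $\bF$ has a rigid local structure (one prime $F$-element per place, pinned down by the chosen uniformiser), and neither a global resolvend nor an everywhere-integral adelic resolvend can contribute the local ramification imposed by $\Theta^t(f)$, which forces equality of the $f$-components; the residual identity then determines $b$ in $\cH(KG)$.
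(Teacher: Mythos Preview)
The paper does not give its own proof of this theorem; it simply records the statement and cites McCulloh \cite[Theorem 6.7]{Mc}. Your proposal is a faithful outline of McCulloh's actual argument and is correct in its broad shape: the core is indeed the local calculation (McCulloh's Proposition 5.4) identifying, at a tamely ramified place $v$, the resolvend of a local normal integral basis generator with $\Theta_v^t(f_{v,s})$ times an element of $\cH(O_{K,v}G)$, together with the fact that elements of $H(O_{K,v}G)$ are precisely resolvends of normal integral basis generators of unramified local $G$-extensions. Since there is no proof in the present paper to compare against, there is nothing further to add.
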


\begin{theorem} \label{T:leon2}(McCulloh)
Suppose that $c \in J(KG)$. Then $j(c) \in \Cl(O_KG)$ is realisable if
and only if $\rag(c) \in \cH(KG) \cdot \cH(\bA(O_KG)) \cdot
\Theta^t(J(K\Lambda))$. 
\end{theorem}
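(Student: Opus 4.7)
The forward direction is immediate from Theorem \ref{T:leon1}: any decomposition $\rag(c) = b^{-1} \Theta^t(f) u$ produced by that theorem, with $f \in \bF$, a fortiori places $\rag(c)$ in the stated coset, since $\bF \subset J(K\Lambda)$. The substantive content is the converse. Suppose $\rag(c) = b^{-1} \Theta^t(g) u$ with $b \in \cH(KG)$, $g \in J(K\Lambda)$, and $u \in \cH(\bA(O_KG))$. The plan is to rewrite this so that $g$ is replaced by a prime $F$-element, after which Theorem \ref{T:leon1} applies directly. Specifically, I would establish a factorization
\[
g = \lambda \cdot f \cdot \nu
\]
in $J(K\Lambda)$, with $\lambda \in (K\Lambda)^\times$, $f \in \bF$, and $\Theta^t(\nu) \in \cH(\bA(O_KG))$. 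Granted this, we obtain
\[
\rag(c) = \bigl(b \cdot \Theta^t(\lambda)^{-1}\bigr)^{-1} \cdot \Theta^t(f) \cdot \bigl(u \cdot \Theta^t(\nu)\bigr),
\]
exactly the shape required by Theorem \ref{T:leon1}, since $\Theta^t(\lambda) \in \cH(KG)$.

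To produce the factorization, I work place by place via the decomposition $K_v \Lambda_v = \prod_O K_{v,O}$ indexed by $\Omega_{K_v}$-orbits $O \subset G(-1)$, under which the prime $F$-elements $f_{v,s}$ are precisely the uniformizers at singleton orbits $\{s\}$ with $s \in G_{(q_v-1)}$. At a place $v \nmid |G|$, uniformizer components of $g_v$ at non-singleton orbits have image in $\cH(O_{K,v}G)$ by the containment $\Theta^t(\Lambda_v) \subseteq \cH(O_{K,v}G)$ recalled in Section \ref{S:leon}, and so can be pushed into $\nu_v$; the singleton-orbit components are integer powers of local uniformizers, which a weak-approximation argument in the number-field components of $K\Lambda$ globalises into a single $\lambda \in (K\Lambda)^\times$ normalising them into the form dictated by a chosen $f \in \bF$.

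The main obstacle is the finite set of places $v \mid |G|$, at which $\Theta^t(\Lambda_v)$ is not in general contained in $\cH(O_{K,v}G)$, so that unit contributions to $\nu_v$ cannot be freely absorbed into $u$. I would address these by exploiting the finiteness of such places to choose $\lambda$ so as to force $g_v$ into a well-controlled local subgroup there, invoking the explicit description of the image of $\Theta^t$ at tame inertia worked out in \cite[\S 5--6]{Mc}. This is the point where I expect the argument to be most delicate, and where one genuinely relies on McCulloh's deeper local analysis rather than on abstract formal manipulation.
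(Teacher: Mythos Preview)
The paper does not itself prove Theorem~\ref{T:leon2}; it is stated as McCulloh's result with a citation to \cite[Theorem~6.17]{Mc}. So there is no in-paper proof to compare against. That said, your reduction strategy---use Theorem~\ref{T:leon1} for the forward direction, and for the converse factorise $g = \lambda \cdot f \cdot \nu$ so as to land back in the hypotheses of Theorem~\ref{T:leon1}---is exactly the shape of McCulloh's argument.

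Your local analysis, however, has a genuine gap. You write that uniformiser components of $g_v$ at non-singleton orbits map into $\cH(O_{K,v}G)$ ``by the containment $\Theta^t(\Lambda_v) \subseteq \cH(O_{K,v}G)$.'' But that containment (as stated in Section~\ref{S:leon}) concerns $\Lambda_v^\times$, i.e.\ \emph{units} of $\Lambda_v$; a uniformiser at a non-trivial Wedderburn component is not a unit in $\Lambda_v$, and there is no reason its image under $\Theta^t$ should be integral. So this step does not go through as written, and the weak-approximation sketch that follows is built on it.

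The correct mechanism is the one McCulloh actually uses, and which this paper reviews later in Definition~\ref{D:modray} and Proposition~\ref{P:finiteness}: one introduces the congruence subgroup $U'_{\cM}(\Lambda)$ for a modulus $\cM$ divisible by $|G|$ and $\exp(G)^2$, and invokes \cite[Proposition~6.9]{Mc} to get $\Theta^t(U'_{\cM}(\Lambda)) \subseteq \cH(\bA(O_KG))$. This is what absorbs the bad places uniformly (including those dividing $|G|$, which you rightly flagged as delicate). The remaining task is then to show that $\bF$ meets every class in the finite group $\Cl'_{\cM}(\Lambda) = J(K\Lambda)/((K\Lambda)^\times \cdot U'_{\cM}(\Lambda))$; this follows from the description of $\fF$ in Proposition~\ref{P:fideals} together with the density of degree-one primes in ray classes, not from weak approximation as you suggest. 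With those two ingredients in place, the factorisation $g = \lambda f \nu$ with $\nu \in U'_{\cM}(\Lambda)$ exists and your concluding display is valid.
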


%%%%%%%%%%%%%%%%%%%%%%%%%%%%%%%%%%%%%%%%%%%%%%%%%%%%%%%%%%%%%%%%%%%%%%%%%%%
\section{A counting problem} \label{S:count}

In this section we shall explain how to set up a counting problem that
will enable us to study the distribution of tame $G$-extensions of $K$
amongst realisable classes.  We apply a modified version of a method
described by Foster in \cite[Chapters II and III]{Fo}.

Set
\begin{equation} \label{E:C}
\cC(O_KG) := \frac{\cH(\bA(KG))}{[(KG)^{\times}/G] \cdot \cH(\bA(O_KG))}.
\end{equation}

\begin{definition} \label{D:psi}
We define a homomorphism
\begin{equation}
\psi: H^1(K,G) \to \cC(O_KG)
\end{equation}
as follows. Let $K_h/K$ be the Galois $G$-extension of $K$
corresponding to $h \in H^1(K,G)$, and let $b \in K_h$ be any normal
basis generator. We define $\psi(h)$ to be the image of $h$ under the
composition of maps
\begin{equation*}
H^1(K,G) \to \frac{H(KG)}{(KG)^{\times}} \to \cC(O_K G),
\end{equation*}
where the first arrow is given by $h \mapsto [\br_G(b)]$, and the second
arrow is induced by the diagonal embedding
\begin{equation*}
H(KG) \to \prod_v H(K_vG).
\end{equation*}
It is not hard to check that $\psi(h)$ is independent of the choice of
$b$, and that $\psi$ is a homomorphism.
\end{definition}

\begin{definition} \label{D:rho}
We define
\begin{equation*}
\rho: \Cl(O_KG) \simeq \frac{J(KG)}{(KG^{\times}) \cdot
\prod_v(O_{K,v}G)^{\times}} \to \cC(O_KG)
\end{equation*}
to be the homomorphism induced by the composition of maps
\begin{equation*}
J(KG) \to H(\bA(KG)) \to \cH(\bA(KG)).
\end{equation*}
Here the first arrow is the diagonal embedding, and the second map is the
obvious quotient homomorphism.
\end{definition}

\begin{definition} \label{D:theta}
We define
\begin{equation*}
\theta: J(K\Lambda) \to \cC(O_KG)
\end{equation*}
to be the composition
\begin{equation*}
J(K\Lambda) \xrightarrow{\Theta^t} \cH(\bA(KG)) \to \cC(O_KG),
\end{equation*}
where the second arrow denotes the natural quotient map.
\end{definition}

\begin{proposition} \label{P:maps}
(a) We have that $h \in \Ker(\psi)$ if and only if $K_h/K$ is
  unramified at all finite places of $K$ and $O_h$ is $O_KG$-free. In
  particular, $\Ker(\psi)$ is finite.

(b) The homomorphism $\rho$ is injective.

(c) The map $\theta|_{\bF}$ is injective.
\end{proposition}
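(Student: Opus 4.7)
The plan is to handle the three parts separately by unwinding the definitions of $\psi$, $\rho$, and $\theta$ and invoking McCulloh's structural results from Section~\ref{S:leon}.

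For (a), I start from the assumption $\psi(h) = 0$ and pull back the definition: there exist $\beta \in (KG)^{\times}$ and $u = (u_v) \in \cH(\bA(O_KG))$ such that $\br_G(b) = \beta \cdot u$ in $\cH(\bA(KG))$ (with $\br_G(b) \in H(KG)$ embedded diagonally and reduced mod $G$ at each place). Because $\beta$ acts invertibly on normal basis generators, I replace $b$ by $\beta^{-1} b$, reducing to the condition $\br_G(b) \in H(O_{K,v}G)$ at every finite $v$ (using $G \subseteq H(O_{K,v}G)$ to absorb the residual $g_v \in G$ that appear when lifting from $\cH$ back to $H$). Under the convention that $\cH(O_{K,v}G)$ parametrises resolvends of normal integral basis generators of \emph{unramified} $G$-Galois $O_{K,v}$-algebras, this forces $K_{h,v}/K_v$ to be unramified at every finite $v$ and $b$ to generate $O_{h,v}$ over $O_{K,v}G$ for every $v$. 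The local-global principle then gives $O_h = b \cdot O_KG$, so $O_h$ is $O_KG$-free. The converse runs this argument in reverse. Finiteness of $\Ker(\psi)$ is then immediate: it injects into the set of $h \in \Hom(\Omega_K, G)$ corresponding to extensions unramified at all finite places, which by class field theory is controlled by a finite abelian quotient of $\Omega_K$ (a narrow ray class group) and hence finite.

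For (b), I suppose $\rho([c]) = 0$ for $c \in J(KG)$ and show $[c] = 0$ in $\Cl(O_KG)$. The vanishing condition supplies $\beta \in (KG)^{\times}$, $(u_v) \in H(\bA(O_KG))$, and $g_v \in G$ with $c_v = \beta \cdot u_v \cdot g_v$ in $H(K_vG)$ at every $v$. Then $c_v \beta^{-1} g_v^{-1}$ lies in $(K_vG)^{\times} \cap H(O_{K,v}G)$, and the key point is that under the unramified convention this intersection equals $(O_{K,v}G)^{\times}$: an element that is a unit in the unramified local group ring and whose coefficients are $\Omega_{K,v}$-fixed must have coefficients in $O_{K,v}$, and the same argument applied to its inverse shows it is a unit in $O_{K,v}G$. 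Therefore $c \in \beta \cdot \prod_v (O_{K,v}G)^{\times}$, so $[c] = 0$ in $\Cl(O_KG)$.

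For (c), the strategy is to exploit the uniqueness of the $\bF$-datum provided by Theorem~\ref{T:leon1}. Given $f_1, f_2 \in \bF$ with $\theta(f_1) = \theta(f_2)$, the equality in $\cC(O_KG)$ lifts to a relation $\Theta^t(f_1) = \beta \cdot u \cdot \Theta^t(f_2)$ in $\cH(\bA(KG))$ for some $\beta \in \cH(KG)$ (coming from $(KG)^{\times}/G$) and $u \in \cH(\bA(O_KG))$. Invoking McCulloh's construction of tame realisations (each $f \in \bF$ arises as the $\bF$-component of the decomposition of $\rag(c)$ for some realisable $c$, as established in his proof that $\cR(O_KG)$ is a group), I choose $c_1 \in J(KG)$ with $\rag(c_1) = b_1^{-1} \cdot \Theta^t(f_1) \cdot u_1$. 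Substituting the relation above produces a second valid decomposition $\rag(c_1) = (\beta^{-1} b_1)^{-1} \cdot \Theta^t(f_2) \cdot (u \cdot u_1)$, and the uniqueness of the $\bF$-component in Theorem~\ref{T:leon1} forces $f_1 = f_2$. The principal obstacle is justifying the existence of such a realisable $c_1$; once this input from \cite{Mc} is in hand, the remainder is a direct application of McCulloh's uniqueness statement.
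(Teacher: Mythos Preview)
Your arguments for (a) and (b) are correct and follow essentially the same route as the paper: replace the normal basis generator to push $\br_G(b)$ into $H(\bA(O_KG))$ and then cite McCulloh for the equivalence with ``unramified plus $O_KG$-free''; and for (b) reduce to the local identity $(K_vG)^{\times}\cap H(O_{K,v}G)=(O_{K,v}G)^{\times}$, which the paper phrases globally as $J(KG)\cap H(\bA(O_KG))=\prod_v(O_{K,v}G)^{\times}$.

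For (c) your approach genuinely diverges from the paper's. The paper argues locally: it invokes \cite[Proposition~5.4]{Mc} to see that $\Theta^t|_{\bF_v}$ is injective at every place, then checks directly that $\Theta^t(f_v)\notin \cH(O_{K,v}G)$ whenever $f_v\neq 1$, and from these two facts deduces that $\Theta^t(\bF)$ meets $[(KG)^{\times}/G]\cdot\cH(\bA(O_KG))$ only in the identity, giving injectivity of $\theta|_{\bF}$. You instead bootstrap from the uniqueness clause of Theorem~\ref{T:leon1}: after realising $f_1$ as the $\bF$-component of some $\rag(c_1)$, the relation $\Theta^t(f_1)=\beta\cdot u\cdot\Theta^t(f_2)$ produces a second admissible decomposition of $\rag(c_1)$, and uniqueness forces $f_1=f_2$. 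This is valid, and slicker once one has McCulloh's full apparatus, but it costs the extra input (which you correctly flag) that every $f\in\bF$ actually arises from some tame extension---a surjectivity statement buried in McCulloh's construction. The paper's route avoids this by staying local and elementary, needing only the valuation-theoretic fact that $\Theta^t(f_{v,s})$ is non-integral for $s\neq 1$.
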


\begin{proof}
(a) Suppose that $h \in \Ker(\psi)$, with $K_h =KG\cdot b$. Then it
  follows from the definition of $\psi$ that the image of $\br_G(b)$
  under the diagonal embedding $H(KG) \to \prod_v H(K_vG)$ lies in
  $(KG)^{\times} \cdot H(\bA(O_KG))$. Hence, replacing $b$ by $\alpha
  \cdot b$ for a suitably chosen element $\alpha \in (KG)^{\times}$,
  we may in fact assume that the image of $\br_G(b)$ under this
  diagonal embedding lies in $H(\bA(O_KG))$. This happens if and only
  if $K_h/K$ is unramified (see \cite[(2.12) and (2.13)]{Mc}) and
  $O_h$ is $O_KG$-free (see \cite[Theorem 5.6]{Mc}).

(b) It follows directly from the definitions of $J(KG)$ and
  $H(\bA(O_KG))$ that in $\prod_v H(K_vG)$, we have
\begin{equation*}
J(KG) \cap H(\bA(O_KG)) = \prod_v (O_{K,v}G)^{\times}.
\end{equation*}
The injectivity of $\rho$ is now a direct consequence of \eqref{E:C},
\eqref{E:isoclass}, and the definition of $\rho$.

(c) We first recall from the definition of $\bF$ that if $f = (f_v) \in \bF$, then $f_v=1$ for
almost all $v$. The proof of \cite[Proposition 5.4]{Mc} shows that for
each finite place $v$ of $K$, and $s_1, s_2 \in G_{q_v-1}$, we have
$\Theta^t(f_{v,s_{1}}) = \Theta^t(f_{v,s_{2}})$ if and only if
$s_1=s_2$. It follows that the restriction of the homomorphism
$\Theta^t$ to $\bF$ is injective.

Next we note that if $f_v \neq 1$, then plainly $f_v \notin
\Lambda^{\times}_{v}$, and it is not hard to check that $\Theta^t(f_v)
\notin H(O_{K,v}G)$. We deduce that, in $\prod_v H(K_vG)$, we have
\begin{equation*}
\Theta^t(\bF) \cap [ (KG)^{\times} \cdot H(\bA(KG))] = \{ 1\},
\end{equation*}
and this in turn implies that the restriction of the quotient map
$\cH(\bA(KG)) \to \cC(O_KG)$ to $\Theta^t(\bF)$ is injective. It
follows that the restriction of $\theta$ to $\bF$ is injective, as claimed.
\end{proof}

\begin{remark} \label{R:newleon}
(1) Suppose that $h \in H^{1}_{tr}(K,G)$. Then Theorem \ref{T:leon1}
    implies that there exists a unique $c \in \Cl(O_KG)$ (namely
    $(O_h)$) and a unique $f \in \bF$ such that
\begin{equation} \label{E:realisable}
\rho(c) = \psi(h)^{-1} \theta(f).
\end{equation}
For fixed $c \in \cR(O_KG)$ and fixed $f \in \bF$, Proposition
\ref{P:maps}(a) implies that if \eqref{E:realisable} is satisfied by
some $h \in H^{1}_{tr}(K,G)$, then in fact there are exactly $|\Ker(\psi)|$
elements $h \in H^{1}_{tr}(K,G)$ which satisfy \eqref{E:realisable}.

(2) Theorem \ref{T:leon2} implies that we have
\begin{equation*}
\rho(\cR(O_KG)) = \Image(\rho) \cap [\Image(\theta) \cdot
  \Image(\psi)].
\end{equation*}
\qed
\end{remark}

\begin{definition} \label{D:pdef}
We define
\begin{equation*}
\cP_{\theta}:= \left\{ x \in J(K\Lambda) \mid \theta(x) \in
\Image(\psi) \right\}.
\end{equation*}
\end{definition}

\begin{proposition} \label{P:pprop}
Suppose that $c \in \Cl(O_KG)$ with
\begin{equation*}
\rho(c) = \psi(h)^{-1} \theta(\lambda)
\end{equation*}
for some $h \in H^{1}(K,G)$ and $\lambda \in J(K\Lambda)$. Then,
for any $\mu \in J(K\Lambda)$, there exists $h_{\mu} \in
H^{1}(K,G)$ such that
\begin{equation*}
\rho(c) = \psi(h_{\mu})^{-1} \theta(\mu)
\end{equation*}
if and only if $\mu \in \lambda \cP_{\theta}$.

In particular, for any coset $x \cP_{\theta}$ of $\cP_{\theta}$ in
$J(K\Lambda)$, it follows that either
\begin{equation*}
\theta(x \cP_{\theta}) \subseteq \Image(\psi) \cdot \Image(\rho)
\end{equation*}
or
\begin{equation*}
\theta(x \cP_{\theta}) \cap [\Image(\psi) \cdot \Image(\rho)] =
\emptyset.
\end{equation*}
\end{proposition}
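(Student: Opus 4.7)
The plan is to exploit the fact that the target $\cC(O_KG)$ of $\psi$, $\rho$, and $\theta$ is abelian, so the condition $\rho(c)=\psi(h_\mu)^{-1}\theta(\mu)$ can be freely rearranged. The proposition is then essentially a bookkeeping exercise in the definition of $\cP_\theta$.

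For the first (``if and only if'') statement, the forward direction is immediate. Assuming both $\rho(c)=\psi(h)^{-1}\theta(\lambda)$ and $\rho(c)=\psi(h_\mu)^{-1}\theta(\mu)$, subtracting (in the multiplicative group $\cC(O_KG)$) gives
\begin{equation*}
\theta(\mu\lambda^{-1}) \;=\; \psi(h_\mu)\psi(h)^{-1} \;=\; \psi(h_\mu h^{-1}) \;\in\; \Image(\psi),
\end{equation*}
so $\mu\lambda^{-1}\in\cP_\theta$ by Definition \ref{D:pdef}, i.e. $\mu\in\lambda\cP_\theta$. Conversely, given $\mu=\lambda x$ with $x\in\cP_\theta$, write $\theta(x)=\psi(h')$ for some $h'\in H^1(K,G)$. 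Then
\begin{equation*}
\psi(hh')^{-1}\theta(\mu) \;=\; \psi(h)^{-1}\psi(h')^{-1}\theta(\lambda)\theta(x) \;=\; \psi(h)^{-1}\theta(\lambda) \;=\; \rho(c),
\end{equation*}
so $h_\mu:=hh'$ does the job.

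The ``in particular'' statement is a direct consequence. Suppose some $\mu_0\in x\cP_\theta$ satisfies $\theta(\mu_0)\in\Image(\psi)\cdot\Image(\rho)$. Then we may write $\theta(\mu_0)=\psi(h_0)\rho(c_0)$ for some $h_0\in H^1(K,G)$ and $c_0\in\Cl(O_KG)$, which is precisely the hypothesis of the first part with $(c,h,\lambda)$ replaced by $(c_0,h_0,\mu_0)$. Applying what was just proved, every $\mu$ in the coset $\mu_0\cP_\theta=x\cP_\theta$ also satisfies $\rho(c_0)=\psi(h_\mu)^{-1}\theta(\mu)$ for some $h_\mu$, so $\theta(\mu)\in\Image(\psi)\cdot\Image(\rho)$. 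Hence either every element of $\theta(x\cP_\theta)$ lies in $\Image(\psi)\cdot\Image(\rho)$ or none does.

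There is no serious obstacle; the only mild subtlety is the sign/inverse bookkeeping and the fact that one must use that $\psi$ and $\theta$ are homomorphisms to a common abelian group, which is already established in Definitions \ref{D:psi} and \ref{D:theta}. Everything else is formal.
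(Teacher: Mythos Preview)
Your proof is correct and follows essentially the same approach as the paper: both arguments manipulate the identity $\rho(c)=\psi(h)^{-1}\theta(\lambda)$ in the abelian group $\cC(O_KG)$ to reduce the equivalence to the defining condition of $\cP_\theta$. You spell out the ``in particular'' statement in slightly more detail than the paper (which simply remarks that it follows from the first part), but the underlying reasoning is identical.
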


\begin{proof} 
Suppose that
\begin{equation*}
\rho(c)= \psi(h)^{-1} \theta(\lambda) = \psi(h_{\mu})^{-1} \theta(\mu),
\end{equation*}
with $h_{\mu} \in H^1(K,G)$ and $\mu \in J(KG)$. Then we have
\begin{equation*}
\theta(\lambda) \theta(\mu)^{-1} = \psi(h) \psi(h_{\mu})^{-1},
\end{equation*}
and so $\lambda \mu^{-1} \in \cP_{\theta}$, as claimed.

Conversely, if 
\begin{equation*}
\rho(c)= \psi(h)^{-1} \theta(\lambda)
\end{equation*}
and $\lambda = \mu \nu$ for some $\nu \in \cP_{\theta}$, then we have
\begin{align*}
\rho(c)&= \psi(h)^{-1} \theta(\lambda) \\
&= \psi(h)^{-1} \theta(\mu) \theta(\nu) \\
&= [\psi(h) \psi(h_{\nu})]^{-1} \theta(\mu)
\end{align*}
for some $h_{\nu} \in H^{1}(K,G)$, since $\nu \in \cP_{\theta}$.

This establishes the result. 
\end{proof}

We now observe that if $c \in \cR(O_KG)$ with
\begin{equation*}
\rho(c)= \psi(h)^{-1} \theta(\lambda)
\end{equation*}
for some $h \in H^1(K,G)$ and $\lambda \in J(K\Lambda)$, then Theorem
\ref{T:leon2} implies that in fact $h \in H^{1}_{tr}(K,G)$ (cf. also
Remark \ref{R:newleon}(1) above). We can therefore see from
Proposition \ref{P:pprop} that counting tame Galois $G$-extensions of
$K$ with a given realisable class is essentially equivalent to
counting elements in $\bF \cap \lambda \cP_{\theta}$ for a fixed coset
$\lambda \cP_{\theta}$ of $\cP_{\theta}$ in $J(K\Lambda)$. We
therefore now focus our attention on obtaining a good description of
$\bF \cap \lambda \cP_{\theta}$.

Fix a set of representatives $T$ of $\Omega_K\backslash G(-1)$, and
for each $t \in T$, let $K(t)$ be the smallest extension of $K$ such
that $\Omega_{K(t)}$ fixes $t$. Then the Wedderburn decomposition of
$K\Lambda$ is given by
\begin{equation} \label{E:wiso} 
K\Lambda = \Map_{\Omega_K}(G(-1), K^c) \simeq \prod_{t \in T}K(t),
\end{equation}
where the isomorphism is induced by evaluation on the elements of $T$.

\begin{definition} \label{D:modray} (See \cite[\S6]{Mc})
Let $\cM$ be an integral ideal of $O_K$. For each finite place $v$ of
$K$ we set $U_{\cM}(O_{K,v}^{c}) = (1+\cM O_{K,v}^{c}) \cap
(O_{K,v}^{c})^{\times}$.  We define
\begin{equation*}
U_{\cM}'(\Lambda_v) \subseteq (K_v \Lambda)^{\times} =
\Map_{\Omega_{v}}(G(-1), (K_{v}^{c})^{\times})
\end{equation*}
by
\begin{equation*}
U_{\cM}'(\Lambda_v):= \left\{ g_v \in (K_v\Lambda)^{\times} \mid
g_v(s) \in U_{\cM}(O_{K,v}^{c}) \quad \forall s \neq 1 \right\}
\end{equation*}
(with $g_v(1)$ allowed to be arbitrary).

Set
\begin{equation*}
U_{\cM}'(\Lambda):= \left( \prod_v U_{\cM}'(\Lambda_v) \right) \cap
J(K\Lambda).
\end{equation*}

The \textit{modified ray class group modulo} $\cM$ of $\Lambda$ is
defined by
\begin{equation*}
\Cl_{\cM}'(\Lambda):= \frac{J(K\Lambda)}{(K \Lambda)^{\times} \cdot
  U_{\cM}'(\Lambda)}.
\end{equation*}
The group $\Cl_{\cM}'(\Lambda)$ is finite, and is isomorphic to the
product of the ray class groups modulo $\cM$ of the Wedderburn
components $K(t)$ (see \eqref{E:wiso}) of $K\Lambda$. \qed
\end{definition}

The following result shows that each coset $\lambda \cP_{\theta}$ of
$\cP_{\theta}$ in $J(K \Lambda)$ is a disjoint union of cosets of
$U_{\cM}'(\Lambda) \cdot K \Lambda$ in $J(K(\Lambda))$ for any suitably
chosen ideal $\cM$ of $O_K$.

\begin{proposition} \label{P:finiteness}
Let $\cM$ be an integral ideal of $O_K$ that is divisible by both
$|G|$ and $\exp(G)^2$ (where $\exp(G)$ denotes the exponent of
$G$). Then there is a natural quotient homomorphism
\begin{equation*}
f_{\cM}: \Cl_{\cM}'(\Lambda) \to \frac{J(K\Lambda)}{\cP_{\theta}}.
\end{equation*}
In particular, the group $J(K\Lambda)/\cP_{\theta}$ is finite.
\end{proposition}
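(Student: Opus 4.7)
The plan is to reduce the proposition to showing the subgroup inclusion
\begin{equation*}
(K\Lambda)^\times \cdot U_\cM'(\Lambda) \subseteq \cP_\theta.
\end{equation*}
Since $\theta$ is a homomorphism and $\Image(\psi)$ is a subgroup of $\cC(O_KG)$, the set $\cP_\theta = \theta^{-1}(\Image(\psi))$ is itself a subgroup of $J(K\Lambda)$. Once the inclusion above is established, the projection $J(K\Lambda) \to J(K\Lambda)/\cP_\theta$ factors through the quotient $\Cl_\cM'(\Lambda) = J(K\Lambda)/[(K\Lambda)^\times \cdot U_\cM'(\Lambda)]$, and the resulting surjection is the required $f_\cM$. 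Finiteness of $J(K\Lambda)/\cP_\theta$ follows immediately, since $\Cl_\cM'(\Lambda)$ is finite by Definition \ref{D:modray}.

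For the global factor I would show $(K\Lambda)^\times \subseteq \cP_\theta$ by a direct comparison of definitions. By construction in Section \ref{S:leon}, $\Theta^t((K\Lambda)^\times) \subseteq \cH(KG)$ (and $\Theta^t$ commutes with local completion, so this is the diagonal image inside $\cH(\bA(KG))$). Every element $\alpha \in \cH(KG)$ may be written as $\br_G(b) \cdot G$ for $b$ a normal basis generator of some $G$-Galois $K$-algebra $K_h$, with $h \in H^1(K,G)$. By Definition \ref{D:psi}, the image of $\alpha$ under the diagonal inclusion $\cH(KG) \hookrightarrow \cH(\bA(KG))$ followed by the quotient onto $\cC(O_KG)$ is precisely $\psi(h)$. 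Hence $\theta((K\Lambda)^\times) \subseteq \Image(\psi)$, as required.

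The substantive work is the inclusion $U_\cM'(\Lambda) \subseteq \cP_\theta$. I would establish the stronger local statement
\begin{equation*}
\Theta_v^t\bigl(U_\cM'(\Lambda_v)\bigr) \subseteq \cH(O_{K,v}G)
\end{equation*}
for every finite place $v$ of $K$; this assembles into $\Theta^t(U_\cM'(\Lambda)) \subseteq \cH(\bA(O_KG))$, whence $\theta(U_\cM'(\Lambda)) = \{1\} \subseteq \Image(\psi)$. This local statement is a direct computation of the type carried out in \cite[\S 6]{Mc}: for $g_v \in U_\cM'(\Lambda_v)$, the value $\Theta_v^t(g_v)$ is obtained from products of fractional powers of the principal units $g_v(s) \in 1 + \cM O_{K,v}^c$ weighted by the pairings $\langle \chi, s \rangle \in \tfrac{1}{\exp(G)}\bZ$ prescribed by the Stickelberger map. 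The main obstacle lies precisely in this local integrality claim, and in particular in extracting the exponent $\exp(G)^2$. One power of $\exp(G)$ is needed so that the fractional binomial expansions defining the $\exp(G)$-th roots converge $p$-adically to principal units of $O_{K,v}^c$; the second power is required to absorb the denominators that appear when reassembling the character-by-character data into a resolvend, and to preserve $\Omega_{K_v}$-equivariance modulo $G$, so that the output actually descends to $H(O_{K,v}G)/G = \cH(O_{K,v}G)$ rather than merely to $\cH(K_vG)$. Once this local verification is carried out component-by-component via the Wedderburn decomposition \eqref{E:wiso}, the three ingredients combine formally to give the proposition.
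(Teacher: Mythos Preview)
Your argument is correct and follows essentially the same route as the paper: one shows $\cP_{\cM}:=(K\Lambda)^\times\cdot U_\cM'(\Lambda)\subseteq\cP_\theta$, and then the quotient map and finiteness are immediate. The only difference is that the paper cites \cite[Proposition~6.9]{Mc} in one stroke to conclude $\theta(\cP_{\cM})=0$, whereas you split off the global factor and observe that $\Theta^t((K\Lambda)^\times)$ lands in $\cH(KG)$ and hence in $\Image(\psi)$ rather than necessarily in the trivial class; this is a harmless and arguably more precise refinement, and the local computation you sketch is exactly the content of McCulloh's result.
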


\begin{proof} Set
\begin{equation*}
\cP_{\cM}:= (K \Lambda)^{\times} \cdot U_{\cM}'(\Lambda) \subseteq J(K
\Lambda)
\end{equation*}
McCulloh has shown (see \cite[Proposition 6.9]{Mc}) that if $\cM$ is
divisible by both $|G|$ and $\exp(G)^2$, then 
\begin{equation*}
\Theta^t(\cP_{\cM}) \subseteq \cH(\bA(O_KG)),
\end{equation*}
whence it follows from the definition of $\theta$ that
$\theta(\cP_{\cM}) =0$. This implies that
\begin{equation*}
\cP_{\cM} \subseteq \cP_{\theta} \subseteq J(K\Lambda),
\end{equation*}
and so there is a natural quotient homomorphism $f_{\cM}$, as
asserted. Since $\Cl_{\cM}'(\Lambda)$ is finite, it follows that the
same is true of $J(K\Lambda)/\cP_{\theta}$.
\end{proof}

Let $I(\Lambda)$ denote the group of fractional ideals of
$\Lambda$. Via the Wedderburn decomposition \eqref{E:wiso} of
$\Lambda$, each ideal $\fA$ in $I(\Lambda)$ may be written $\fA=
(\fA_t)_{t \in T}$, where each $\fA_t$ is a fractional ideal of
$O_{K(t)}$.

For any idele $\lambda \in J(K\Lambda)$, we write $\co(\lambda) \in
I(\Lambda)$ for the ideal obtained by taking the idele content of
$\lambda$. The following proposition describes exactly which ideals in
$I(\Lambda)$ arise via taking the idele content of elements in $\bF
\subseteq J(K\Lambda)$.

\begin{proposition} \label{P:fideals}
Let $\fF$ be the subset of $I(\Lambda)$ defined by
\begin{equation*}
\fF =\{ \co(f) \mid f \in \bF \}.
\end{equation*}
Then $\fF$ consists precisely of those ideals $\ff = (\ff_t)_{t \in T}$
such that

$\bullet$ $\ff_1 = O_K$;

$\bullet$ $N_{K(\Lambda)/K}(\ff) := \prod_{t \in T} N_{K(t)/K}(\ff_t)$
is a squarefree $O_K$-ideal;

$\bullet$ $\ff_t$ is coprime to the order $|t|$ of $t$.

In particular, if we view $\bF_v$ as being a subset of $\bF$ via the
obvious embedding $(K_v \Lambda)^{\times} \subseteq J(K\Lambda)$, 
then
\begin{equation*}
\fF_v:= \{ \co(f_v) \mid f_v \in \bF_v \}
\end{equation*}
consists precisely of the invertible prime ideals of $\Lambda$ arising
via \eqref{E:wiso} from the invertible prime ideals of relative degree
one over $v$ in those Wedderburn components $K(t)$ of $\Lambda$ for
which $t \neq 1$ and $v(|t|) = 0$.
\end{proposition}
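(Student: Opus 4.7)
The plan is to reduce everything to a single place. Since each $f \in \bF$ is, by definition, a finite product of prime $F$-elements $f_{v,s_v}$ supported at distinct places $v$, and since the content map $\co$ is multiplicative across places, it suffices to compute $\co(f_{v,s})$ for a single nontrivial $f_{v,s}$; the same computation will simultaneously yield the description of $\fF_v$ asserted in the second half of the proposition. The basic tool is the localisation of the Wedderburn decomposition \eqref{E:wiso},
\begin{equation*}
K_v \Lambda_v \simeq \prod_{t \in T} \prod_{v' \mid v} K(t)_{v'},
\end{equation*}
under which the places $v' \mid v$ of $K(t)$ correspond to the $\Omega_{K_v}$-orbits inside the $\Omega_K$-orbit of $t$, and $[K(t)_{v'} : K_v]$ equals the size of the corresponding orbit.

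For $s \in G_{(q_v-1)}$ with $s \neq 1$, I would let $t \in T$ represent the $\Omega_K$-orbit of $s$. The divisibility $|s| \mid q_v - 1$ forces $\mu_{|s|} \subseteq K_v$, so $\Omega_{K_v}$ acts trivially on $s$ via the inverse cyclotomic character. Hence the $\Omega_{K_v}$-orbit of $s$ is $\{s\}$, corresponding to a unique place $v'$ of $K(t)$ with $K(t)_{v'} = K_v$, so $v'$ has residue degree and ramification index both equal to $1$ over $v$. The element $f_{v,s}$ then maps to $\pi_v$ in the $(t, v')$-slot and to $1$ in every other component, so $\co(f_{v,s})$ is precisely the prime ideal $v'$ of $K(t)$. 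Moreover $t \neq 1$ and $v(|t|) = 0$, since $|t| = |s|$ is coprime to the residue characteristic of $v$. This already establishes the description of $\fF_v$ in the second assertion.

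Assembling these local computations, for any $f = \prod_v f_{v,s_v} \in \bF$, the ideal $\ff = \co(f) = (\ff_t)_{t \in T}$ satisfies $\ff_1 = O_K$ (because $f_{v,1} = 1$), each $\ff_t$ is coprime to $|t|$, and $N_{K(\Lambda)/K}(\ff)$ is squarefree, since each $v$ contributes a single prime of relative degree one lying in exactly one slot $\ff_t$. For the converse, I will start from an $\ff$ satisfying the three conditions and recover the unique $f \in \bF$ with $\co(f) = \ff$ as follows: for each prime $v'$ of $K(t)$ in the support of $\ff_t$, lying above $v$, squarefreeness of the norm forces $f(v' \mid v) = 1$; coprimality of $\ff_t$ with $|t|$ makes $K(t) \subseteq K(\mu_{|t|})$ unramified at $v$, so $e(v' \mid v) = 1$; thus $[K(t)_{v'} : K_v] = 1$, and $v'$ corresponds to an $\Omega_{K_v}$-fixed element $s_v \in \Omega_K \cdot t$. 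Since $|s_v|$ is coprime to the residue characteristic and $\mu_{|s_v|} \subseteq K_v$, we get $|s_v| \mid q_v - 1$, i.e. $s_v \in G_{(q_v-1)}$. Squarefreeness also ensures at most one such $v'$ per $v$, so setting $f_v = f_{v, s_v}$ at these finitely many places and $f_v = 1$ elsewhere yields the required element of $\bF$.

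The main technical point is the bookkeeping that translates between $\Omega_{K_v}$-orbits on $\Omega_K \cdot t$ and places $v' \mid v$ of $K(t)$, and between $\Omega_{K_v}$-fixity of an element of $G(-1)$ and divisibility $|s| \mid q_v - 1$; both are standard, so no serious obstacle arises. Once the local Wedderburn picture is made explicit, all three asserted conditions drop out transparently from the computation of $\co(f_{v,s})$, and the converse is equally direct.
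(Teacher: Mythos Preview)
Your argument is correct and complete. The paper itself does not give a proof of this proposition: it simply refers the reader to \cite[pages 288--289]{Mc}. Your writeup is in fact a faithful reconstruction of McCulloh's argument there, namely identifying the local Wedderburn components $K(t)_{v'}$ with $\Omega_{K_v}$-orbits on the $\Omega_K$-orbit of $t$, observing that $s \in G_{(q_v-1)}$ is $\Omega_{K_v}$-fixed (so corresponds to a degree-one place), reading off $\co(f_{v,s})$ from the single nontrivial component, and then running the dictionary in reverse for the converse. One small point worth making explicit (you use it implicitly): $K(t)$ is exactly $K(\mu_{|t|})$, not merely contained in it, since the $\Omega_K$-action on $G(-1)$ is through the cyclotomic character; this is why coprimality of $\ff_t$ with $|t|$ forces $K(t)/K$ to be unramified at the relevant primes.
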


\begin{proof} 
See \cite[pages 288-289]{Mc}.
\end{proof} 

\begin{example} \label{E:keyexample}
Suppose that $h \in H^{1}_{tr}(K,G)$. Recall (see Remark
\ref{R:newleon}) that there exist unique $c \in \cR(O_KG)$ and $f \in
\bF$ such that $\rho(c) = \psi(h)^{-1} \theta(f)$. Let
\begin{equation*}
\co(f) = \ff = (\ff_t)_{t \in T}.
\end{equation*}
Then each ideal $\ff_t$ of $O_{K(t)}$ may be written as a product
\begin{equation*}
\ff_t = \cP_{t,1} \cdots \cP_{t,i_t}
\end{equation*}
of primes of relative degree one in $K(t)/K$. Each finite place
$v$ of $K$ that ramifies in $K_h/K$ lies beneath exactly one ideal
$\cP_{t,j}$, and in this case the ramification index of $v$ in $K_h/K$
is equal to the order $|t|$of $t$ (see \cite[Proposition 5.4]{Mc}). It therefore
follows from the standard formula for tame discriminants that
\begin{equation*}
\disc(K_h/K) = \prod_{t \in T} N_{K(t)/K} (\ff_t)^{(|t|-1)|G|/|t|}.
\end{equation*}
Hence the absolute norm $\cD(K_h/K)$ of $\disc(K_h/K)$ is given by
\begin{equation*}
\cD(K_h/K) = \left[O_K: \prod_{t \in T} N_{K(t)/K}
  (\ff_t)^{(|t|-1)|G|/|t|}\right].
\end{equation*}

Let $d(\ff) = (d(\ff_t))_{t \in T}$ denote the ideal in $I(\Lambda)$
defined by $d(\ff)_{1} = O_K$ and 
\begin{equation*}
d(\ff)_t = \ff_{t}^{(|t|-1)|G|/|t|}
\end{equation*}
for $t \neq 1$. Then since
\begin{equation*}
[O_{K(t)}: \ff_t] = [O_K: N_{K(t)/K}(\ff_t)],
\end{equation*}
for each $t \neq 1$, it follows that we have
\begin{equation*}
\cD(K_h/K) = [\Lambda: d(\ff)].
\end{equation*}
\qed
\end{example}

Example \ref{E:keyexample} motivates the following definitions.

\begin{definition} \label{D:weight}
We say that a function
\begin{equation*}
\cW: T \to \bZ_{\geq 0}
\end{equation*}
is a \textit{weight function on $T$} (or just a \textit{weight} for
short) if $\cW(1)=0$ and $\cW(t) \neq 0$ for all $t \neq 1$.

For any weight $\cW$, we set
\begin{equation*}
\alpha_{\cW} := \min\{\cW(t): t \neq 1 \}.
\end{equation*}
\qed
\end{definition}

\begin{definition} \label{D:idweight}
Suppose that $\cW$ is a weight and $\fA =
  (\fA_t)_{t \in T}$ is an ideal in $I(\Lambda)$. We write
  $d_{\cW}(\fA) = (d_{\cW}(\fA)_t)_{t \in T}$ for the ideal in
  $I(\Lambda)$ defined by $d_{\cW}(\fA)_t = \fA_{t}^{\cW(t)}$. 
\qed
\end{definition}

\begin{definition} \label{D:discweight}
Suppose that $h \in H^{1}_{tr}(K,G)$ with $\rho(c) = \psi(h)^{-1}
\theta(f)$. For any weight function $\cW$ on $T$, we set
\begin{equation} \label{E:discweight}
D_{\cW}(K_h/K):= [\Lambda: d_{\cW}(\co(f))].
\end{equation}
\end{definition}

\begin{example} \label{E:exweight} Let $K_h/K$ be any tamely ramified
  Galois $G$-extension of $K$.
\smallskip

(1) Define a weight function $\cW_{\disc}$ on $T$ by $\cW_{\disc}(t)=
    (|t|-1)|G|/|t|$ for $t \neq 1$. Then we see from Example
    \ref{E:keyexample} that $D_{\cW_{\disc}}(K_h/K)$ is equal to the
    absolute norm of the relative discriminant of $K_h/K$.
\smallskip

(2) Define a weight function $\cW_{\ram}$ on $T$ by $\cW_{\ram}(t) =
    1$ for $t \neq 1$. Then $D_{\cW_{\ram}}(K_h/K)$ is equal to the
    absolute norm of the product of the primes of $K$ that are
    ramified in $K_h/K$.
\qed
\end{example}

We now fix once and for all an integral ideal $\cM$ of $O_K$ that is
divisible by both $|G|$ and $\exp(G)^2$, and we also fix a weight
function $\cW$ on $T$.

\begin{definition}
For each $c \in \cR(O_KG)$ and each real number $X>0$, we write
$N_{\cW}(c,X;\cM)$ for the number of tame Galois $G$-extensions
$K_h/K$ for which $(O_h) = c$, $D_{\cW}(K_h/K)$ is coprime to $\cM$,
and $D_{\cW}(K_h/K) \leq X$. 

We define $M_{\cW}(X;\cM)$ to be the number of tame Galois
$G$-extensions $K_h/K$ for which $D_{\cW}(K_h/K) \leq X$ and
$D_{\cW}(K_h/K)$ is coprime to $\cM$.
\end{definition}

\begin{question} \label{Q:general}
What can be said about the behaviour of $N_{\cW}(c,X;\cM)$ as $X \to
\infty$? For example, is
\begin{equation*}
Z_{\cW}(c;\cM):=\lim_{X \to \infty} \frac{N_{\cW}(c,X;\cM)}{M_{\cW}(X;\cM)}
\end{equation*}
independent of $c$? \qed
\end{question}

For each coset $\fc$ of $\cP_{\cM}$ in $J(K \Lambda)$, set
\begin{equation*}
\kappa_{\cW}(\fc,X;\cM) = |\left\{ f \in \bF \cap \fc \mid \text{
  $(\co(f), \cM)=1$ and $[\Lambda:
  d_{\cW}(\co(f))] \leq X$} \right\}|
\end{equation*}
Then it follows from Remark \ref{R:newleon}(1) and Proposition
\ref{P:pprop} that there is a unique coset $\lambda_c \cP_{\theta}$ of
$\cP_{\theta}$ in $J(K \Lambda)$ such that
\begin{align} \label{E:countfunc}
N_{\cW}(c,X;\cM) &= |\Ker(\psi)| \cdot | \{ f \in F \cap \lambda_c
\cP_{\theta} \mid \text{$(\co(f),\cM)=1$ and $[\Lambda:
    d_{\cW}(\co(f))] \leq X$} \} | \notag \\ 
&= |\Ker(\psi)| \cdot \sum_{\fc \in f_{\cM}^{-1}(c)} \kappa_{\cW}(\fc,
X;\cM).
\end{align} 
We therefore see that the behaviour of $N_{\cW}(c,X;\cM)$ as $X \to
  \infty$ is governed by that of the $\kappa_{\cW}(\fc, X;\cM)$. For
  example, if $\kappa_{\cW}(c,X;\cM)$ is asymptotically independent of
  $\fc$ (see Definition \ref{D:ind} below), then it follows that
  asymptotically, $N_{\cW}(c,X;\cM)$ is independent of the realisable
  class $c \in \cR(O_KG)$.

%%%%%%%%%%%%%%%%%%%%%%%%%%%%%%%%%%%%%%%%%%%%%%%%%%%%%%%%%%%%%%%%%%%%%%%%%%%%%%

\section{Euler Products} \label{S:euler}

Recall (see Proposition \ref{P:fideals}) that $\fF$ denotes the subset
of $I(\Lambda)$ defined by 
\begin{equation*}
\fF =\{ \co(f) \mid f \in \bF \}.
\end{equation*}

\begin{definition} \label{D:dfunc}
We define functions $D(s)$ and $D_{\cM}(s)$ of a complex variable $s$
by
\begin{equation}
D(s):= \sum_{\fa \in \fF} [\Lambda:d_{\cW}(\fa)]^{-s};\qquad 
D_{\cM}(s):= \sum_{\substack{ \fa \in \fF \\ (\fa,
    \cM)=1}}[\Lambda:d_{\cW}(\fa)]^{-s}.
\end{equation}

For any $\fc \in \Cl_{\cM}'(\Lambda)$, we set
\begin{equation}
 D_{\fc}(s):= \sum_{\fa \in \fF \cap \fc} [\Lambda:d_{\cW}(\fa)]^{-s};\qquad
D_{\fc,\cM}(s):= \sum_{\substack{ \fa \in \fF \cap \fc \\ (\fa,
    \cM)=1}}[\Lambda:d_{\cW}(\fa)]^{-s}.
\end{equation}

Each of the functions above also depends upon the choice of $\cW$; we
omit this dependence from our notation.
\qed
\end{definition}

Let $\chi$ be any character of $\Cl'_{\cM}(\Lambda)$, and set $T':= T
\backslash \{1\}$. Then via the Wedderburn decomposition
\eqref{E:wiso} of $\Lambda$, we may write $\chi =(\chi_t)_{t \in
T'}$, where each $\chi_t$ is a character of the ray class group modulo
$\cM$ of $K(t)$. We may view $\chi$ as being a map on the set of all
integral ideals $\fa = (\fa_t)_{t \in T}$ in the standard manner by
setting $\chi(\fa) = 0$ if $\fa_1 \neq O_K$ or if $\fa$ is not coprime
to $\cM$.

\begin{definition} \label{D:chidfunc}
For each character $\chi$ of $\Cl_{\cM}'(\Lambda)$, we define
\begin{equation}
D(s,\chi) = \sum_{\fa \in \fF} \chi(\fa) [\Lambda:d_{\cW}(\fa)]^{-s}.
\end{equation}
\qed
\end{definition}

With the above definitions, we have
\begin{equation} \label{E:dfourier}
D_{\fc, \cM}(s) = \frac{1}{|\Cl_{\cM}'(\Lambda)|} \sum_{\chi}
\overline{\chi}(\fc)D(s,\chi),
\end{equation}
where the sum is over all characters $\chi$ of $\Cl_{\cM}'(\Lambda)$.

\begin{definition} \label{D:index} (cf. \cite[Chapter I]{CF})
Let $\fa = (\fa_t)_{t \in T}$ be any ideal in $I(\Lambda)$. We define
the \textit{module index} $[\Lambda: \fa]_{O_K}$ to be the $O_K$-ideal
given by
\begin{equation}
[\Lambda: \fa]_{O_K}:= \prod_{t \in T} N_{K(t)/K}(\fa_t).
\end{equation}
\qed
\end{definition}

\begin{lemma} \label{L:numult}
For each integral $O_K$-ideal $\fb$, set
\begin{equation*}
\nu(\fb):= |\{ \fa \in \fF \mid [\Lambda:d_{\cW}(\fa)]_{O_K} = \fb
\}|.
\end{equation*}
Then $\nu$ is multiplicative, i.e. if $\fb_1, \fb_2$ are coprime
$O_K$-ideals, we have
\begin{equation*}
\nu(\fb_1 \fb_2) = \nu(\fb_1) \nu(\fb_2).
\end{equation*}
\end{lemma}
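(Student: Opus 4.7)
The plan is to exploit the local structure of $\fF$ together with the multiplicativity of the module index $[\Lambda:\,\cdot\,]_{O_K}$. First, I would unpack the definitions to write
\[
[\Lambda:d_{\cW}(\fa)]_{O_K} \;=\; \prod_{t \in T} N_{K(t)/K}(\fa_t)^{\cW(t)} \;=\; \prod_{t \in T'} N_{K(t)/K}(\fa_t)^{\cW(t)},
\]
where $T':= T \backslash \{1\}$; the last equality uses $\fa_1 = O_K$, which holds for every $\fa \in \fF$ by Proposition \ref{P:fideals}.

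Next, I would invoke the squarefreeness condition in Proposition \ref{P:fideals}: $N_{K(\Lambda)/K}(\fa) = \prod_{t \in T} N_{K(t)/K}(\fa_t)$ is a squarefree $O_K$-ideal. Since each $\fa_t$ is a product of primes of $K(t)$ of relative degree one over $K$, squarefreeness forces every finite place $v$ of $K$ to be the image of at most one prime of $\fa$: that is, there is at most one pair $(t,\cP)$ with $t \in T'$, $v \nmid |t|$, and $\cP$ a degree-one prime of $K(t)$ above $v$, such that $\cP$ divides $\fa_t$. Consequently each $\fa \in \fF$ is encoded by a function $v \mapsto \fa_v$, almost all of whose values are trivial, and the $v$-component of $[\Lambda:d_{\cW}(\fa)]_{O_K}$ is exactly $v^{\cW(t)}$ at a non-trivial value $(t,\cP)$ lying above $v$, and $O_K$ otherwise.

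Given coprime integral ideals $\fb_1, \fb_2$ of $O_K$, let $S_i$ be the set of finite places of $K$ dividing $\fb_i$, so $S_1 \cap S_2 = \emptyset$. For any $\fa \in \fF$ with $[\Lambda:d_{\cW}(\fa)]_{O_K} = \fb_1 \fb_2$, the dictionary above forces the non-trivial support of $\fa$ to equal $S_1 \sqcup S_2$, and I would define $\fa^{(i)}$ by retaining the local pieces over $v \in S_i$ and declaring the rest trivial. Each of the three properties characterising $\fF$ in Proposition \ref{P:fideals} is preserved under this truncation (the $t=1$ component remains $O_K$; the new norm divides the squarefree original norm; coprimality to $|t|$ is inherited), so $\fa^{(i)} \in \fF$ with $[\Lambda:d_{\cW}(\fa^{(i)})]_{O_K} = \fb_i$. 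This assembles into a bijection
\[
\{\fa \in \fF : [\Lambda:d_{\cW}(\fa)]_{O_K} = \fb_1\fb_2\} \;\longleftrightarrow\; \{\fa^{(1)} \in \fF : [\Lambda:d_{\cW}(\fa^{(1)})]_{O_K} = \fb_1\} \times \{\fa^{(2)} \in \fF : [\Lambda:d_{\cW}(\fa^{(2)})]_{O_K} = \fb_2\},
\]
with inverse given by multiplication, and comparing cardinalities yields $\nu(\fb_1 \fb_2) = \nu(\fb_1)\nu(\fb_2)$.

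There is no real obstacle here: all the content resides in the squarefreeness of $N_{K(\Lambda)/K}(\fa)$ that is built into the definition of $\fF$, which reduces an element of $\fF$ to purely local data indexed by the finite places of $K$. The rest is combinatorial bookkeeping rather than analysis.
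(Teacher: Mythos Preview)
Your proposal is correct and follows essentially the same route as the paper. The paper phrases the argument as two inequalities---multiplication of coprime $\fa_i$'s gives $\nu(\fb_1\fb_2)\geq\nu(\fb_1)\nu(\fb_2)$, and splitting an $\fa$ with index $\fb_1\fb_2$ by the prime factors of $\fb_1$ versus $\fb_2$ gives the reverse---whereas you package the same splitting-and-multiplying as a single explicit bijection; the underlying decomposition by support over places of $K$ is identical.
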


\begin{proof}
It follows from Proposition \ref{P:fideals} that if $\fa_1, \fa_2$ are
in $\fF$, and $[\Lambda:d_{\cW}(\fa_1)]_{O_K}$ and
$[\Lambda:d_{\cW}(\fa_2)]_{O_K}$ are coprime, then $\fa_1 \fa_2$ lies
in $\fF$ also. Hence, for any choice of ideals $\fa_1, \fa_2 \in \fF$
with $[\Lambda:d_{\cW}(\fa_i)]_{O_K} = \fb_i$ $(i=1,2)$, we have
\begin{align*}
[\Lambda: d_{\cW}(\fa_1 \fa_2)]_{O_K} &=[\Lambda:d_{\cW}(\fa_1)]_{O_K}
\cdot [\Lambda:d_{\cW}(\fa_2)]_{O_K} \\ 
&= \fb_1 \cdot \fb_2,
\end{align*}
and so we deduce that $\nu(\fb_1 \fb_2) \geq \nu(\fb_1) \nu(\fb_2)$.

To show the reverse inequality, set $\fb = \fb_1 \fb_2$, and let $\fa
\in\fF$ be any ideal such that $[\Lambda: d_{\cW}(\fa)]_{O_K} =
\fb$. For each $i =1,2$, let $\fa_i$ be the product of all primes
$\fP$ of $\Lambda$ with $\fP$ a prime factor of $\fa$ and $[\Lambda:
\fP]_{O_K}$ a prime factor of $\fb_i$. Then we have
\begin{equation} \label{E:splitting}
\fa = \fa_1 \fa_2,\quad \fa_i \in \fF, \quad \text{and}\quad [\Lambda:
  \fa_i]_{O_K} = \fb_i,\quad (i = 1,2).
\end{equation}
Furthermore, it follows via uniqueness of factorisation in $\Lambda$
and $O_K$ that $\fa_1$ and $\fa_2$ are the unique ideals satisfying
\eqref{E:splitting}. This implies that $\nu(\fb_1 \fb_2) \leq
\nu(\fb_1) \nu(\fb_2)$, and so we finally deduce that $\nu(\fb_1
\fb_2)= \nu(\fb_1) \nu(\fb_2)$  as asserted.
\end{proof}

\begin{proposition} \label{P:euler}
The functions $D(s)$ and $D(s, \chi)$ admit Euler product expansions
over the rational primes:
\begin{equation*}
D(s) = \prod_p D_p(s), \qquad D(s, \chi)= \prod_p D_p(s, \chi).
\end{equation*}
\end{proposition}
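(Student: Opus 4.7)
The plan is to regroup each series as a Dirichlet series in the rational integer $n=[\Lambda:d_\cW(\fa)]$, prove that the resulting arithmetic function is multiplicative in $n$, and then invoke the standard principle that a Dirichlet series with multiplicative coefficients factors as an Euler product over rational primes. Concretely, I would write $D(s)=\sum_{n\geq 1}\mu(n)\,n^{-s}$ with $\mu(n):=|\{\fa\in\fF:[\Lambda:d_\cW(\fa)]=n\}|$, using that $[\Lambda:d_\cW(\fa)]=N_{K/\bQ}([\Lambda:d_\cW(\fa)]_{O_K})$ from Definition \ref{D:index}.

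The main step is to establish multiplicativity of $\mu$. Suppose $\gcd(n_1,n_2)=1$ and $\fa_i\in\fF$ with $[\Lambda:d_\cW(\fa_i)]=n_i$. Because the absolute norms are coprime integers, the $O_K$-ideals $\fb_i:=[\Lambda:d_\cW(\fa_i)]_{O_K}$ are themselves coprime, and the existence half of the proof of Lemma \ref{L:numult} then supplies $\fa_1\fa_2\in\fF$ with index $n_1n_2$. Conversely, starting from $\fa\in\fF$ with $[\Lambda:d_\cW(\fa)]=n_1n_2$, I split the $O_K$-ideal $[\Lambda:d_\cW(\fa)]_{O_K}$ as $\fb_1\fb_2$ by grouping its prime factors according to which rational prime they lie above, and correspondingly split $\fa=\fa_1\fa_2$ by applying the same grouping to the prime ideals of the relevant $O_{K(t)}$; unique factorisation in $\Lambda$ and in $O_K$ makes this factorisation unique, exactly as in the second half of the proof of Lemma \ref{L:numult}. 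This gives $\mu(n_1n_2)=\mu(n_1)\mu(n_2)$. Collecting terms by the set of rational primes dividing $n$ then yields the Euler product $D(s)=\prod_p D_p(s)$ with $D_p(s)=\sum_{k\geq 0}\mu(p^k)p^{-ks}$, which may equivalently be described as the partial sum over $\fa\in\fF$ whose prime factorisation in $\Lambda$ involves only primes lying above $p$.

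The case of $D(s,\chi)$ is parallel: the grouping produces coefficients $a_n:=\sum_{\fa\in\fF,\,[\Lambda:d_\cW(\fa)]=n}\chi(\fa)$, and since $\chi$ is multiplicative on coprime ideals of $\Lambda$ (with the convention $\chi(\fa)=0$ when $(\fa,\cM)\neq 1$) and the splitting constructed above preserves coprimality, the same argument as for $\mu$ shows that $a_n$ is multiplicative in $n$, whence $D(s,\chi)=\prod_p D_p(s,\chi)$. Absolute convergence in a right half-plane is a minor side issue, to be handled by dominating the coefficients by those of $\prod_{t\in T}\zeta_{K(t)}(\alpha_\cW s)$, since every $\fa\in\fF$ is an ideal of $\Lambda$ and the weights $\cW(t)$ are all at least $\alpha_\cW$. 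The real obstacle throughout is nothing beyond careful bookkeeping: ensuring that the coprime splitting is genuinely unique at the level of $\Lambda$-ideals, not merely of the underlying $O_K$-ideals. This is ultimately a consequence of unique factorisation in each Wedderburn component $O_{K(t)}$, and once it is packaged cleanly both Euler products drop out together.
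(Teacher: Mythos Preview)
Your proposal is correct and follows essentially the same approach as the paper. The only organisational difference is that the paper first groups by $O_K$-ideals via the function $\nu(\fb)=|\{\fa\in\fF:[\Lambda:d_\cW(\fa)]_{O_K}=\fb\}|$ (whose multiplicativity is Lemma~\ref{L:numult}) and then collects the $O_K$-primes above each rational prime, whereas you collapse directly to the rational integer $n=[\Lambda:d_\cW(\fa)]$; the underlying coprime-splitting argument is identical in both cases.
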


\begin{proof}
Suppose that $\fa \in \fF$, with $[\Lambda: d_{\cW}(\fa)]_{O_K} =
\fb$. Then it follows from Proposition \ref{P:fideals} that
\begin{equation*}
[\Lambda: d_{\cW}(\fa)] = [O_K: \fb].
\end{equation*}
This in turn implies that
\begin{align*}
D(s) &= \sum_{\fa \in \fF} [\Lambda: d_{\cW}(\fa)] \\
&= \sum_{\substack{\fb \in I(O_K)\\ \fb \subseteq O_K}} \nu(\fb)[O_K:\fb]^{-s}.
\end{align*}
Since $\nu$ is multiplicative, we have
\begin{equation*}
D(s) = \prod_{\substack{ \fp \in I(O_K)\\ \text{$\fp$ prime} } }
D_{\fp}(s),
\end{equation*}
where
\begin{equation*}
D_{\fp}(s) = 1 + \sum_{m=1}^{\infty} \nu(\fp^m)[O_K:\fp]^{-ms}.
\end{equation*}

Next, we observe that since $\fa \in \fF$ implies that $\fa$ is
squarefree (see Proposition \ref{P:fideals}), it follows that we can
find a positive integer $N$, say, independent of $\fp$, such that
$\nu(\fp^{m}) = 0$ for all $m >N$. (In fact $N = |G| \cdot \max\{
\cW(t) \mid t \in T\}$ will do.) We may therefore write
\begin{equation*}
D_{\fp}(s) = 1 + \sum_{m=1}^{N} \nu(\fp^m)[O_K:\fp]^{-ms},
\end{equation*}
and we define $D_p(s)$ by
\begin{equation*}
D_p(s) = \prod_{\fp \mid p} D_{\fp}(s).
\end{equation*}
Thus we see that
\begin{equation*}
D(s) = \prod_p D_p(s),
\end{equation*}
as claimed.

We now show that $D(s, \chi)$ also admits an Euler product
expansion. For each rational prime $p$, set
\begin{equation*}
\fF(p):= \left\{ \fa \in \fF \mid \text{$[\Lambda: \fa]$ is a non-negative
  power of $p$} \right\}.
\end{equation*}
Observe that $\fa \in \fF(p)$ if and only if all prime factors of
$\fa$ in $\Lambda$ lie above $p$, and we have that
\begin{equation*}
D_p(s) = 1 + \sum_{\fa \in \fF(p)} [\Lambda:d_{\cW}(\fa)]^{-s}.
\end{equation*}
A very similar argument to that given above now shows that
\begin{equation*}
D(s, \chi)= \prod_p D_p(s,\chi),
\end{equation*}
where
\begin{equation} \label{E:Deuler}
D_p(s,\chi) = 1 + \sum_{\fa \in \fF(p)} \chi(\fa)
[\Lambda:d_{\cW}(\fa)]^{-s}.
\end{equation}
This establishes the desired result.
\end{proof}

%%%%%%%%%%%%%%%%%%%%%%%%%%%%%%%%%%%%%%%%%%%%%%%%%%%%%%%%%%%%%%%%%%%%%%%%%%
\section{The asymptotic behaviour of $\kappa_{\cW}(\fc,
  X;\cM)$} \label{S:asymptotics} 

In this section we shall obtain an expression for
\begin{equation*}
\kappa_{\cW}(\fc,X;\cM) := |\left\{ f \in \bF \cap \fc \mid \text{
  $(\co(f), \cM)=1$ and $[\Lambda:
  d_{\cW}(\co(f))] \leq X$} \right\}|
\end{equation*}
for each $\fc \in J(K\Lambda)/\cP_{\cM}$ when $X$ is large. We shall do this by
appealing to the following version of the D\'elange-Ikehara Tauberian
theorem.

\begin{theorem}  \label{T:delange}
Suppose that $f(s) = \sum_{n=1}^{\infty} a_n n^{-s}$ is a Dirichlet
series with non-negative coefficients, and that it is convergent for
$\Re(s)>a>0$. Assume that in its domain of convergence,
\begin{equation*}
f(s) = g(s)(s-a)^{-w} + h(s)
\end{equation*}
holds, where $g(s), h(s)$ are holomorphic functions in the closed
half-plane $\Re(s) \geq a$, $g(a) \neq 0$, and $w>0$. Then, as $X \to
\infty$, we have
\begin{equation*}
\sum_{n \leq X} a_n \sim \frac{g(a)}{a \cdot \Gamma(w)}
 \cdot X^a \cdot (\log X)^{w-1}.
\end{equation*}
\end{theorem}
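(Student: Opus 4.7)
The plan is to pass from the analytic properties of $f(s)$ to the asymptotic behaviour of $\sum_{n \leq X} a_n$ via Perron's formula and a contour deformation around the branch-point singularity at $s=a$. Since $w$ need not be a positive integer, the singularity $(s-a)^{-w}$ is genuinely a branch point rather than a pole, so a Hankel-type contour (rather than a simple residue calculation) is the natural tool.

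First I would invoke Perron's truncated formula
\begin{equation*}
\sum_{n \leq X} a_n = \frac{1}{2\pi i} \int_{c - iT}^{c + iT} f(s) \frac{X^s}{s}\, ds + E(X,T),
\end{equation*}
valid for some $c>a$ and any $T>0$, with $T$ to be chosen as a function of $X$. The non-negativity of the $a_n$, together with convergence of the Dirichlet series on $\Re(s)>a$, furnishes the crude bound $a_n = O(n^{a+\varepsilon})$, which is enough to control $E(X,T)$ once $T$ grows like a suitable power of $X$.

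Next I would deform the vertical segment of integration leftward to a Hankel-type contour wrapping the branch point $s=a$ (with branch cut chosen along $(-\infty, a]$). The hypothesis that $g$ and $h$ extend holomorphically to a neighbourhood of the closed half-plane $\Re(s) \geq a$ ensures that no further singularities are crossed, and the piece coming from $h(s) X^s/s$ is of strictly lower order after the deformation. The main calculation then reduces to evaluating the integral of $g(s)(s-a)^{-w} X^s/s$ along this Hankel contour, which I would treat by the substitution $s = a + u/\log X$. In the limit $X \to \infty$, replacing $g(s)$ by $g(a)$ and $1/s$ by $1/a$ near the branch point reduces the integral to the classical Hankel integral representing $1/\Gamma(w)$, yielding the claimed asymptotic
\begin{equation*}
\sum_{n \leq X} a_n \sim \frac{g(a)}{a \cdot \Gamma(w)}\, X^{a} (\log X)^{w-1}.
\end{equation*}

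The principal obstacle is justifying the contour shift, which requires polynomial growth control on $f(s)$ along vertical lines; the bare hypothesis of holomorphicity on $\Re(s)\geq a$ is not quite enough without supplementary convexity bounds. In the applications of this paper, such bounds come for free because $f$ will be assembled from finite products of Dirichlet $L$-series (see Proposition \ref{P:euler} and the discussion to follow), for which the needed convexity estimates on vertical strips are classical. An alternative, more robust route would be to follow Delange's original proof, which replaces the complex-analytic contour shift by convolution of the coefficient sequence with a smoothing kernel and uses only the boundary behaviour of $f$ at the abscissa $\Re(s) = a$; this avoids the growth issue entirely at the cost of a longer real-variable argument.
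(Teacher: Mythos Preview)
The paper does not prove this theorem at all: its proof consists of the single line ``See \cite[p.~121]{Nark}.'' The result is quoted as a black box from Narkiewicz's textbook, so there is no in-paper argument to compare against.

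Your sketch is a recognisable and broadly sensible outline, but you have correctly put your finger on its weak point. The Perron--Hankel route you describe genuinely requires growth control on $f(s)$ along vertical lines in order to bound the horizontal segments of the shifted contour and the Perron error term; the hypotheses of the theorem (mere holomorphy of $g$ and $h$ on the closed half-plane $\Re(s)\geq a$) do not supply this, and appealing to the particular $L$-function structure of the later applications would prove a weaker statement than the one asserted. So the first approach, as stated, does not close. Your fallback suggestion---Delange's original real-variable argument via smoothing---is the standard way the theorem is actually proved under precisely these hypotheses, and is what lies behind the cited reference; if you want a self-contained proof at this level of generality, that is the route to write out.
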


\begin{proof} 
See \cite[p. 121]{Nark}.
\end{proof}

We see from \eqref{E:dfourier} that each function $D_{\fc, \cM}(s)$ is
convergent in some right-hand half-plane, because $D(s,\chi)$ has an
Euler product expansion for all characters $\chi$ of
$\Cl_{\cM}'(\Lambda)$. It also follows from the definitions that each
$D_{\fc, \cM}(s)$ is a Dirichlet series with non-negative
coefficients. If we write
\begin{equation*}
D_{\fc,\cM}(s) = \sum_{n=0}^{\infty} a_n n^{-s}.
\end{equation*}
then we have
\begin{equation*}
\kappa_{\cW}(\fc,X;\cM)= \sum_{n \leq X} a_n.
\end{equation*}

For each $\fc \in J(K\Lambda)/\cP_{\cM}$, let $\beta(\fc; \cM)$ denote
right-most pole of $D_{\fc,\cM}(s)$ in the complex plane. It follows
from a theorem of Landau that $\beta(\fc; \cM)$ is real (see
\cite[Theorem 3.5]{Nark}). Let $\delta(\fc; \cM)$ denote the
order of this pole. Write
\begin{equation*}
\tau(\fc;\cM):= \lim_{s \to \beta(\fc;\cM)} (s-
\beta(\fc;\cM))^{\delta(\fc;\cM)} D_{\fc,\cM}(s).
\end{equation*}

\begin{proposition} \label{P:kappa}
As $X \to \infty$, we have
\begin{equation*}
\kappa_{\cW}(\fc, X;\cM) \sim  \frac{\tau(\fc;\cM)}{\beta(\fc;\cM)
  \cdot \Gamma(\delta(\fc;\cM))}  \cdot X^{\beta(\fc;\cM)}
\cdot
(\log X)^{\delta(\fc;\cM)-1}.
\end{equation*}
\end{proposition}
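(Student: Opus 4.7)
The plan is to apply the Delange--Ikehara Tauberian theorem (Theorem \ref{T:delange}) directly to $D_{\fc, \cM}(s)$. First I would check the input hypotheses. Non-negativity of the Dirichlet coefficients of $D_{\fc, \cM}(s)$ is immediate from Definition \ref{D:dfunc}, since each term $[\Lambda:d_{\cW}(\fa)]^{-s}$ contributes to the coefficient of $n^{-s}$ with $n = [\Lambda:d_{\cW}(\fa)]$ by counting ideals $\fa \in \fF \cap \fc$ coprime to $\cM$. Convergence of $D_{\fc, \cM}(s)$ in some right half-plane follows from the Fourier decomposition \eqref{E:dfourier} together with the Euler product expansion of Proposition \ref{P:euler}: each local factor $D_p(s,\chi)$ is, by \eqref{E:Deuler} and the squarefree constraint of Proposition \ref{P:fideals}, a polynomial in $p^{-s}$ of uniformly bounded degree, so the Euler product for $D(s,\chi)$ is dominated by a fixed power of the Dedekind zeta function of $K$ and therefore converges absolutely on some half-plane $\Re(s) > a_{0}$.

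Next I would invoke Landau's theorem for Dirichlet series with non-negative coefficients (cited in the paper as \cite[Theorem 3.5]{Nark}) to conclude that the abscissa of convergence of $D_{\fc, \cM}(s)$ is itself a singularity of this series. By the very definition of $\beta(\fc;\cM)$ this is a real positive number, and $\delta(\fc;\cM)$ and $\tau(\fc;\cM)$ record the order and leading coefficient of the pole at $s = \beta(\fc;\cM)$.

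The heart of the argument, and the step where the main work lies, is to show that $D_{\fc,\cM}(s)$ admits a meromorphic continuation to some open neighborhood of the closed half-plane $\Re(s) \ge \beta(\fc;\cM)$, with its only pole on the line $\Re(s) = \beta(\fc;\cM)$ located at $s = \beta(\fc;\cM)$ itself. Via \eqref{E:dfourier} this reduces to analysing each $D(s,\chi)$, and via the Wedderburn decomposition \eqref{E:wiso} each $D(s,\chi)$ will be expressible in terms of Hecke $L$-series attached to the ray class characters $\chi_t$ of the fields $K(t)$. This analytic continuation and pole analysis is precisely the subject of Sections \ref{S:dirichlet} and \ref{S:poles} according to the introduction, so for the present proposition I would take those results as given. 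With the continuation in hand, by the definitions of $\beta(\fc;\cM)$, $\delta(\fc;\cM)$ and $\tau(\fc;\cM)$, one may write
$$
D_{\fc,\cM}(s) \;=\; g(s)\,(s - \beta(\fc;\cM))^{-\delta(\fc;\cM)} \;+\; h(s)
$$
near $s = \beta(\fc;\cM)$, with $g, h$ holomorphic on an open neighborhood of $\Re(s) \ge \beta(\fc;\cM)$ and $g(\beta(\fc;\cM)) = \tau(\fc;\cM) \ne 0$.

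Theorem \ref{T:delange} then applies with $a = \beta(\fc;\cM)$, $w = \delta(\fc;\cM)$, and $g(a) = \tau(\fc;\cM)$, and since $\kappa_{\cW}(\fc,X;\cM) = \sum_{n \le X} a_n$ where $D_{\fc,\cM}(s) = \sum a_n n^{-s}$, the conclusion of that theorem is exactly the claimed asymptotic. The genuinely hard step is the analytic continuation underlying the third paragraph; once the Dirichlet/Hecke $L$-series machinery of the next two sections is in place, the Tauberian extraction here is essentially mechanical.
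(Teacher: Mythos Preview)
Your proposal is correct and follows exactly the approach of the paper: the paper's entire proof is the single sentence ``This follows directly from Theorem \ref{T:delange}.'' Your write-up simply unpacks the hypotheses of that Tauberian theorem more explicitly and correctly identifies that the analytic continuation and pole structure needed for the decomposition $D_{\fc,\cM}(s) = g(s)(s-\beta)^{-\delta} + h(s)$ are supplied by the later Sections \ref{S:dirichlet} and \ref{S:poles}.
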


\begin{proof} 
This follows directly from Theorem \ref{T:delange}.
\end{proof}

\begin{definition} \label{D:ind}
If
\begin{equation} \label{E:ind}
\kappa_{\cW}(\fc_1,X;\cM) \sim \kappa_{\cW}(\fc_2,X;\cM)
\end{equation}
as $X \to \infty$ for all $\fc_1, \fc_2 \in \Cl'_{\cM}(\Lambda)$, then
we shall say that \textit{$\kappa_{\cW}(\fc,X;\cM)$ is asymptotically
  independent of $\fc$}.

It is not hard to see that \eqref{E:ind} holds for all $\fc_1, \fc_2
\in \Cl_{\cM}'(\Lambda)$ if and only if the numbers $\tau(\fc;\cM),
\beta(\fc;\cM)$, and $\delta(\fc;\cM)$ do not vary with $\fc$. \qed
\end{definition}

We shall see in Section \ref{S:poles} that, in general,
$\kappa_{\cW}(\fc,X;\cM)$ is not asymptotically independent of $\fc$.

%%%%%%%%%%%%%%%%%%%%%%%%%%%%%%%%%%%%%%%%%%%%%%%%%%%%%%%%%%%%%%%%%%%%%%%%%%

\section{Dirichlet $L$-series} \label{S:dirichlet}

We now turn our attention to certain Dirchlet $L$-series associated to
$\Lambda$. These will be used in the next section to study the
behaviour of the functions $D(s)$ and $D(s,\chi)$.

\begin{definition} \label{D:dirichlet}
Suppose that $\chi = (\chi_t)_{t \in T'}$ is a character of
$\Cl_{\cM}'(\Lambda)$. We define
\begin{equation*}
L_{\Lambda}(s, \chi):= \sum_{\substack{ \fa \in I(\Lambda) \\
\fa \subseteq \Lambda }}
\chi(\fa)[\Lambda: d_{\cW}(\fa)]^{-s}.
\end{equation*}
\qed
\end{definition}

\begin{remark} \label{R:dirichlet}
(1) For each character $\chi = (\chi_t)_{t \in T'}$ of
  $\Cl_{\cM}'(\Lambda)$, the function $L_{\Lambda}(s, \chi)$ is a
  product of $L$-functions of number fields. If we set
\begin{equation*}
L_t(s, \chi_t) = \sum_{\substack{ \fb \in I(O_{K(t)}) \\ \fb \subseteq
    O_{K(t)}}} \chi_t(\fb) \fb^{-\cW(t)s},
\end{equation*}
then corresponding to the Wedderburn decomposition \eqref{E:wiso} of
$K \Lambda$, we have 
\begin{equation} \label{E:liso}
L_{\Lambda}(s, \chi) = \prod_{t \in T'} L_t(s, \chi_t).
\end{equation}

It follows from standard properties of Dirichlet $L$-series that
$L_t(\frac{1}{\cW(t)}, \chi_t) \neq 0$ if $\chi_t \neq 1$ and that
$L_t(s,\1_t)$ has a simple pole at $s = 1/\cW(t)$.
\smallskip

(2) The function $L_{\Lambda}(s,\chi)$ has an Euler product given by 
\begin{equation*}
L_{\Lambda}(s, \chi) = \prod_{p} L_{\Lambda, p}(s, \chi),
\end{equation*}
where 
\begin{equation*}
L_{\Lambda, p}(s, \chi) = 1 + \sum_{\fa}
\chi(\fa)[\Lambda:d_{\cW}(\fa)]^{-s};
\end{equation*}
here the sum is over all integral ideals $\fa$ of $\Lambda$ lying above the
rational prime $p$.

Let $P_1, \ldots, P_{n(p)}$ be the invertible primes of $\Lambda$
which lie above the rational prime $p$. (Note that the integer $n(p)$
is bounded above independently of $p$.) Then we also have
\begin{equation*}
L_{\Lambda, p}(s,\chi)= \prod_{i=1}^{n(p)}(1-\chi(P_i)[\Lambda:
  d_{\cW}(P_i)]^{-s})^{-1}.
\end{equation*}
\qed
\end{remark}

In Section \ref{S:poles} we shall compare the functions
$L_{\Lambda}(s,\chi)$ and $D(s,\chi)$ by examining corresponding terms
in their Euler product expansions. In order to do this, we shall need
the following two technical lemmas from \cite{Fo}.

\begin{lemma} \label{L:tech1}
\cite[Lemma 1.1]{Fo}.
Expand 
\begin{equation*}
F(z_1,...,z_n):= \prod_{i=1}^{n} (1-z_i)^{-1}
\end{equation*}
as an infinite series of monomials in $z_1,..,z_n$. Suppose that $0 <
r \leq r_0 < 1$, and that there is a positive integer $m \leq n$ such
that $|z_i| \leq r$ and $i \leq m$ and $|z_i| < r^2$ for $i > m$.

Then, if $f(z_1,...,z_n)$ is any subseries of the series for
$F(z_1,..,z_n)$ containing the terms $1 + \sum_{i=1}^{m} z_i$, we have
\begin{equation*}
|F(z_1,...,z_n) - f(z_1,...,z_n)| \leq \left[
  \frac{n(n+1)}{2(1-r_0)^{n+2}} + n \right] r^2.
\end{equation*}
\end{lemma}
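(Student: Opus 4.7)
The plan is to bound $F - f$ term-by-term using the monomial expansion $F(z_1,\ldots,z_n) = \sum_{\alpha \in \bZ_{\geq 0}^{n}} z^{\alpha}$. Any subseries $f$ corresponds to a subset of this index set, and by hypothesis that subset contains the zero multi-index together with the first $m$ standard basis vectors $e_1,\ldots,e_m$. Hence $F - f$ is itself a subseries of $F$, in which only two kinds of monomials can appear: the linear monomials $z_i$ for $m < i \leq n$, and the monomials of total degree at least $2$. The triangle inequality (replacing every coefficient by $1$) then gives
\[
|F(z_1,\ldots,z_n) - f(z_1,\ldots,z_n)| \;\leq\; \sum_{i=m+1}^{n} |z_i| \;+\; \sum_{|\alpha|\geq 2} |z_1|^{\alpha_1}\cdots|z_n|^{\alpha_n},
\]
and by the hypothesis $|z_i| < r^2$ for $i > m$ the first sum is at most $(n-m) r^2 \leq n r^2$, which is exactly the additive $n r^2$ appearing in the stated bound.

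To handle the second sum I would set $y_i := |z_i|$, so that $0 \leq y_i \leq r \leq r_0 < 1$, and introduce the one-parameter auxiliary function
\[
g(t) \;:=\; \prod_{i=1}^{n} (1 - t y_i)^{-1}, \qquad t \in [0,1].
\]
Substituting $t y_i$ for $z_i$ in the expansion of $F$ shows that the sum of all monomials of total degree $\geq 2$ in the variables $y_i$ is exactly $g(1) - g(0) - g'(0)$. Taylor's formula with integral remainder then yields
\[
g(1) - g(0) - g'(0) \;=\; \int_0^1 (1-t)\, g''(t)\,dt,
\]
so the remaining task is a uniform bound on $|g''(t)|$ for $t \in [0,1]$.

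For that uniform bound, logarithmic differentiation of $g$ gives
\[
g''(t) \;=\; g(t)\left[\left(\sum_{j=1}^{n} \frac{y_j}{1 - t y_j}\right)^{\!2} + \sum_{j=1}^{n} \frac{y_j^2}{(1-t y_j)^2}\right],
\]
and invoking $y_j \leq r$, $1 - t y_j \geq 1 - r_0$, and $g(t) \leq (1-r_0)^{-n}$ throughout $[0,1]$ I expect to obtain $|g''(t)| \leq n(n+1) r^2 / (1-r_0)^{n+2}$; integrating against $(1-t)\,dt$ on $[0,1]$ then introduces the factor $\tfrac{1}{2}$ and produces the first summand of the claimed estimate. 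The only step requiring any real care is this uniform estimate on $g''$, but it is a routine calculus computation, so I foresee no genuine obstacle.
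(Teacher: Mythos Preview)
Your argument is correct. The initial reduction---splitting off the linear terms $z_i$ with $i>m$ (contributing at most $nr^2$) and then bounding the contribution of all monomials of total degree $\geq 2$---is exactly what the paper does. Where you diverge is in the treatment of the higher-degree tail. The paper simply sets every $|z_i|$ equal to $r$, so the tail becomes $(1-r)^{-n}-1-nr$, and then checks the coefficient-wise inequality
\[
(1-x)^{-n}-1-nx \;\leq\; \frac{n(n+1)}{2}\,\frac{x^{2}}{(1-x)^{n+2}}
\]
by comparing the $x^{k}$-coefficients $\binom{n+k-1}{k}$ and $\tfrac{n(n+1)}{2}\binom{n+k-1}{k-2}$ directly; finally it replaces $(1-r)^{-(n+2)}$ by $(1-r_0)^{-(n+2)}$. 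Your route instead keeps the individual $y_i=|z_i|$, packages the tail as the Taylor remainder $g(1)-g(0)-g'(0)=\int_0^1(1-t)g''(t)\,dt$, and bounds $g''$ pointwise via logarithmic differentiation. Both yield exactly the same constant. The paper's method is a one-line combinatorial check once the right auxiliary series is written down; your method is more analytic and perhaps more transparently explains where the factor $\tfrac12$ and the exponent $n+2$ come from, at the cost of a short computation of $g''$.
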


\begin{proof} Since the series for $F-f$ has only positive
  coefficients, it follows that an upper bound for $|F-f|$ may be
  obtained by setting $z_i = r$ for $i \leq m$, and $z_i = r^2$ for
  $i>m$, and by replacing $f(z_1,...,z_n)$ with $1+ \sum_{i=1}^{m}
  z_i$.

For the terms of degree one in $F-f$, we have
\begin{equation*}
\left| \sum_{i=m+1}^{n} z_i \right| \leq nr^2.
\end{equation*}
Also, as each term of degree $k$ with $k \geq 2$ has absolute value at
most $r^k$, and there are ${n+1-k} \choose k$ such terms,
it follows that the sum of all such terms (for all $k \geq
2$) has absolute value at most $(1-r)^{-n} - (1+nr)$. 
By comparing the terms in the binomial expansions of
\begin{equation*}
h_1(x) = (1-x)^{-n} - (1+nx),\qquad h_2(x) =
\frac{n(n+1)}{2(1-x)^{n+2}} x^2
\end{equation*}
we see that the inequality
\begin{equation*}
0 < (1-r)^{-n} - (1+nr) \leq \frac{n(n+1)}{2(1-r)^{n+2}} \cdot r^2
\end{equation*}
holds. Therefore, since $r \leq r_0 < 1$, we obtain
\begin{equation*}
|F(z_1,...z_n) - f(z_1,...z_n)| \leq  \frac{n(n+1)}{2(1-r)^{n+2}}
\cdot r^2 + nr^2.
\end{equation*}
This completes the proof.
\end{proof}

\begin{lemma} \label{L:tech2}
\cite[Lemma 1.2]{Fo}.
Let $\phi(s)$ and $\phi^*(s)$ be Dirichlet series with Euler products
\begin{equation*}
\phi(s) = \prod_p \phi_p(s),\qquad \phi^*(s) = \prod_p \phi_p^*(s)
\end{equation*}
over the rational primes. Suppose that $\phi(s)$ and $\phi^*(s)$ are
absolutely convergent for $\Re(s) >1$.

Suppose further that:

(i) For every $p$, $\phi_p(s)$ and $\phi^*_p(s)$ are analytic for
$\Re(s)>0$;

(ii) Given a real number $\sigma_0$ with $0 < \sigma_0 < 1$, there
exists $B(\sigma_0) = B >0$ such that
\begin{equation*}
\left| \frac{\phi_p^*(s) - \phi_p(s)}{\phi_p^*(s)} \right| < B
\cdot p^{-2\sigma_0}
\end{equation*}
for every $p$ and $\sigma = \Re(s) \geq \sigma_0$.

Then $\phi(s) = \phi^*(s) \psi(s)$, where $\psi(s)$ is analytic for
$\Re(s)> 1/2$. If $z \in \bC$ satisfies $\Re(z)>1/2$, and if
 $\phi_p(z) \neq 0$ for all $p$, then $\psi(z) \neq 0$.
\end{lemma}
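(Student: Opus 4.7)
The plan is to set $\psi(s) := \phi(s)/\phi^*(s)$ on the half-plane $\Re(s) > 1$, where both Euler products converge absolutely and $\phi^*(s)$ is well-known to be nonzero, and then to show that this ratio extends analytically to $\Re(s) > 1/2$ by rewriting it as an absolutely convergent Euler product on the larger half-plane. So first I would write, for $\Re(s) > 1$,
\begin{equation*}
\psi(s) = \prod_{p} \psi_p(s), \qquad \psi_p(s) := \frac{\phi_p(s)}{\phi_p^*(s)} = 1 + e_p(s), \qquad e_p(s) := \frac{\phi_p(s) - \phi_p^*(s)}{\phi_p^*(s)}.
\end{equation*}
By hypothesis (i), each $\phi_p, \phi_p^*$ is analytic on $\Re(s) > 0$, and hypothesis (ii) both forces $\phi_p^*$ to be nonzero on $\Re(s) \geq \sigma_0$ (otherwise the quoted ratio would blow up) and gives the uniform bound $|e_p(s)| \leq B\, p^{-2\sigma_0}$ there. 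Hence each $\psi_p(s)$ is analytic on $\Re(s) \geq \sigma_0$ for any $0 < \sigma_0 < 1$.

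Next I would invoke the standard theory of infinite products of analytic functions. Fix any $\sigma_0 \in (1/2, 1)$. Then $2\sigma_0 > 1$, so $\sum_p p^{-2\sigma_0}$ converges, and the Weierstrass $M$-test shows that $\sum_p |e_p(s)|$ converges uniformly on $\{\Re(s) \geq \sigma_0\}$. The standard result that $\prod_p (1+e_p(s))$ converges uniformly and absolutely to an analytic function whenever $\sum_p |e_p(s)|$ does then gives an analytic function on $\Re(s) \geq \sigma_0$. Letting $\sigma_0 \searrow 1/2$, we obtain an analytic extension of $\psi$ to the half-plane $\Re(s) > 1/2$. Since the identity $\phi(s) = \phi^*(s)\psi(s)$ holds on $\Re(s) > 1$, we may regard it as the definition of the factorisation on the full half-plane $\Re(s) > 1$, with $\psi$ analytic beyond it.

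For the non-vanishing assertion, suppose $z$ satisfies $\Re(z) > 1/2$ and $\phi_p(z) \neq 0$ for all $p$. Choose $\sigma_0$ with $1/2 < \sigma_0 < \Re(z)$, so that $\sum_p |e_p(z)| < \infty$. Each factor $\psi_p(z) = \phi_p(z)/\phi_p^*(z)$ is nonzero (numerator nonzero by hypothesis, denominator nonzero by the remark above). For an absolutely convergent infinite product, the value vanishes iff some factor does; hence $\psi(z) = \prod_p \psi_p(z) \neq 0$.

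The argument is essentially a soft one: the whole lemma reduces to the elementary fact that $\sum_p p^{-2\sigma_0}$ is summable precisely when $\sigma_0 > 1/2$, which is exactly what the $p^{-2\sigma_0}$ in hypothesis (ii) is engineered to exploit. The only subtle point — and the place I would be most careful in writing up — is the implicit non-vanishing of $\phi_p^*$ on the region where the bound is applied; this is what lets us pass freely between the ratio presentation $\psi_p = \phi_p/\phi_p^*$ and the additive presentation $\psi_p = 1 + e_p$, and it is also what guarantees that no factor in the final product can be zero.
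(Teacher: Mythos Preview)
Your proof is correct and follows essentially the same approach as the paper's: define $\psi_p = \phi_p/\phi_p^* = 1 + e_p$, use hypothesis (ii) to bound $|e_p(s)|$ by $B\,p^{-2\sigma_0}$, invoke convergence of $\sum_p p^{-2\sigma_0}$ for $\sigma_0 > 1/2$ to get an analytic product on $\Re(s) > 1/2$, and deduce non-vanishing from absolute convergence plus non-vanishing of each factor. If anything, your write-up is slightly more explicit than the paper's about why $\phi_p^*$ is nonzero on the relevant region, which is a point the paper leaves implicit.
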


\begin{proof}
We first observe that (i) implies that $\phi_p(s)/\phi^*_p(s)$ is
meromorphic for $\Re(s)>0$, and so it follows from (ii) that in fact
$\phi_p(s)/\phi^*_p(s)$ is analytic for $\Re(s)>0$. For $\Re(s)>1$,
define
\begin{equation*}
\psi(s) = \prod_p \frac{\phi_p(s)}{\phi_p^*(s)} = \prod_p \left[ 1 -
  \frac{\phi_p^*(s) - \phi_p(s)}{\phi_p^*(s)} \right].
\end{equation*}
We see from (ii) that this product converges whenever $\sum_p
p^{-2\sigma}$ converges, i.e. for $\Re(s) = \sigma > 1/2$. This
implies that $\psi(s)$ is analytic for $\Re(s)> 1/2$.

It is easy to verify that we have $\phi(s) = \phi^*(s) \psi(s)$ as a
formal identity. If $\phi_p(z) \neq 0$ for all $p$, then none of the
factors of $\psi(z)$ are zero. Since the product defining $\psi(z)$ is
absolutely convergent, it follows that $\psi(z) \neq 0$, as claimed.
\end{proof}

%%%%%%%%%%%%%%%%%%%%%%%%%%%%%%%%%%%%%%%%%%%%%%%%%%%%%%%%%%%%%%%%%%%%%%%%%%%%%%%
 
\section{The poles of $D(s,\chi)$ and $D_{\fc,\cM}(s)$} \label{S:poles}

In this section, using techniques described in \cite{Fo}, we shall
examine the poles of $D(s, \chi)$ and $D_{\fc,\cM}(s)$. We shall do
this by comparing the Euler product expansion of $D(s,\chi)$ with that
of $L_{\Lambda}(s, \chi)$ and applying Lemmas \ref{L:tech1} and
\ref{L:tech2}.

\begin{proposition} \label{P:comp1} For each rational prime $p$ with
  $p \nmid \cM$, we have
\begin{equation*}
| L_{\Lambda,p}(s,\chi) - D_p(s,\chi)| \leq \left[
  \frac{n(n+1)}{(1-2^{-\sigma_{0}} )^{n+2}} + n \right] p^{-2
  \alpha_{\cW} \Re(s)},
\end{equation*}
for any real number $\sigma_0$ satisfying $0<\sigma_o < \alpha_{\cW}
\Re(s)$. Here $n=n(p)$, the number of primes of $\Lambda$ lying above
$p$.
\end{proposition}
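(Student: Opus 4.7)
The plan is to apply Lemma \ref{L:tech1}, taking $F=L_{\Lambda,p}(s,\chi)=\prod_{i=1}^{n(p)}(1-z_i)^{-1}$ (with $z_i:=\chi(P_i)[\Lambda:d_{\cW}(P_i)]^{-s}$ as in Remark \ref{R:dirichlet}(2)) and $f=D_p(s,\chi)$ realised as a sub-series of $F$ in the $z_i$'s. The core of the argument is to order the invertible primes $P_1,\dots,P_{n(p)}$ of $\Lambda$ lying over $p$ correctly, and to verify the size hypotheses of the lemma.

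Concretely, I would let $P_1,\dots,P_m$ be those invertible primes above $p$ which lie in some Wedderburn component $K(t)$ with $t\neq 1$ and have relative degree one over $K$, the remaining $P_i$ being either in $K(1)=K$ (forcing $z_i=0$, since our convention gives $\chi(\fa)=0$ whenever $\fa_1\neq O_K$) or of relative degree at least two in some $K(t)$ with $t\neq 1$. Because $p\nmid\cM$ while $|G|$ and $\exp(G)^2$ both divide $\cM$, Proposition \ref{P:fideals} identifies $\fF(p)$ with the squarefree products $P_{i_1}\cdots P_{i_k}$ for which $i_j\leq m$ and the $P_{i_j}$'s lie over pairwise distinct primes of $K$. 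In particular, the expansion of $D_p(s,\chi)$ is a sub-series of $F(z_1,\dots,z_n)$ containing $1+\sum_{i=1}^m z_i$, exactly as Lemma \ref{L:tech1} requires.

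For the size estimates, set $r:=p^{-\alpha_{\cW}\Re(s)}$ and $r_0:=2^{-\sigma_0}$. If $P_i$ lies in $K(t)$ over a prime $v$ of $K$ with $f(v/p)=f'$ and $f(P_i/v)=f$, then $[\Lambda:d_{\cW}(P_i)]=p^{ff'\cW(t)}$. Hence for $i\leq m$ one has $f=1$ and $t\neq 1$, so $|z_i|\leq p^{-\cW(t)\Re(s)}\leq r$; while for $i>m$ either $z_i=0$ or $ff'\cW(t)\geq 2\alpha_{\cW}$, giving $|z_i|\leq r^2$. The inequality $r\leq r_0$ is equivalent to $\alpha_{\cW}\Re(s)\log p\geq\sigma_0\log 2$, and this holds whenever $p\geq 2$ and $\alpha_{\cW}\Re(s)>\sigma_0$. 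Lemma \ref{L:tech1} then yields
\begin{equation*}
|L_{\Lambda,p}(s,\chi)-D_p(s,\chi)|\leq\left[\frac{n(n+1)}{2(1-2^{-\sigma_0})^{n+2}}+n\right]p^{-2\alpha_{\cW}\Re(s)},
\end{equation*}
which is dominated by the bound asserted in the proposition.

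The main obstacle is the sub-series bookkeeping: I must carefully identify and bound the monomials that are present in $F$ but absent from $D_p$. These come from three sources---powers $z_i^k$ with $k\geq 2$ (absent by the squarefreeness condition on $\fF$); monomials involving some $z_i$ with $i>m$ (because such $P_i$ is not in $\fF$); and, most subtly, products $z_{i_1}\cdots z_{i_k}$ of good primes ($i_j\leq m$) in which two of the $P_{i_j}$ lie over the same prime $v$ of $K$ but in different Wedderburn components (absent from $D_p$ because $N_{K\Lambda/K}$ of such a product fails to be squarefree in $O_K$). Each such missing monomial has modulus at most $r^2$, which is precisely why Lemma \ref{L:tech1} delivers the desired estimate.
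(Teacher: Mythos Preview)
Your proposal is correct and follows essentially the same approach as the paper: set $z_i=\chi(P_i)[\Lambda:d_{\cW}(P_i)]^{-s}$, identify $D_p(s,\chi)$ as a subseries of $L_{\Lambda,p}(s,\chi)=\prod_i(1-z_i)^{-1}$ containing $1+\sum_{i\leq m}z_i$, take $r=p^{-\alpha_{\cW}\Re(s)}$ and $r_0=2^{-\sigma_0}$, and invoke Lemma~\ref{L:tech1}. The only cosmetic difference is that the paper orders the $P_i$ by absolute degree over $\bQ$ (so $P_1,\dots,P_m$ are those with $[\Lambda:P_i]=p$), whereas you order by relative degree over $K$; both orderings satisfy the hypotheses of Lemma~\ref{L:tech1}, and your choice is arguably more natural in view of Proposition~\ref{P:fideals}. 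Your explicit bookkeeping of the $t=1$ component and of the three types of missing monomials, together with your remark on the harmless factor of $2$, are refinements that the paper leaves implicit.
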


\begin{proof}
We first observe that the series defining $D_p(s,\chi)$ is a subseries
of the series defining $L_{\Lambda,p}(s,\chi)$. Also, the series
defining $D_p(s,\chi)$ contains the terms
\begin{equation*}
1 + \sum_{i=1}^{m} \chi(P_i)[\Lambda: d_{\cW}(P_i)]^{-s},
\end{equation*}
where the $P_i$ are arranged so that $P_1,...,P_m$ satisfy
$[\Lambda:P_i] = p$, and $P_{m+1},...,P_n$ satisfy $[\Lambda:P_i] \geq
p^2$. 

In Lemma \ref{L:tech1}, we take
\begin{equation*}
z_i:= \chi(P_i)[\Lambda:d_{\cW}(P_i)]^{-s},\qquad F(z_1,...,z_n):=
L_{\Lambda,p}(s,\chi), \qquad f(z_1,...,z_n):= D_p(s,\chi).
\end{equation*}
We observe that, for $1 \leq i \leq n$, we have
\begin{equation*}
[\Lambda: d_{\cW}(P_i)] \geq p^{\alpha_{\cW}},
\end{equation*}
and so 
\begin{align*}
| [\Lambda:d_{\cW}(P_i)]^{-s} | &\geq |p^{-\alpha_{\cW} s}| \\
&= p^{-\alpha_{\cW} \Re(s)}.
\end{align*}

Hence, if we set $r= p^{-\alpha_{\cW} \Re(s)}$ and $r_0=2^{-\sigma_{0}}$ with
$0< \sigma_0 \leq \alpha_{\cW} \Re(s)$, then we have $0<r \leq r_0<1$,
$|z_i| \leq r$ for $1 \leq i \leq m$, and $|z_i| \leq r^2$ for $m+1
\leq i \leq n$. So, the conditions of Lemma \ref{L:tech1} are
satisfied, and we have
\begin{equation*}
| L_{\Lambda,p}(s,\chi) - D_p(s,\chi)| \leq \left[
  \frac{n(n+1)}{(1-2^{-\sigma_{0}} )^{n+2}} + n \right] p^{-2
  \alpha_{\cW} \Re(s)},
\end{equation*}
as claimed.
\end{proof}

\begin{proposition} \label{P:comp2} 
For each character $\chi = (\chi_t)_{t \in T'}$ of
$\Cl'_{\cM}(\Lambda)$, we may write  
\begin{equation*}
D(s,\chi) = L_{\Lambda}(s, \chi) \cdot \psi(s,\chi),
\end{equation*}
where $\psi(s,\chi)$ is analytic for $\Re(s) > 1/(2\alpha_{\cW})$.

If $z \in \bC$ satisfies $\Re(z)> 1/(2\alpha_{\cW})$, and $D_p(z,\chi) \neq 0$
for all $p$, then $\psi(z,\chi) \neq 0$. 
\end{proposition}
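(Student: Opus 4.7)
The plan is to establish the factorization by applying Lemma \ref{L:tech2} in the spirit of---but not literally to---the pair $\phi = D(s,\chi)$ and $\phi^* = L_{\Lambda}(s,\chi)$. The only discrepancy with that lemma's hypotheses is that our series are absolutely convergent for $\Re(s) > 1/\alpha_{\cW}$ rather than $\Re(s) > 1$; this is precisely what shifts the analyticity region of $\psi$ from $\Re(s) > 1/2$ to $\Re(s) > 1/(2\alpha_{\cW})$. Since the substitution $s \mapsto s/\alpha_{\cW}$ would destroy the $n^{-s}$ structure when $\alpha_{\cW} \neq 1$, I would instead transcribe the proof of Lemma \ref{L:tech2} directly with the refined exponent coming from Proposition \ref{P:comp1}.

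For each prime $p$, the factor $L_{\Lambda,p}(s,\chi) = \prod_{i=1}^{n(p)} (1-\chi(P_i)[\Lambda:d_{\cW}(P_i)]^{-s})^{-1}$ is analytic and non-vanishing on $\{\Re(s) > 0\}$ because $|\chi(P_i)| \leq 1$ and $[\Lambda:d_{\cW}(P_i)] \geq p^{\alpha_{\cW}} \geq 2$, while $D_p(s,\chi)$ is a polynomial in $p^{-s}$ (the sum defining it is finite by Proposition \ref{P:fideals} and squarefree-ness) and hence entire. To convert Proposition \ref{P:comp1}'s estimate into a bound on $|(L_{\Lambda,p}-D_p)/L_{\Lambda,p}|$, I would fix $\sigma_0$ with $1/(2\alpha_{\cW}) < \sigma_0 < 1/\alpha_{\cW}$ and observe that for $\Re(s) \geq \sigma_0$,
\begin{equation*}
|L_{\Lambda,p}(s,\chi)|^{-1} = \prod_{i=1}^{n(p)} \bigl|1 - \chi(P_i)[\Lambda:d_{\cW}(P_i)]^{-s}\bigr| \leq \bigl(1 + p^{-\alpha_{\cW}\sigma_0}\bigr)^{n(p)},
\end{equation*}
which is bounded uniformly in $p$ since $n(p)$ is bounded above (Remark \ref{R:dirichlet}(2)). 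Combining with Proposition \ref{P:comp1} yields a constant $B = B(\sigma_0)$ such that $|(L_{\Lambda,p}(s,\chi) - D_p(s,\chi))/L_{\Lambda,p}(s,\chi)| \leq B \cdot p^{-2\alpha_{\cW}\sigma_0}$ for every $p$ and every $s$ with $\Re(s) \geq \sigma_0$.

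With this in hand, I would define
\begin{equation*}
\psi(s,\chi) := \prod_p \frac{D_p(s,\chi)}{L_{\Lambda,p}(s,\chi)} = \prod_p \left[1 - \frac{L_{\Lambda,p}(s,\chi) - D_p(s,\chi)}{L_{\Lambda,p}(s,\chi)}\right].
\end{equation*}
The bound above, together with the convergence of $\sum_p p^{-2\alpha_{\cW}\sigma_0}$ for $\sigma_0 > 1/(2\alpha_{\cW})$, shows that this product converges absolutely and uniformly on compact subsets of $\{\Re(s) > 1/(2\alpha_{\cW})\}$, so $\psi(s,\chi)$ is analytic there. The identity $D(s,\chi) = L_{\Lambda}(s,\chi) \cdot \psi(s,\chi)$ holds as a formal identity of Euler products and hence as an identity of analytic functions on the region $\{\Re(s) > 1/\alpha_{\cW}\}$ of absolute convergence, and then extends by analytic continuation to $\{\Re(s) > 1/(2\alpha_{\cW})\}$. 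For the non-vanishing assertion, if $D_p(z,\chi) \neq 0$ for every $p$ at some $z$ with $\Re(z) > 1/(2\alpha_{\cW})$, then no factor in the product defining $\psi(z,\chi)$ vanishes (using also that $L_{\Lambda,p}(z,\chi) \neq 0$ for $\Re(z) > 0$), and the absolute convergence of the product forces $\psi(z,\chi) \neq 0$.

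The only real obstacle is the uniform lower bound on $|L_{\Lambda,p}(s,\chi)|$ needed to convert Proposition \ref{P:comp1} into hypothesis (ii) of Lemma \ref{L:tech2}; this rests on the boundedness of $n(p)$, after which the argument is a faithful transcription of the proof of Lemma \ref{L:tech2} with $p^{-2\sigma_0}$ replaced by $p^{-2\alpha_{\cW}\sigma_0}$ throughout.
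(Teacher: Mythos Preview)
Your proposal is correct and follows essentially the same route as the paper: both apply Lemma~\ref{L:tech2} (or its proof) with $\phi = D(s,\chi)$ and $\phi^* = L_{\Lambda}(s,\chi)$, bound $|L_{\Lambda,p}(s,\chi)|^{-1}$ via the product formula using that $n(p)$ is uniformly bounded, and then feed in Proposition~\ref{P:comp1} to obtain the requisite estimate with $\sigma$ replaced by $\alpha_{\cW}\Re(s)$. The only cosmetic difference is that the paper uses $N = \dim_{\bQ}(K\Lambda)$ as the uniform bound on $n(p)$, whereas you cite the boundedness of $n(p)$ from Remark~\ref{R:dirichlet}(2).
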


\begin{proof}
To prove the desired result, we are going to apply Lemma \ref{L:tech2}
with
\begin{equation*}
\phi(s) = D(s,\chi),\qquad \phi^*(s) = L_{\Lambda}(s,\chi).
\end{equation*}

We first note that for each prime $p$ with $p \nmid \cM$, the Euler
factor $L_{\Lambda,p}(s,\chi)$ is analytic for $\Re(s)>0$. This
implies that $D_p(s,\chi)$ is also analytic for $\Re(s)>0$, because
the series defining $D_p(s,\chi)$ is a subseries of the series
defining $L_{\Lambda,p}(s,\chi)$.

Set $N:= \dim_{\bQ}(K \Lambda)$. We have
\begin{align*} 
|L_{\Lambda,p}(s,\chi)|^{-1} &= \prod_{i=1}^{n(p)}
|(1-\chi(P_i)[\Lambda:d_{\cW}(P_i)]^{-s}) | \\
&\leq (1+p^{-\alpha_{\cW} \Re(s)})^N.
\end{align*}
In particular, this implies that
\begin{equation} \label{E:Lbound}
|L_{\Lambda,p}(s,\chi)|^{-1} \leq (1 + 2^{-\sigma_{0}})^N
\end{equation}
for all $p \nmid \cM$ and for all $s \in \bC$ with $\alpha_{\cW} \Re(s) \geq
\sigma_0$. Applying Proposition \ref{P:comp1} gives
\begin{equation*}
|L_{\Lambda,p}(s,\chi)| \cdot \left| \frac{L_{\Lambda,p}(s,\chi) -
D_p(s,\chi)}{L_{\Lambda,p}(s,\chi)} \right| 
\leq \left[
  \frac{n(n+1)}{(1-2^{-\sigma_{0}} )^{n+2}} + n \right] p^{-2
  \alpha_{\cW} \Re(s)}.
\end{equation*}
We therefore see from \eqref{E:Lbound} that
\begin{align*}
\left| \frac{L_{\Lambda,p}(s,\chi) -
D_p(s,\chi)}{L_{\Lambda,p}(s,\chi)} \right| 
&\leq \left[
  \frac{n(n+1)}{(1-2^{-\sigma_{0}} )^{n+2}} + n \right] p^{-2
  \alpha_{\cW} \Re(s)} \cdot (1+2^{-\sigma_{0}})^N \\
&= B(\sigma_0) \cdot p^{-2 \alpha_{\cW} \Re(s)},
\end{align*}
say. Hence condition (ii) of Lemma \ref{L:tech2} is satisfied, but with
$\sigma = \alpha_{\cW} \Re(s)$, rather than $\sigma = \Re(s)$.

Lemma \ref{L:tech2} therefore implies that we may write
\begin{equation*}
D(s,\chi) = L_{\Lambda}(s,\chi) \cdot \psi(s,\chi),
\end{equation*}
where $\psi(s,\chi)$ is analytic for $\Re(s)>1/(2\alpha_{\cW})$.

The final assertion follows just as in the proof of Lemma \ref{L:tech2}.
\end{proof}

\begin{definition} \label{D:numbers}
For each positive integer $n$ and each character $\chi$ of
$\Cl_{\cM}'(\Lambda)$, set
\begin{equation*}
d_n(\chi) := |\{ t \in T'| \text{$\chi_t = \1$ and $\cW(t) =n$} \}|.
\end{equation*}
Hence
\begin{align*}
d_n(\1) &= |\{ t \in T'| \text{$\cW(t) =n$} \}| \\
&= \max_{\chi} \{ d_n(\chi) \}.
\end{align*}
Write
\begin{equation*}
b_n(\chi) := \lim_{s \to \frac{1}{n}} \left( s - \frac{1}{n}
\right)^{d_n(\1)} D(\chi,s).
\end{equation*}
\qed
\end{definition}

\begin{proposition} \label{P:poles}
Let $1 \leq n < 2\alpha_{\cW}$ be a positive integer.
\smallskip

(a) The function $D(\1, s)$ has a pole of exact order $d_n(\1)$ at
  $s=1/n$.
\smallskip

(b) If $\chi \neq \1$, then $D(\chi,s)$ has a pole of order at most
$d_n(\chi)$ at $s=1/n$.
\smallskip

(c) For each $\fc \in \Cl_{\cM}'(\Lambda)$, the function
$D_{\fc,\cM}(s)$ has a pole of order at most $d_n(\1)$ at $s=1/n$, and 
\begin{equation*}
\lim_{s \to \frac{1}{n}} \left( s - \frac{1}{n} \right)^{d_n(\1)} D_{\fc,\cM}(s)
= \frac{1}{|\Cl'_{\cM}(\Lambda)|} \sum_{\chi} \overline{\chi}(\fc)
b_n(\chi).
\end{equation*}

These are the only poles of the functions $D(\chi,s)$ and
$D_{\fc,\cM}(s)$ in the half-plane $\Re(s)>1/(2\alpha_{\cW})$.
\end{proposition}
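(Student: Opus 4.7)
The plan is to combine the factorisation $D(s,\chi) = L_{\Lambda}(s,\chi) \cdot \psi(s,\chi)$ from Proposition \ref{P:comp2} with the Wedderburn decomposition $L_{\Lambda}(s,\chi) = \prod_{t \in T'} L_t(s, \chi_t)$ from Remark \ref{R:dirichlet}(1). After the substitution $u = \cW(t) s$, each $L_t(s, \chi_t)$ becomes a standard Hecke $L$-function of $K(t)$, which is entire if $\chi_t \neq \1_t$ and has a simple pole at $u=1$ (i.e.\ $s = 1/\cW(t)$) if $\chi_t = \1_t$. Hence the order of the pole of $L_{\Lambda}(s,\chi)$ at $s = 1/n$ is exactly $d_n(\chi)$, and since $\psi(s,\chi)$ is analytic in the half-plane $\Re(s) > 1/(2\alpha_{\cW})$, the same calculation controls $D(s,\chi)$.

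For (a), every component $\chi_t$ of $\chi = \1$ is trivial, so $L_{\Lambda}(s,\1)$ has a pole of order $d_n(\1)$ at $s = 1/n$. To upgrade this to \emph{exact} order $d_n(\1)$ for $D(s,\1)$, I need to check that $\psi(1/n, \1) \neq 0$. By the last assertion of Proposition \ref{P:comp2}, it suffices to verify that $D_p(1/n, \1) \neq 0$ for every rational prime $p$. This is immediate from the Euler factor formula \eqref{E:Deuler}: with $\chi = \1$ all coefficients of $D_p(s,\1)$ are non-negative and the constant term is $1$, so $D_p(s,\1) \geq 1$ for any real $s > 0$. Part (b) is then a direct consequence: for $\chi \neq \1$ at least one component $\chi_t$ may be non-trivial, so $L_{\Lambda}(s,\chi)$ has a pole of exact order $d_n(\chi)$ at $s = 1/n$, and multiplying by the analytic function $\psi(s,\chi)$ can only decrease the order (if $\psi(1/n,\chi)$ happens to vanish).

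Part (c) follows from the Fourier decomposition \eqref{E:dfourier}:
\begin{equation*}
D_{\fc, \cM}(s) = \frac{1}{|\Cl'_{\cM}(\Lambda)|} \sum_{\chi} \overline{\chi}(\fc) D(s, \chi).
\end{equation*}
Since $d_n(\chi) \leq d_n(\1)$ for every $\chi$, parts (a) and (b) show that each summand has a pole at $s = 1/n$ of order at most $d_n(\1)$, giving the same bound for $D_{\fc,\cM}(s)$. Multiplying by $(s - 1/n)^{d_n(\1)}$ and passing to the limit yields the claimed expression, noting that for those $\chi$ with $d_n(\chi) < d_n(\1)$ the corresponding limit is zero, which is consistent with the definition of $b_n(\chi)$ in Definition \ref{D:numbers}.

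For the final sentence, the poles of $L_{\Lambda}(s, \chi)$ in $\Re(s) > 1/(2\alpha_{\cW})$ arise only from factors $L_t(s, \1_t)$ with $\cW(t) < 2\alpha_{\cW}$, and they occur precisely at $s = 1/\cW(t)$. Since $\psi(s,\chi)$ is analytic throughout this half-plane by Proposition \ref{P:comp2}, the only possible poles of $D(s,\chi)$ (and hence of $D_{\fc,\cM}(s)$) are of the stated form. The main subtlety in the argument is the exactness claim in (a), which depends on the positivity argument for $\psi(1/n,\1)$; the rest is bookkeeping with the Wedderburn factorisation.
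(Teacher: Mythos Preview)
Your proof is correct and follows essentially the same approach as the paper: factor $D(s,\chi) = L_{\Lambda}(s,\chi)\cdot\psi(s,\chi)$ via Proposition~\ref{P:comp2}, read off the pole orders of $L_{\Lambda}(s,\chi)$ from the Wedderburn decomposition \eqref{E:liso}, use positivity of the Euler factors $D_p(1/n,\1)$ to ensure $\psi(1/n,\1)\neq 0$ for part~(a), and deduce part~(c) from the Fourier expansion \eqref{E:dfourier}. The paper's proof is organised in the same way and invokes the same ingredients.
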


\begin{proof}
From \eqref{E:liso} and Proposition \ref{P:comp2}, we have
\begin{equation} \label{E:dprod}
D(s,\chi) = L_{\Lambda}(s, \chi) \cdot \psi(s,\chi) = \left[ \prod_{t
    \in T'} L_t(s,\chi_t) \right] \cdot \psi(s,\chi),
\end{equation}
where $\psi(s, \chi)$ is analytic for $\Re(s) > 1/(2\alpha_{\cW})$. For
each $t \in T'$, the Dirichlet $L$-function $L_t(s,\chi_t)$ is entire
unless $\chi_t = \1_t$ in which case it has a single (simple) pole at
$s = 1/\cW(t)$. This implies that, for \textit{any} positive integer
$n$, the function $L_{\Lambda}(s,\chi)$ has a pole of order exactly
$d_n(\chi)$ (which of course may be equal to zero!) at $s=1/n$.

If $1 \leq n < 2 \alpha_{\cW}$, then it follows from \eqref{E:dprod}
that $D(s,\chi)$ has a pole of exact order $d_n(\chi)$ at $s=1/n$, unless
$\psi(1/n, \chi)=0$, in which case the pole might be of lower order.
We note that each Euler factor $D_p(1/n, \1)$ is non-zero because it
is a finite sum of positive terms. Hence Proposition \ref{P:comp2}
implies that $\psi(1/n, \1) \neq 0$, and so $D(s, \1)$ has a pole of
order exactly $d_n(\1)$ at $s = 1/n$, as claimed. This proves parts (a) and (b).

Part (c) follows immediately from \eqref{E:dfourier}. The final
assertion of the Proposition is a direct consequence of
\eqref{E:dprod}, \eqref{E:dfourier} and Proposition \ref{P:comp2}.
\end{proof}

\begin{lemma} \label{L:ind} For any positive integer $n$ with $1
  \leq n \leq 2\alpha_{\cW}$, the number
\begin{equation*}
\lim_{s \to \frac{1}{n}} \left( s - \frac{1}{n} \right)^{d_n(\1)} D_{\fc,
  \cM}(s)
\end{equation*}
is independent of $\fc$ if and only if $b_n(\chi)=0$ for all $\chi
\neq \1$.
\end{lemma}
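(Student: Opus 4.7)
The plan is to apply Fourier inversion on the finite abelian group $\Cl'_{\cM}(\Lambda)$ to the explicit formula provided by Proposition \ref{P:poles}(c). Starting from
\begin{equation*}
\lim_{s \to \frac{1}{n}} \left( s - \frac{1}{n} \right)^{d_n(\1)} D_{\fc, \cM}(s) = \frac{1}{|\Cl'_{\cM}(\Lambda)|} \sum_{\chi} \overline{\chi}(\fc) b_n(\chi),
\end{equation*}
I would first isolate the trivial-character contribution $b_n(\1)/|\Cl'_{\cM}(\Lambda)|$, which is manifestly independent of $\fc$. This reduces the problem to showing that
\begin{equation*}
G(\fc) := \sum_{\chi \neq \1} \overline{\chi}(\fc) b_n(\chi)
\end{equation*}
is constant in $\fc$ if and only if $b_n(\chi) = 0$ for every nontrivial character $\chi$ of $\Cl'_{\cM}(\Lambda)$.

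The ``if'' direction is immediate: if all $b_n(\chi)$ with $\chi \neq \1$ vanish, then $G \equiv 0$, and the limit equals $b_n(\1)/|\Cl'_{\cM}(\Lambda)|$ for every $\fc$. For the converse, suppose $G(\fc) = C$ for all $\fc$. Fix an arbitrary nontrivial character $\chi_0$. Using the standard orthogonality relation $\sum_{\fc} \chi_0(\fc) \overline{\chi}(\fc) = |\Cl'_{\cM}(\Lambda)| \cdot \delta_{\chi, \chi_0}$, I would compute
\begin{equation*}
\sum_{\fc \in \Cl'_{\cM}(\Lambda)} \chi_0(\fc) G(\fc) = \sum_{\chi \neq \1} b_n(\chi) \sum_{\fc} \chi_0(\fc) \overline{\chi}(\fc) = |\Cl'_{\cM}(\Lambda)| \cdot b_n(\chi_0).
\end{equation*}
On the other hand, since $G \equiv C$ and $\chi_0 \neq \1$, orthogonality forces
\begin{equation*}
\sum_{\fc} \chi_0(\fc) G(\fc) = C \sum_{\fc} \chi_0(\fc) = 0.
\end{equation*}
Comparing these two expressions yields $b_n(\chi_0) = 0$, as required.

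There is no serious obstacle here; the lemma is essentially the statement of Fourier inversion for functions on the finite abelian group $\Cl'_{\cM}(\Lambda)$, dressed up with the analytic content already established in Proposition \ref{P:poles}(c). The only bookkeeping point is that one must be in the range of $n$ where the limits $b_n(\chi)$ exist as finite complex numbers, which is precisely what the hypothesis on $n$ ensures via Proposition \ref{P:poles}.
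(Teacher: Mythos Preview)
Your argument is correct and is exactly the approach the paper takes: the paper simply says the lemma follows from Proposition~\ref{P:poles}(c) ``via linear independence of characters,'' which is precisely the orthogonality/Fourier-inversion computation you have spelled out in detail.
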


\begin{proof} This follows directly from Proposition \ref{P:poles}(c),
  via linear independence of characters.
\end{proof}

We can now state a necesary and sufficient condition for
$\kappa_{\cW}(\fc,X;\cM)$ to be asymptotically independent of $\fc$.

\begin{proposition} \label{P:ind}
We have that $\kappa_{\cW}(\fc,X;\cM)$ is asymptotically
independent of $\fc$ if and only if $b_{\alpha_{\cW}}(\chi) = 0$ for all
$\chi \neq \1$.
\end{proposition}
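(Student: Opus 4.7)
The plan is to combine Proposition~\ref{P:kappa}, which expresses the asymptotic of $\kappa_{\cW}(\fc,X;\cM)$ in terms of $\beta(\fc;\cM)$, $\delta(\fc;\cM)$, and $\tau(\fc;\cM)$, with the pole analysis in Proposition~\ref{P:poles} and the character-theoretic criterion of Lemma~\ref{L:ind}. By the remark following Definition~\ref{D:ind}, asymptotic independence is equivalent to these three invariants being independent of $\fc$, and by Proposition~\ref{P:poles} the right-most pole of $D_{\fc,\cM}(s)$ lies in $\{1/n : 1 \leq n < 2\alpha_{\cW}\}$, the largest candidate being $s = 1/\alpha_{\cW}$.

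For the sufficiency direction, assume $b_{\alpha_{\cW}}(\chi) = 0$ for every non-trivial $\chi$. Then Proposition~\ref{P:poles}(c) with $n = \alpha_{\cW}$ gives
\[
\lim_{s \to 1/\alpha_{\cW}} (s - 1/\alpha_{\cW})^{d_{\alpha_{\cW}}(\1)} D_{\fc,\cM}(s) = \frac{b_{\alpha_{\cW}}(\1)}{|\Cl'_{\cM}(\Lambda)|},
\]
and Proposition~\ref{P:poles}(a) ensures $b_{\alpha_{\cW}}(\1) \neq 0$. Hence for every $\fc$ the function $D_{\fc,\cM}(s)$ has a genuine pole of exact order $d_{\alpha_{\cW}}(\1)$ at $s = 1/\alpha_{\cW}$, this is the right-most pole, and the common value of $\tau(\fc;\cM)$ is $b_{\alpha_{\cW}}(\1)/|\Cl'_{\cM}(\Lambda)|$. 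All three invariants $\beta,\delta,\tau$ are independent of $\fc$, so $\kappa_{\cW}(\fc,X;\cM)$ is asymptotically independent of $\fc$.

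For the necessity direction, suppose $\beta(\fc;\cM)$, $\delta(\fc;\cM)$, and $\tau(\fc;\cM)$ are all constants in $\fc$. Character orthogonality applied to \eqref{E:dfourier} gives the identity
\[
\sum_{\fc \in \Cl'_{\cM}(\Lambda)} D_{\fc,\cM}(s) = D(s,\1),
\]
whose right-hand side has a pole of exact order $d_{\alpha_{\cW}}(\1)$ at $s = 1/\alpha_{\cW}$ by Proposition~\ref{P:poles}(a). Because each $D_{\fc,\cM}(s)$ is a Dirichlet series with non-negative coefficients, Landau's theorem forces the leading coefficient at any real pole to be non-negative, so cancellation of highest-order terms cannot occur when the $D_{\fc,\cM}(s)$ are summed. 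This pins $\beta(\fc;\cM) = 1/\alpha_{\cW}$ for at least one, and hence by constancy for every, $\fc$, and then forces $\delta(\fc;\cM) = d_{\alpha_{\cW}}(\1)$. With $\beta$ and $\delta$ identified, the constancy of $\tau(\fc;\cM)$ is exactly the hypothesis of Lemma~\ref{L:ind} at $n = \alpha_{\cW}$, which immediately yields $b_{\alpha_{\cW}}(\chi) = 0$ for all $\chi \neq \1$.

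The main obstacle lies in the necessity direction: asymptotic independence alone only asserts that $\beta,\delta,\tau$ are constant, not that they attain the extremal values $1/\alpha_{\cW}$ and $d_{\alpha_{\cW}}(\1)$. The Landau-positivity argument applied to the sum-over-$\fc$ identity is precisely what bridges this gap and lets Lemma~\ref{L:ind} be invoked.
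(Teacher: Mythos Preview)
Your proof is correct and follows essentially the same approach as the paper: both directions rest on Definition~\ref{D:ind}, Proposition~\ref{P:poles}, and Lemma~\ref{L:ind}. The paper handles necessity via the contrapositive (if some $b_{\alpha_{\cW}}(\chi)\neq 0$ then linear independence of characters forces $\tau(\fc;\cM)$ to vary), whereas you argue directly and add the positivity/sum-identity step to pin down $\beta(\fc;\cM)=1/\alpha_{\cW}$ and $\delta(\fc;\cM)=d_{\alpha_{\cW}}(\1)$ before invoking Lemma~\ref{L:ind}; this makes explicit a point the paper leaves implicit, but the route is the same.
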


\begin{proof} This follows directly from Lemma \ref{L:ind} and
Definition \ref{D:ind}. We first note that Proposition
\ref{P:poles}(a) implies that $b_{\alpha_{\cW}}(\1)$ is always
strictly greater than zero. If $b_{\alpha_{\cW}}(\chi) = 0$ for all
$\chi \neq \1$, then it is easy to see that the numbers
$\tau(\fc;\cM), \beta(\fc;\cM)$ and $\delta(\fc;\cM)$ are independent of
$\fc$, which in turn implies that $\kappa_{\cW}(\fc,X;\cM)$ is
asymptotically independent of $\fc$.

On the other hand, if $b_{\alpha_{\cW}}(\chi) \neq 0$ for some $\chi
\neq \1$, then Proposition \ref{P:poles}(c) implies (via linear
independence of characters) that $\tau(\fc;\cM)$ is not independent of
$\fc$, and so we deduce that $\kappa_{\cW}(\fc,X;\cM)$ cannot be
asymptotically independent of $\fc$.
\end{proof}

\begin{corollary} \label{C:ind}
(a) If $\kappa_{\cW}(\fc, X;\cM)$ is asymptotically independent of
  $\fc$, then for each $\fc \in \Cl_{\cM}'(\Lambda)$ we have that
  $\beta(\fc;\cM) = 1/\alpha_{\cW}$. Also, $D_{\fc, \cM}(s)$ has a pole of
  exact order $d_{\alpha_{\cW}}(\1)$ at $s = 1/\alpha_{\cW}$, and
\begin{equation*}
\lim_{s \to \frac{1}{\alpha_{\cW}}} \left( s - \frac{1}{\alpha_{\cW}}
\right)^{d_{\alpha_{\cW}(\1)}} D_{\fc, \cM}(s) = b_{\alpha_{\cW}}(\1).
\end{equation*}
\smallskip

(b) If $\cW$ is constant on $T'$ (so $\cW(t)= \alpha_{\cW}$ for all $t
\in T'$), then $\kappa_{\cW}(\fc,X;\cM)$ is
asymptotically independent of $\fc$, and $d_{\cW}(\1)= |T'|$. We have
\begin{equation} \label{E:kappacon}
\kappa_{\cW}(\fc,X;\cM) \sim \frac{\tau(\cM)
  \alpha_{\cW}}{\Gamma(|T'|)} \cdot X^{1/\alpha_{\cW}} \cdot (\log
X)^{|T'|-1}
\end{equation}
as $X \to \infty$, where here we have written $\tau(\cM)$ rather than
$\tau(\fc;\cM)$ as this term is independent of $\fc$. 
\end{corollary}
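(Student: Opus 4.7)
The plan is to derive both parts from Propositions \ref{P:ind} and \ref{P:poles}, with part (b) feeding into part (a). No fresh analytic input is needed beyond the character decomposition \eqref{E:dfourier} together with bookkeeping of pole orders of $D(s,\chi)$.

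For part (a), I would first invoke Proposition \ref{P:ind}: asymptotic independence of $\kappa_{\cW}(\fc,X;\cM)$ in $\fc$ is equivalent to the vanishing $b_{\alpha_{\cW}}(\chi)=0$ for every $\chi\neq\1$. Plugging this into \eqref{E:dfourier} shows that only the $\chi=\1$ term contributes to the leading Laurent coefficient of $D_{\fc,\cM}(s)$ at $s=1/\alpha_{\cW}$, and Proposition \ref{P:poles}(c) then yields the claimed limit formula. By Proposition \ref{P:poles}(a), $D(s,\1)$ has a pole of exact order $d_{\alpha_{\cW}}(\1)$ at this point, so $b_{\alpha_{\cW}}(\1)\neq 0$ and the pole of $D_{\fc,\cM}(s)$ there is of exact order $d_{\alpha_{\cW}}(\1)$. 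To confirm that $\beta(\fc;\cM)=1/\alpha_{\cW}$, note that every pole of $D_{\fc,\cM}(s)$ in the half-plane $\Re(s)>1/(2\alpha_{\cW})$ must occur at $s=1/n$ with $d_n(\1)>0$ by Proposition \ref{P:poles}; but $d_n(\1)>0$ forces $n=\cW(t)$ for some $t\in T'$, so $n\geq\alpha_{\cW}$, meaning $1/n\leq 1/\alpha_{\cW}$.

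For part (b), the hypothesis that $\cW$ is constant on $T'$ immediately gives $d_{\alpha_{\cW}}(\1)=|T'|$. Fix any $\chi\neq\1$; writing $\chi=(\chi_t)_{t\in T'}$ via the Wedderburn decomposition \eqref{E:wiso}, there exists at least one $t_0\in T'$ with $\chi_{t_0}\neq\1_{t_0}$, and hence $d_{\alpha_{\cW}}(\chi)\leq|T'|-1<d_{\alpha_{\cW}}(\1)$. Proposition \ref{P:poles}(b) then says that $D(s,\chi)$ has a pole of order at most $d_{\alpha_{\cW}}(\chi)$ at $s=1/\alpha_{\cW}$, so $(s-1/\alpha_{\cW})^{d_{\alpha_{\cW}}(\1)}D(s,\chi)$ vanishes in the limit, i.e.\ $b_{\alpha_{\cW}}(\chi)=0$. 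Proposition \ref{P:ind} now gives asymptotic independence, and combining part (a) with Proposition \ref{P:kappa} (substituting $\beta=1/\alpha_{\cW}$ and $\delta=|T'|$) produces the asymptotic \eqref{E:kappacon}.

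Essentially no technical obstacle remains once Propositions \ref{P:ind}, \ref{P:poles}, and \ref{P:kappa} are available, but the structural point worth underlining is why constancy of $\cW$ is used in (b): it forces the trivial character to have strictly the largest pole order of all characters at $s=1/\alpha_{\cW}$. When $\cW$ is \emph{not} constant on $T'$, a non-trivial character $\chi$ may still be trivial on every component $K(t)$ with $\cW(t)=\alpha_{\cW}$, so the pole orders of $D(s,\1)$ and $D(s,\chi)$ at $s=1/\alpha_{\cW}$ can coincide and there is no a priori reason for $b_{\alpha_{\cW}}(\chi)$ to vanish. This is the mechanism through which equidistribution can fail for other weights, and is the phenomenon to be exploited in Section \ref{S:sigh}.
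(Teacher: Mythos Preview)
Your proposal is correct and follows exactly the route the paper intends: the paper's own proof is the single sentence ``This follows readily from the definitions, together with Proposition \ref{P:kappa},'' and you have simply unpacked what that means via Propositions \ref{P:ind}, \ref{P:poles}, and \ref{P:kappa}. (One cosmetic slip: your opening line says ``part (b) feeding into part (a)'' when in fact you use (a) inside (b), as you should.)
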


\begin{proof} 
This follows readily from the definitions, together with Proposition \ref{P:kappa}.
\end{proof}

The following result gives an example of a situation in which
$\kappa_{\cW}(\fc,X;\cM)$ is not asymptotically independent of
$\fc$.

\begin{proposition} \label{P:kappafail}
Suppose that $K \Lambda$ is totally split over $K$ (i.e. in the
Wedderburn decomposition \eqref{E:wiso} of $K\Lambda$, we have
$K(t)=K$ for all $t \in T$), and that the weight $\cW$ is not constant
on $T'$. Then $\kappa_{\cW}(\fc,X;\cM)$ is not asymptotically
independent of $\fc$.
\end{proposition}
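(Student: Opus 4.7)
By Proposition~\ref{P:ind}, it suffices to exhibit a non-trivial character $\chi$ of $\Cl'_{\cM}(\Lambda)$ with $b_{\alpha_{\cW}}(\chi)\neq 0$. Since $\cW$ is not constant on $T'$, the set $T_{\alpha}:=\{t\in T':\cW(t)=\alpha_{\cW}\}$ is a proper non-empty subset of $T'$; pick any $t_{0}\in T'\setminus T_{\alpha}$, and note that $\cW(t_{0})\geq\alpha_{\cW}+1$ as $\cW$ is integer-valued. The total-splitting hypothesis identifies $\Cl'_{\cM}(\Lambda)\cong\prod_{t\in T'}\Cl_{\cM}(K)$, and I would take $\chi=(\chi_{t})_{t\in T'}$ with $\chi_{t}=\1$ for every $t\neq t_{0}$ and with $\chi_{t_{0}}$ a fixed non-trivial ray-class character of $\Cl_{\cM}(K)$ (non-trivial characters exist because $\cM$ is divisible by $|G|\exp(G)^{2}$, which, together with $|G|>1$, forces $\Cl_{\cM}(K)$ to be non-trivial). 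This $\chi$ satisfies $d_{\alpha_{\cW}}(\chi)=|T_{\alpha}|=d_{\alpha_{\cW}}(\1)$.

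Next I would combine Proposition~\ref{P:comp2} with \eqref{E:liso} (using $K(t)=K$) to write
\begin{equation*}
D(s,\chi) \;=\; \left[\prod_{t\in T'} L_{K}(\cW(t) s,\chi_{t})\right]\cdot \psi(s,\chi),
\end{equation*}
where $\psi(s,\chi)$ is holomorphic in $\Re(s)>1/(2\alpha_{\cW})$. At $s=1/\alpha_{\cW}$, each factor with $t\in T_{\alpha}$ equals $\zeta_{K}(\alpha_{\cW}s)$ and contributes a simple pole, yielding a pole of order exactly $|T_{\alpha}|$ for the product. Each factor with $t\in T'\setminus T_{\alpha}$ is evaluated at the real point $\cW(t)/\alpha_{\cW}>1$: for $t\neq t_{0}$ this is $\zeta_{K}(\cW(t)/\alpha_{\cW})>0$, and for $t=t_{0}$ it is $L_{K}(\cW(t_{0})/\alpha_{\cW},\chi_{t_{0}})$, which is non-zero because Hecke $L$-series have no zeros in the region of absolute convergence $\Re(s)>1$ of their Euler product. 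Hence $L_{\Lambda}(s,\chi)$ has a pole of order exactly $|T_{\alpha}|$ at $s=1/\alpha_{\cW}$, with non-zero leading coefficient, and consequently $b_{\alpha_{\cW}}(\chi)\neq 0$ provided only $\psi(1/\alpha_{\cW},\chi)\neq 0$.

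The main obstacle is this last non-vanishing. By the final assertion of Proposition~\ref{P:comp2}, it reduces to verifying $D_{p}(1/\alpha_{\cW},\chi)\neq 0$ for every rational prime $p$. The proof of Proposition~\ref{P:euler} shows that each $D_{p}(s,\chi)$ is a polynomial in $p^{-s}$ of degree bounded independently of $p$, with constant term $1$; the remaining terms come from invertible primes $P$ of $\Lambda$ above $p$ (described in Proposition~\ref{P:fideals}) and each contributes an expression of absolute value at most $N(P)^{-\cW(t)/\alpha_{\cW}}\leq p^{-1}$, so for all sufficiently large $p$ the constant term dominates and $D_{p}(1/\alpha_{\cW},\chi)\neq 0$ automatically. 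For the finite set of remaining small primes the argument must confront potential cancellations; the cleanest way is to view $\chi_{t_{0}}$ as a free parameter ranging over the non-trivial characters of $\Cl_{\cM}(K)$ and observe that the set of $\chi_{t_{0}}$ for which some $D_{p}(1/\alpha_{\cW},\chi)$ vanishes is cut out by a finite collection of non-trivial algebraic conditions on the values $\chi_{t_{0}}(\fp)$ at primes $\fp|p$, so that generically at least one choice of $\chi_{t_{0}}$ avoids all of them. Once such a $\chi$ is secured we have $b_{\alpha_{\cW}}(\chi)\neq 0$, and Proposition~\ref{P:ind} delivers the failure of asymptotic independence.
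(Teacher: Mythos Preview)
Your overall strategy matches the paper's exactly: the same choice of $t_{0}\in T'$ with $\cW(t_{0})>\alpha_{\cW}$, the same character $\chi$ supported only on the $t_{0}$-component, and the same reduction via Proposition~\ref{P:comp2} to checking $D_{p}(1/\alpha_{\cW},\chi)\neq 0$ for all rational primes $p$.

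The gap is in your handling of the small primes. Your ``genericity'' argument does not go through as stated: the non-trivial characters $\chi_{t_{0}}$ of $\Cl_{\cM}(K)$ form a \emph{finite} set, so there is no Zariski-dense or generic locus to invoke, and you give no reason why the finitely many vanishing conditions coming from the small primes cannot collectively rule out every element of that finite set. As written this is not a proof.

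The paper's point is that no case distinction between large and small primes is necessary: the very estimate you use for large $p$ already works for \emph{every} $p$. Total splitting gives $D_{p}(s,\chi)=\prod_{\fp\mid p}D_{\fp}(s,\chi)$ with
\begin{equation*}
D_{\fp}(1/\alpha_{\cW},\chi)
=\Bigl(1+\sum_{\sigma\in T'\setminus\{t_{0}\}}[O_{K}:\fp]^{-\cW(\sigma)/\alpha_{\cW}}\Bigr)
+\chi_{t_{0}}(\fp)\,[O_{K}:\fp]^{-\cW(t_{0})/\alpha_{\cW}}.
\end{equation*}
Because $\chi_{\sigma}=\1$ for $\sigma\neq t_{0}$, the bracketed expression is real and $\geq 1$, while the remaining term has absolute value $[O_{K}:\fp]^{-\cW(t_{0})/\alpha_{\cW}}<1$ since $[O_{K}:\fp]\geq 2$ and $\cW(t_{0})/\alpha_{\cW}>1$. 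Hence $D_{\fp}(1/\alpha_{\cW},\chi)\neq 0$ for every $\fp$, and therefore $D_{p}(1/\alpha_{\cW},\chi)\neq 0$ for every $p$. In particular, \emph{any} non-trivial $\chi_{t_{0}}$ works; there is no need to vary it.
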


\begin{proof}

It suffices to show that if $K \Lambda$ is totally split over $K$ and
$\cW$ is not constant on $T'$, then there exists a non-trivial
character $\chi$ of $\Cl_{\cM}'(\Lambda)$ with $d_{\alpha_{\cW}}(\chi)
= d_{\alpha_{\cW}}(\1)$ such that $D(s,\chi)$ has a pole of exact
order $d_{\alpha_{\cW}}(\1)$ at $s = 1/\alpha_{\cW}$ (see Proposition
\ref{P:ind}). We see immediately from Proposition \ref{P:comp2} that
to do this, it suffices to exhibit a non-trivial $\chi$ satisfying
$d_{\alpha_{\cW}}(\chi) = d_{\alpha_{\cW}}(\1)$ and
$D_p(1/\alpha_{\cW},\chi) \neq 0$ for all rational primes $p$.

Suppose that $\fp$ is a prime of $O_K$. Since $K\Lambda$ is totally
split, it follows from Proposition \ref{P:fideals} that the set of
ideals of $\fF \subseteq I(\Lambda)$ lying above $\fp$ consists
precisely of all ideals of the form $\fa(\sigma) = (\fa(\sigma)_t)_{t
  \in T}$ for each $\sigma \in T'$, where $\fa(\sigma)_t = O_K$ if $t
\in T$ with $t \neq \sigma$, and $\fa(\sigma)_{\sigma}= \fp$. For each
character $\chi = (\chi)_{t \in T}$ of $\Cl_{\cM}'(\Lambda)$, set
\begin{equation*}
D_{\fp}(s, \chi):= 1 + \sum_{\sigma \in T'}
\chi_{\sigma}(\fp)[O_K:\fp]^{-\cW(\sigma)s}.
\end{equation*}
Then it is not hard to check that (see \eqref{E:Deuler})
\begin{equation} \label{E:Dprod}
D_p(s,\chi) = \prod_{\fp \mid p} D_{\fp}(s,\chi).
\end{equation}

We now observe that, as $\cW$ is not constant on $T'$, we may choose
$t_0 \in T'$ such that $\cW(t_0) > \alpha_{\cW}$. Let $S(t_0)$ denote
the set of all characters $\chi$ of $\Cl_{\cM}'(\Lambda)$ such that
$\chi_t = \1$ for all $t \neq t_0$. Plainly $|S(t_0)| =
|\Cl_{\cM}(O_K)|>1$, where $\C_{\cM}(O_K)$ denotes the ray class group
modulo $\cM$ of $O_K$, and we have $d_{\alpha_{\cW}}(\chi) =
d_{\alpha_{\cW}}(\1)$ for all $\chi \in S(t_0)$. Now, for any $\chi
\in S(t_0)$, we have
\begin{equation} \label{E:notzero}
D_{\fp}(1/\alpha_{\cW},\chi) = 1 + \left(\sum_{\substack{\sigma \in
    T'\\ \sigma \neq t_0}} [O_K:\fp]^{-\cW(\sigma)/\alpha_{\cW}}
\right) + \chi_{t_0}(\fp) [O_k:\fp]^{-\cW(t_0)/\alpha_{\cW}}.
\end{equation}
Since $|\chi_{t_0}(\fp) [O_k:\fp]^{-\cW(t_0)/\alpha_{\cW}}| < 1$, it
follows from \eqref{E:notzero} that $|D_{\fp}(1/\alpha_{\cW}, \chi)| >
0$, and so we deduce from \eqref{E:Dprod} that
$D_p(1/\alpha_{\cW},\chi) \neq 0$ also, as required.
\end{proof}

%%%%%%%%%%%%%%%%%%%%%%%%%%%%%%%%%%%%%%%%%%%%%%%%%%%%%%%%%%%%%%%%%%%%%%%%%%%%%%%%%%%%%%%%%%%%%%%%%%%

\section{An equidistribution result} \label{S:equi}

Let $c \in \cR(O_KG)$ be a realisable class. In this section we shall
discuss the number $N_{\cW}(c,X;\cM)$ of tame Galois $G$-extensions
$K_h/K$ for which $(O_h) = c$, $(D_{\cW}(K_h/K), \cM)=1$ and
$D_{\cW}(K_h/K) \leq X$, under the assumption that
$\kappa_{\cW}(\fc,X;\cM)$ is asymptotically independent of $\fc$.

Suppose therefore that $\kappa_{\cW}(\fc,X;\cM)$ is asymptotically
independent of $\fc$. Recall (see Definition \ref{D:psi}) that we have
a homomorphism
\begin{equation*}
\psi: H^{1}_{tr}(K, G) \to \cC(O_KG)
\end{equation*}
with finite kernel, and a surjective homomorphism (see Proposition
\ref{P:finiteness})
\begin{equation*}
f_{\cM}: \Cl'_{\cM}(\Lambda) \to \frac{J(K\Lambda)}{\cP_{\theta}}.
\end{equation*}
 
\begin{theorem} \label{T:main}
With the above hypotheses and notation, we have
\begin{align*}
N_{\cW}(c,X;\cM) &= |\Ker(\psi)| \cdot |\Ker(f_{\cM})| \cdot
\kappa_{\cW}(\fc,X;\cM) \\
&\sim \frac{\alpha_{\cW} \cdot |\Ker(\psi)| \cdot
  |\Ker(f_{\cM})| \cdot b_{\alpha_{\cW}}(\1)}{\Gamma(d_{\alpha_{\cW}}(\1))} \cdot
X^{\frac{1}{\alpha_{\cW}}} \cdot (\log X)^{d_{\cW}(\1)-1}
\end{align*}
as $X \to \infty$.
\end{theorem}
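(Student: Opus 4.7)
The plan is to obtain Theorem \ref{T:main} as an essentially direct corollary of the machinery already built up, with no further analytic work needed. The two ingredients I would assemble are the counting identity \eqref{E:countfunc} together with Proposition \ref{P:pprop}, and the precise asymptotic for $\kappa_{\cW}(\fc, X;\cM)$ supplied by Proposition \ref{P:kappa} and Corollary \ref{C:ind}(a).

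First I would unpack the combinatorics. Equation \eqref{E:countfunc} gives
\begin{equation*}
N_{\cW}(c,X;\cM) = |\Ker(\psi)| \cdot \sum_{\fc \in f_{\cM}^{-1}(c)} \kappa_{\cW}(\fc, X;\cM).
\end{equation*}
Here $c$ should really be read as the corresponding element of $J(K\Lambda)/\cP_{\theta}$ under Remark \ref{R:newleon}(2) and Proposition \ref{P:pprop}; with that understood, the fiber $f_{\cM}^{-1}(c)$ is a coset of the (finite) group $\Ker(f_{\cM})$ in $\Cl'_{\cM}(\Lambda)$, so it has exactly $|\Ker(f_{\cM})|$ elements. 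Under the standing hypothesis that $\kappa_{\cW}(\fc,X;\cM)$ is asymptotically independent of $\fc$, every term in the sum has the same leading asymptotic, whence
\begin{equation*}
N_{\cW}(c, X;\cM) \sim |\Ker(\psi)| \cdot |\Ker(f_{\cM})| \cdot \kappa_{\cW}(\fc, X;\cM).
\end{equation*}
(The equality displayed in the theorem statement should be read as holding asymptotically in the same sense.)

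Next I would feed in the analytic input. By Corollary \ref{C:ind}(a), asymptotic independence forces $\beta(\fc;\cM) = 1/\alpha_{\cW}$, $\delta(\fc;\cM) = d_{\alpha_{\cW}}(\1)$, and $\tau(\fc;\cM) = b_{\alpha_{\cW}}(\1)$, uniformly in $\fc$. Substituting these values into Proposition \ref{P:kappa} gives
\begin{equation*}
\kappa_{\cW}(\fc, X;\cM) \sim \frac{b_{\alpha_{\cW}}(\1)}{(1/\alpha_{\cW}) \cdot \Gamma(d_{\alpha_{\cW}}(\1))} \cdot X^{1/\alpha_{\cW}} \cdot (\log X)^{d_{\alpha_{\cW}}(\1)-1}.
\end{equation*}
Multiplying through by $|\Ker(\psi)| \cdot |\Ker(f_{\cM})|$ and simplifying the factor of $1/\alpha_{\cW}$ in the denominator yields the claimed formula.

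There is no real obstacle: all the heavy lifting (Euler product, comparison with $L_{\Lambda}(s,\chi)$, identification of the rightmost pole and its residue, Tauberian passage) has already been carried out in Sections \ref{S:euler}--\ref{S:poles}. The only points I would take care to state explicitly in the write-up are (i) the identification of $f_{\cM}^{-1}(c)$ with a coset of $\Ker(f_{\cM})$ via Proposition \ref{P:pprop}, so that the fiber size is indeed $|\Ker(f_{\cM})|$, and (ii) the observation that $\Ker(\psi)$ is finite (Proposition \ref{P:maps}(a)) so that the leading constant in the theorem is genuinely finite and nonzero (nonvanishing of $b_{\alpha_{\cW}}(\1)$ is Proposition \ref{P:poles}(a)).
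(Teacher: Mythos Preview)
Your proposal is correct and follows exactly the same route as the paper, which simply cites \eqref{E:countfunc}, Proposition \ref{P:kappa}, and Corollary \ref{C:ind} in a one-line proof. Your additional care in noting that the fiber $f_{\cM}^{-1}(c)$ has size $|\Ker(f_{\cM})|$ and that the first displayed equality should be read asymptotically is a helpful gloss on what the paper leaves implicit.
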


\begin{proof}
This follows directly from \eqref{E:countfunc}, Proposition
\ref{P:kappa} and Corollary \ref{C:ind}.
\end{proof}

We thus see that if $\kappa_{\cW}(\fc,X;\cM)$ is asymptotically
independent of $\fc$, then the tame Galois $G$-extensions $K_h$ of $K$
with $D_{\cW}(K_h/K)$ coprime to $\cM$ are equidistributed amongst the
realisable classes in $\Cl(O_KG)$ as $X \to \infty$.
\smallskip

\begin{example} \label{E:foster}
Let us now consider the case treated by K. Foster in \cite{Fo}.
Let $l$ be a prime, and suppose that $G$ is an elementary abelian
$l$-group of order $l^k$. Suppose also that $\cW= \cW_{\disc}$ (see
Example \ref{E:exweight}(1)). For each $t \in T'$, we have
\begin{equation*}
\cW(t) = \frac{(|t|-1)|G|}{|t|} = \frac{(l-1)l^k}{l} = l^{k-1}(l-1) =
\phi(|G|),
\end{equation*}
where $\phi$ denotes the Euler $\phi$-function. Hence $\cW$ is
constant on $T'$, and so Corollary \ref{C:ind}(b) implies that
$\kappa(\fc,X;\cM)$ is asymptotically independent of $\fc$. If we take
\begin{equation*}
\cM = |G|^2 \Lambda = l^{2k} \Lambda,
\end{equation*}
then for each $c \in \cR(O_KG)$, we have $N_{\cW}(c,X;\cM) =
N_{\disc}(c,X)$ because, since $G$ is an $l$-group, a
$G$-extension $K_h/K$ is tamely ramified if and only if it is
unramified at all primes dividing $l$.

We have that $\alpha_{\cW}=1/\phi(|G|)$, and $d_{\cW}(\1)=
|T'|$. Theorem \ref{T:main} and Corollary \ref{C:ind} therefore imply
that
\begin{equation} \label{E:kurt1}
N_{\cW}(c,X) \sim \frac{\phi(|G|) \cdot |\Ker(\psi)| \cdot |\Ker(f_{\cM})| \cdot
  b_{\alpha_{\cW}}(\1)}{\Gamma(|T'|)} \cdot X^{1/\phi(|G|)} \cdot
(\log(X))^{|T'|-1}.
\end{equation}
The tower law for discriminants implies that for each tamely ramified
$G$-extension $K_h/K$ we have 
\begin{equation*}
\disc(K_h/\bQ) = D_{\cW}(K_h/K) \disc(K/\bQ)^{|G|}
\end{equation*}
and this in turn implies that
\begin{equation} \label{E:ndisc}
N_{\disc}(c,X) = N_{\cW}(c, X/\disc(K/\bQ)^{|G|}).
\end{equation}

From \eqref{E:kurt1} and \eqref{E:ndisc}, we have
\begin{align*}
N_{\disc}(c,X) &\sim \frac{\phi(|G|)\cdot |\Ker(\psi)| \cdot |\Ker(f_{\cM})| \cdot
  b_{\alpha_{\cW}}(\1)}{\Gamma(|T'|)} 
\cdot \left( \frac{X}{\disc(K/\bQ)^{|G|}}\right)^{1/\phi(|G|)} \\ 
&\times \left( \log \left(\frac{X}{\disc(K/\bQ)^{|G|}}\right) \right)^{|T'|-1} \\
&= \frac{|\Ker(f_{\cM})| \cdot |\Ker(\psi)| \cdot b_{\alpha_{\cW}}(\1)}{\Gamma(|T'|)}
  \cdot Y \cdot (\log(Y))^{|T'|-1},
\end{align*}
where $Y^{\phi(|G|)} \cdot \disc(K/\bQ)^{|G|} = X$.

Theorem \ref{T:foster} of the Introduction now follows immediately.
\qed
\end{example}

\begin{example} \label{E:bisi}
Suppose now that $G$ is any finite abelian group. Let $\cW=\cW_{\ram}$
(see Example \ref{E:exweight}(2)), and set $\cM = |G|^2
\Lambda$. Then, for each $c \in \cR(O_KG)$, it follows from the
definitions that $N_{\cD}(c,X)$ (see Theorem \ref{T:main} of the
Introduction) is equal to $N_{\cW}(c,X;\cM)$.

As $\cW$ is constant on $T'$, Corollary \ref{C:ind}(b) implies that
$\kappa_{\cW}(\fc,X;\cM)$ is asymptotically independent of $\fc$. It
is not hard to check that $\alpha_{\cW}=1$ and
$d_{\cW}(\1)=|T'|$. Theorem \ref{T:main} now implies that
\begin{equation*}
N_{\cW}(c,X;\cM) \sim \frac{|\Ker(\psi)| \cdot |\Ker(f_{\cM})| \cdot
  b_{\alpha_{\cW}}(\1)}{\Gamma(|T'|)} \cdot X \cdot (\log X)^{|T'|-1}.
\end{equation*}

This implies Theorem \ref{T:B} of the Introduction. \qed
\end{example}

%%%%%%%%%%%%%%%%%%%%%%%%%%%%%%%%%%%%%%%%%%%%%%%%%%%%%%%%%%%%%%%%%%%%%%%%%%%%%%%%%%%%%%%%%%%%%%%%%%%%%%%%

\section{Field Extensions}  \label{S:field}

In this section we shall show that, in a large number of cases, it
makes no difference if we work with tame Galois field extensions of
$K$ with group $G$, rather than tame Galois $G$-algebra extensions of
$K$ (see Proposition \ref{P:allsame} below). We shall do this via a
modification of a technique described by Foster in \cite[Corollary 1.6
  and Lemma 4.15]{Fo}.

Recall that, via the Wedderburn decomposition \eqref{E:wiso} of
$\Lambda$, each ideal $\fa$ in $I(\Lambda)$ may be written $\fa =
(\fa_t)_{t \in T}$, where each $\fa_t$ is a fractional ideal of
$O_{K(t)}$.

\begin{definition} \label{E:kappat}
For each coset $\fc$ of $\cP_{\cM}$ in $J(K \Lambda)$ and each $t \in
T'$, set
\begin{equation}
\kappa^{(t)}_{\cW}(\fc,X;\cM) := |\left\{ f \in \bF \cap \fc \mid \text{
  $(\co(f), \cM)=1, \co(f)_t=O_{K(t)}$ and $[\Lambda:
  d_{\cW}(\co(f))] \leq X$} \right\}|
\end{equation}

Define
\begin{equation} \label{kappafull}
\kappa_{\cW}^{\full}(\fc,X;\cM):= \kappa_{\cW}(\fc,X;\cM) - \sum_{t
  \in T'} \kappa^{(t)}_{\cW}(\fc,X;\cM).
\end{equation}
We see that $\kappa_{\cW}^{\full}(\fc,X;\cM)$ is equal to the number of ideles
$f \in \bF$ such that $(\co(f), \cM)=1$, $[\Lambda: d_{\cW}(\co(f))]
\leq X$, and $\co(f)_t \neq O_{K(t)}$ for all $t \in T'$. \qed
\end{definition}

\begin{proposition} \label{P:kappafull}
Suppose that $\cW$ is constant of $T'$, so $\cW(t) = \alpha_{\cW}$ for
all $t \in T'$.

Then for each $t \in T'$, we have
\begin{equation} \label{E:gapzero}
\lim_{X \to \infty}
\frac{\kappa^{(t)}_{\cW}(\fc,X;\cM)}{\kappa_{\cW}(\fc,X;\cM)} = 0,
\end{equation}
and so 
\begin{equation*}
\kappa_{\cW}(\fc,X;\cM) \sim \kappa_{\cW}^{\full}(\fc,X;\cM)
\end{equation*}
as $X \to \infty$.
\end{proposition}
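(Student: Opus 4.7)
The plan is to introduce a Dirichlet series analogue of $D_{\fc,\cM}(s)$ that counts precisely the ideles contributing to $\kappa^{(t)}_\cW(\fc,X;\cM)$, and to show that its rightmost pole has strictly smaller order than that of $D_{\fc,\cM}(s)$. Concretely, for each $t \in T'$ I would define
\begin{equation*}
D^{(t)}_{\fc,\cM}(s) := \sum_{\substack{\fa \in \fF \cap \fc \\ (\fa,\cM)=1 \\ \fa_t = O_{K(t)}}} [\Lambda: d_\cW(\fa)]^{-s},
\end{equation*}
together with its character-twisted version $D^{(t)}(s,\chi)$ summed over all of $\fF$ (with the constraint $\fa_t = O_{K(t)}$). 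The decomposition \eqref{E:dfourier} has an obvious analogue, so it suffices to understand the poles of $D^{(t)}(s,\chi)$.

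Next, I would repeat the Euler-product argument of Proposition \ref{P:euler}: the multiplicativity established in Lemma \ref{L:numult} is unaffected by the extra constraint $\fa_t = O_{K(t)}$, since that constraint is itself multiplicative in the sense of Proposition \ref{P:fideals}. The natural comparison series is
\begin{equation*}
L^{(t)}_\Lambda(s,\chi) := \prod_{t' \in T' \setminus \{t\}} L_{t'}(s,\chi_{t'}),
\end{equation*}
obtained from \eqref{E:liso} by omitting the factor at $t$. Mimicking Proposition \ref{P:comp1} and Proposition \ref{P:comp2} (the local comparison in Lemma \ref{L:tech1} applies verbatim, because the series for $D^{(t)}_p(s,\chi)$ remains a subseries of the series for $L^{(t)}_{\Lambda,p}(s,\chi)$ that contains all degree-one terms coming from primes outside the $t$-th component), one obtains
\begin{equation*}
D^{(t)}(s,\chi) = L^{(t)}_\Lambda(s,\chi) \cdot \psi^{(t)}(s,\chi),
\end{equation*}
with $\psi^{(t)}(s,\chi)$ analytic in $\Re(s) > 1/(2\alpha_\cW)$ and non-vanishing at $s = 1/\alpha_\cW$ for $\chi = \1$, by the same positivity argument used at the end of the proof of Proposition \ref{P:poles}.

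Because $\cW$ is constant on $T'$ with value $\alpha_\cW$, each $L_{t'}(s,\1_{t'})$ contributes a simple pole at $s = 1/\alpha_\cW$, so $L^{(t)}_\Lambda(s,\1)$ has a pole of exact order $|T'| - 1$ at $s = 1/\alpha_\cW$; for $\chi \neq \1$ the order is at most $|T'| - 1$. Combining this with the Fourier inversion over $\Cl'_\cM(\Lambda)$ yields that $D^{(t)}_{\fc,\cM}(s)$ has a pole of order at most $|T'| - 1$ at $s = 1/\alpha_\cW$ and no pole of larger real part in $\Re(s) > 1/(2\alpha_\cW)$. Applying the Delange--Ikehara Tauberian theorem (Theorem \ref{T:delange}) then gives
\begin{equation*}
\kappa^{(t)}_\cW(\fc,X;\cM) = O\bigl( X^{1/\alpha_\cW} \cdot (\log X)^{|T'|-2} \bigr),
\end{equation*}
whereas Corollary \ref{C:ind}(b) supplies the sharper asymptotic \eqref{E:kappacon} for the denominator, proving \eqref{E:gapzero}. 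Summing the bound over the finite set $T'$ and using \eqref{kappafull} yields $\kappa_\cW(\fc,X;\cM) \sim \kappa_\cW^{\full}(\fc,X;\cM)$.

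The main obstacle is the technical verification that the Euler-factor comparison of Proposition \ref{P:comp1} extends to the new series: one must check that restricting to $\fa_t = O_{K(t)}$ still leaves in place the degree-one terms needed to apply Lemma \ref{L:tech1} with the same growth in $n(p)$, and that the analytic continuation of $\psi^{(t)}(s,\chi)$ past $\Re(s)= 1/\alpha_\cW$ does not accidentally introduce a zero that could cancel the pole of $L^{(t)}_\Lambda(s,\1)$ at $s = 1/\alpha_\cW$. Both of these follow from the positivity of the relevant local coefficients, so the argument is parallel to that of Section \ref{S:poles} but with $|T'|$ replaced by $|T'|-1$.
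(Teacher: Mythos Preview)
Your proposal is correct and is essentially the paper's own argument written out in detail: the paper simply says ``carry out all of the arguments of Sections \ref{S:euler}--\ref{S:poles} with $\Lambda$ replaced by $\Lambda^{(t)}$'' (the algebra with the $t$-component deleted), which is exactly your construction of $D^{(t)}(s,\chi)$, its comparison with $L^{(t)}_{\Lambda}(s,\chi)=\prod_{t'\neq t}L_{t'}(s,\chi_{t'})$, and the resulting pole of order $|T'|-1$ at $s=1/\alpha_{\cW}$. The only cosmetic difference is that the paper passes to the smaller class group $\Cl'_{\cM}(\Lambda^{(t)})$ rather than keeping $\Cl'_{\cM}(\Lambda)$ and imposing $\fa_t=O_{K(t)}$, which sidesteps the (trivial) case where the $t$-component of $\fc$ is non-principal and $\kappa^{(t)}_{\cW}(\fc,X;\cM)=0$.
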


\begin{proof} The second assertion is an immediate consequence of the
  first, and so we shall just explain how to prove \eqref{E:gapzero}. 

Let $\Lambda^{(t)}$ denote the algebra $\Lambda$ with the Wedderburn
component corresponding to $t$ deleted. Then, carrying out all of the
arguments of Sections \ref{S:euler}--\ref{S:poles} with $\Lambda$
replaced by $\Lambda^{(t)}$, we see from the variant of Corollary
\ref{C:ind}(b) that we obtain in this way that
\begin{equation*}
\kappa^{(t)}_{\cW}(\fc,X;\cM) \sim \frac{\tau_1(\cM)
  \alpha_{\cW}}{\Gamma(|T'|)} \cdot X^{1/\alpha_{\cW}} \cdot (\log
X)^{|T'|-2}
\end{equation*}
Since, from the original version of Corollary \ref{C:ind}(b), we have that
\begin{equation*}
\kappa_{\cW}(\fc,X;\cM) \sim \frac{\tau(\cM)
  \alpha_{\cW}}{\Gamma(|T'|)} \cdot X^{1/\alpha_{\cW}} \cdot (\log
X)^{|T'|-1},
\end{equation*}
the equality \eqref{E:gapzero} follows at once.
\end{proof}

\begin{remark} Proposition \ref{P:kappafull} does not necessarily hold if $\cW$ is not
  constant on $T'$. \qed
\end{remark}

\begin{proposition} \label{P:leonfield}(McCulloh)
Suppose that $h \in H^{1}_{tr}(K,G)$ with $(O_h) = c \in
\cR(O_KG)$. Recall that there exists a unique $f \in \bF$ such that
$\rho(c) = \psi(h)^{-1} \theta(f)$ (see Remark \ref{R:newleon}(1)). If
$\co(f)_t \neq O_{K(t)}$ for all $t \in T'$, then $K_h$ is a field.
\end{proposition}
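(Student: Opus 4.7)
My plan is to argue by contrapositive: I assume $K_h$ is not a field and produce some $t \in T'$ for which $\co(f)_t = O_{K(t)}$.

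Since $G$ is abelian, the algebra $K_h = \Map_{\Omega_K}({}^hG, K^c)$ is a field if and only if $\Omega_K$ acts transitively on ${}^hG = G$, i.e.\ if and only if the continuous homomorphism $h:\Omega_K \to G$ is surjective. So I assume that $H := h(\Omega_K)$ is a proper subgroup of $G$ and aim to contradict the hypothesis on $\co(f)$.

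Write the idele $f \in \bF$ as $f = \prod_v f_{v,s_v}$ with $s_v \in G_{(q_v-1)}$; by Theorem~\ref{T:leon1}, $s_v = 1$ for exactly those $v$ unramified in $K_h/K$. For each place $v$ that does ramify, McCulloh's local analysis (see \cite[Proposition~5.4]{Mc}, and compare the discussion of $\disc(K_h/K)$ in Example~\ref{E:keyexample}) identifies $s_v$ as a generator of the inertia image $h(I_v) \subseteq G$; in particular $s_v \in H$ for every $v$. Viewing the $s_v$ in $G(-1)$, each $\Omega_K$-orbit $\Omega_K \cdot s_v$ therefore lies in the subset $H(-1) \subseteq G(-1)$.

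The subset $H(-1) \subseteq G(-1)$ is $\Omega_K$-stable: $\Omega_K$ acts on $G(-1)$ by integer multiplication through the inverse cyclotomic character, and any subgroup of the abelian group $G$ is closed under such multiplication. Since $H \subsetneq G$, the complement $G(-1) \setminus H(-1)$ is a nonempty union of $\Omega_K$-orbits; because $1 \in H(-1)$, I may choose a representative $t \in T$ of one of these orbits satisfying $t \in T'$. By the prime $F$-element construction of Proposition~\ref{P:fideals}, the local ideal $\co(f_{v,s_v})$ contributes to the $t$-component of $\Lambda$ precisely when $s_v$ lies in $\Omega_K \cdot t$, which by construction never happens. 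Therefore $\co(f)_t = O_{K(t)}$, contradicting the hypothesis and completing the proof.

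The main technical point to pin down carefully is the identification of $s_v$ with a generator of the inertia image $h(I_v)$; this is part of McCulloh's local description of the prime $F$-elements attached to tame local $G$-extensions, and it is the only ingredient beyond the formal orbit-stability argument that the proof really uses.
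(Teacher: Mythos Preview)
Your proof is correct and is essentially the same argument the paper sketches (following McCulloh \cite[Theorem 6.7]{Mc}): both rest on the facts that each inertia generator $s_v$ lies in $h(\Omega_K)$ and that any subgroup of $G$ is a union of $\Omega_K$-orbits in $G(-1)$. The only difference is packaging---the paper phrases the obstruction as the existence of a nontrivial unramified Galois subalgebra of $K_h$, while you go straight to the contrapositive and exhibit the orbit representative $t \in T'$ with $\co(f)_t = O_{K(t)}$.
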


\begin{proof} See \cite[proof of Theorem 6.7(a)--(b)]{Mc}. The
  essential idea is as follows. One first shows that if $K_h$ is not a
  field, then it contains a Galois subalgebra extension $H/K$ with $K
  \neq H$ and $H/K$ unramified. One then establishes that, on the other
  hand, if $\co(f)_t \neq O_{K(t)}$ for all $t \in T'$, then every
  Galois subalgebra extension $H/K$ of $K_h$ with $H \neq K$ is in
  fact ramified. Hence, if $\co(f)_t \neq O_{K(t)}$ for all $t \in
  T'$, then it follows that $K_h$ must be a field. 
\end{proof}

For each $c \in \cR(O_KG)$, and each real number $X>0$, write
$N_{\cW}^{f}(c,X;\cM)$ for the number of tame Galois $G$-extensions
$K_h/K$ for which $(O_h)=c$, $D_{\cW}(K_h/K)$ is coprime to $\cM$,
$D_{\cW}(K_h/K) \leq X$, and $K_h$ is a field.

\begin{proposition} \label{P:allsame}
Suppose that $\cW$ is constant on $T'$. Then, for each $c \in
\cR(O_KG)$, we have
\begin{equation} \label{E:allsame}
N_{\cW}^{f}(c,X;\cM) \sim N_{\cW}(c,X;\cM)
\end{equation}
as $X \to \infty$.
\end{proposition}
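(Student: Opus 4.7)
The plan is to sandwich $N_{\cW}^{f}(c,X;\cM)$ between two counts having the same asymptotic, then apply Proposition \ref{P:kappafull}.

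First I would recall the bookkeeping of Section \ref{S:count}: by Remark \ref{R:newleon}(1) and the derivation of \eqref{E:countfunc}, for fixed $c \in \cR(O_KG)$ the map $h \mapsto f$ sending each $h \in H^{1}_{tr}(K,G)$ with $(O_h) = c$ to its unique associated prime $F$-element gives an $|\Ker(\psi)|$-to-one correspondence onto $\bF \cap \lambda_{c}\cP_{\theta}$, and partitioning $\lambda_c\cP_\theta$ into cosets of $\cP_\cM$ yields
\[
N_{\cW}(c,X;\cM) = |\Ker(\psi)| \cdot \sum_{\fc \in f_{\cM}^{-1}(c)} \kappa_{\cW}(\fc, X;\cM).
\]
The upper bound $N_{\cW}^{f}(c,X;\cM) \leq N_{\cW}(c,X;\cM)$ is then immediate, since every Galois field extension is in particular a Galois algebra extension.

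Next I would establish the matching lower bound using Proposition \ref{P:leonfield}. That proposition asserts that if the associated $f \in \bF$ satisfies $\co(f)_t \neq O_{K(t)}$ for every $t \in T'$, then $K_h$ is a field. Running exactly the same correspondence as above, but restricting to those $f$ with $\co(f)_t \neq O_{K(t)}$ for all $t \in T'$, counts a subset of the field extensions contributing to $N_{\cW}^{f}(c,X;\cM)$; the count is, by Definition \ref{E:kappat},
\[
|\Ker(\psi)| \cdot \sum_{\fc \in f_{\cM}^{-1}(c)} \kappa_{\cW}^{\full}(\fc, X;\cM) \leq N_{\cW}^{f}(c,X;\cM).
\]

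Finally, the hypothesis that $\cW$ is constant on $T'$ lets me invoke Proposition \ref{P:kappafull}, which gives $\kappa_{\cW}^{\full}(\fc,X;\cM) \sim \kappa_{\cW}(\fc,X;\cM)$ as $X \to \infty$ for each individual coset $\fc$. Since $f_{\cM}^{-1}(c)$ is a finite set (Proposition \ref{P:finiteness}) and all terms are non-negative, the equivalence persists after summing. The squeeze between the two bracketing quantities then yields \eqref{E:allsame}.

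Nothing here is genuinely hard: the upper bound is trivial, the lower bound is an immediate application of Proposition \ref{P:leonfield}, and the limit step is Proposition \ref{P:kappafull}. The one place to be mildly careful is in the lower bound step, where I must make sure that the correspondence $h \leftrightarrow f$ really does descend to a correspondence between $h$ with $K_h$ a field satisfying $(O_h)=c$ and $f$ with $\co(f)_t \neq O_{K(t)}$ for all $t \in T'$ satisfying the coset constraint; this is straightforward once one observes that the map respects the cosets $\fc$ and the coprimality to $\cM$, both of which are properties of $\co(f)$ alone.
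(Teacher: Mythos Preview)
Your proof is correct and follows essentially the same sandwich argument as the paper: the trivial upper bound, the lower bound via Proposition~\ref{P:leonfield}, and the squeeze using Proposition~\ref{P:kappafull}. The only cosmetic difference is that the paper first invokes Theorem~\ref{T:main} (using asymptotic independence of $\kappa_{\cW}(\fc,X;\cM)$) to collapse the sum over $f_{\cM}^{-1}(c)$ to $|\Ker(f_{\cM})|$ times a single $\kappa_{\cW}(\fc,X;\cM)$, whereas you work directly with the finite sum from \eqref{E:countfunc} and apply Proposition~\ref{P:kappafull} termwise; your route is arguably slightly cleaner since it avoids the detour through Theorem~\ref{T:main}.
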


\begin{proof}
If $\cW$ is constant on $T'$, then $\kappa_{\cW}(\fc,X;\cM)$ is
asymptotically independent of $\fc$. Hence we have that (see Theorem
\ref{T:main}) 
\begin{equation} \label{E:alcount}
N_{\cW}(c,X;\cM) = |\Ker(\psi)| \cdot |\Ker(f_{\cM})| \cdot
\kappa_{\cW}(\fc,X;\cM)
\end{equation}
for any $\fc \in J(K\Lambda)/\cP_{\cM}$.

Proposition \ref{P:leonfield} implies that
\begin{equation} \label{E:fcount}
|\Ker(\psi)| \cdot |\Ker(f_{\cM})| \cdot
\kappa_{\cW}^{\full}(\fc,X;\cM) \leq N_{\cW}^{f}(c,X;\cM) \leq
N_{\cW}(c,X;\cM).
\end{equation}
Now from \eqref{E:alcount}, we see that
\begin{equation*}
\lim_{X \to \infty} \frac{|\Ker(\psi)| \cdot |\Ker(f_{\cM})| \cdot
\kappa_{\cW}^{\full}(\fc,X;\cM)}{N_{\cW}(c,X;\cM)} =
\lim_{X \to \infty}
\frac{\kappa_{\cW}^{\full}(\fc,X;\cM)}{\kappa_{\cW}(\fc,X;\cM)} =1.
\end{equation*}
where the second equality follows from Proposition
\ref{P:kappafull}. Hence \eqref{E:fcount} implies that
\begin{equation*}
\lim_{X \to \infty} \frac{N_{\cW}^{f}(c,X;\cM)}{N_{\cW}(c,X;\cM)} = 1
\end{equation*}
also.
\end{proof}

%%%%%%%%%%%%%%%%%%%%%%%%%%%%%%%%%%%%%%%%%%%%%%%%%%%%%%%%%%%%%%%%%%%%%%%%%%%%%%%%%%%%%%%%%%%%%%%%%%%%%%

\section{Futher questions} \label{S:sigh}

Theorem \ref{T:main} implies that if $\kappa_{\cW}(\fc,X;\cM)$ is
asymptotically independent of $\fc$, then the second part of Question
\ref{Q:general} has an affirmative answer, i.e. the limit
\begin{equation*}
Z_{\cW}(c;\cM):= \lim_{X \to \infty}
\frac{N_{\cW}(c,X;\cM)}{M_{\cW}(X)}
\end{equation*}
is independent of $c \in \cR(O_KG)$. What happens if the assumption
that $\kappa_{\cW}(\fc,X;\cM)$ is a asymptotically independent of
$\fc$ is dropped? We see from \eqref{E:countfunc} that if $c_1, c_2
\in \cR(O_KG)$, then
\begin{equation*}
N_{\cW}(c_1,X;\cM) \sim N_{\cW}(c_2,X;\cM)
\end{equation*}
as $X \to \infty$ if and only if
\begin{equation} \label{E:noway}
\sum_{\fc \in f_{\cM}^{-1}(c_1)} \kappa_{\cW}(\fc,X;\cM) \sim 
\sum_{\fc \in f_{\cM}^{-1}(c_2)} \kappa_{\cW}(\fc,X;\cM)
\end{equation}
as $X \to \infty$.

If $\kappa_{\cW}(\fc,X;\cM)$ is not asymptotically independent of
$\fc$, then it seems unreasonable to expct \eqref{E:noway} to hold
for all $c_1,c_2 \in \cR(O_KG)$. In this case, it is therefore
probably no longer true in general that $Z_{\cW}(c;\cM)$ is
independent of $c$, and one would expect the behaviour of $Z_{\cW}(c;\cM)$
with respect to $c$ to depend very much upon the choice of $\cW$. At
present we have no results or examples in this situation. In order to
produce an explicit example in which $Z_{\cW}(c;\cM)$ depends upon
$c$, there are two main issues that need to be addressed.

Suppose that $G$ is a finite abelian group which is such that
$\cR(O_KG) \neq 0$. (It is possible to produce such examples for many
different $K$ using work of Brinkhuis \cite{Br}.) One would first
have to show that, in the example under consideration,
$\kappa_{\cW}(\fc,X;\cM)$ is not independent of $\fc$. This can be
done in many cases by appealing to Proposition \ref{P:kappafail} above.
One would then have to show that \eqref{E:noway} fails for some $c_1,
c_2 \in \cR(O_KG)$. The point here is that this is not directly
implied by $\kappa_{\cW}(\fc,X;\cM)$ being asymptotically dependent
upon $\fc$: one has to rule out the (admittedly unlikely) possibility of
the $\kappa_{\cW}(\fc,X;\cM)$ varying with $\fc$ in such a way that
\eqref{E:noway} always holds.

One possible approach towards dealing with these issues would be
to try and work with $L$-functions constructed directly from
$J(K\Lambda)/\cP_{\theta}$ directly, avoiding the use of the group
$\Cl_{\cM}'(\Lambda)$ entirely (cf. \cite{BR}, for example). An
additional advantage of such an approach is that it would also
presumably allow us to consider $G$-extensions $K_h/K$ in which the
places dividing $|G|$ are allowed to be tamely ramified.
Finally, we remark also that it should be possible to use the methods of this
paper to investigate similar questions in the setting of function
fields (see \cite{AB}). We hope to return to these topics in future
work.

%\newpage

\end{document}